\documentclass[11pt]{amsart} 

\usepackage{amssymb,cite,hyperref}
\usepackage[margin=1in]{geometry}
\usepackage{tensor,enumitem,verbatim}
\usepackage[all,cmtip]{xy}
\usepackage{todonotes}

\newcommand{\cbinom}[2]{\genfrac{\{}{\}}{0pt}{}{#1}{#2}}

\newcommand{\subgrp}[1]{\langle #1 \rangle}
\newcommand{\set}[1]{\left\{ #1 \right\}}
\newcommand{\abs}[1]{\left| #1 \right|}
\newcommand{\bs}[1]{\boldsymbol{#1}}
\newcommand{\wh}[1]{\widehat{ #1}}
\newcommand{\wt}[1]{\widetilde{ #1}}
\newcommand{\ul}[1]{\underline{#1}}
\newcommand{\ol}[1]{\overline{#1}}

\newcommand{\ev}{\textup{ev}}
\newcommand{\odd}{\textup{odd}}
\newcommand{\ve}{\varepsilon}
\newcommand{\ualpha}{\ul{\alpha}}

\newcommand{\sslash}{/\!\!/}

\DeclareMathOperator{\Aut}{Aut}

\DeclareMathOperator{\Ext}{Ext}

\DeclareMathOperator{\opH}{H}
\DeclareMathOperator{\Hom}{Hom}
\DeclareMathOperator{\id}{id}
\DeclareMathOperator{\im}{im}

\DeclareMathOperator{\Mat}{Mat}
\DeclareMathOperator{\Max}{Max}

\DeclareMathOperator{\soc}{soc}

\newcommand{\bfAut}{\mathbf{Aut}}
\newcommand{\bfHom}{\mathbf{Hom}}

\newcommand{\gotimes}{\tensor[^g]{\otimes}{}}

\newcommand{\Hbul}{\opH^\bullet}

\newcommand{\G}{\mathbb{G}}
\newcommand{\M}{\mathbb{M}}
\newcommand{\N}{\mathbb{N}}
\renewcommand{\P}{\mathbb{P}}
\newcommand{\Z}{\mathbb{Z}}

\newcommand{\Ga}{\G_a}
\newcommand{\Gam}{\Ga^-}
\newcommand{\Gar}{\G_{a(r)}}
\newcommand{\Gaone}{\G_{a(1)}}

\newcommand{\calI}{\mathcal{I}}
\newcommand{\calN}{\mathcal{N}}

\newcommand{\fm}{\mathfrak{m}}

\newcommand{\zero}{\ol{0}}
\newcommand{\one}{\ol{1}}
\newcommand{\Azero}{A_{\zero}}

\newcommand{\Vone}{V_{\one}}
\newcommand{\Vzero}{V_{\zero}}

\newcommand{\alg}{\mathfrak{alg}}
\newcommand{\calg}{\mathfrak{calg}}
\newcommand{\csalg}{\mathfrak{csalg}}
\newcommand{\salg}{\mathfrak{salg}}

\newcommand{\fsmod}{\mathfrak{smod}}

\newcommand{\Matmn}{\Mat_{m|n}}

\newcommand{\Mones}{\M_{1;s}}
\newcommand{\Mr}{\M_r}
\newcommand{\Mone}{\M_1}

\newcommand{\Moneone}{\M_{1;1}}
\newcommand{\Mrone}{\M_{r;1}}
\newcommand{\Mrf}{\M_{r;f}}
\newcommand{\Mrfeta}{\M_{r;f,\eta}}
\newcommand{\Mrseta}{\M_{r;s,\eta}}
\newcommand{\Mrs}{\M_{r;s}}
\newcommand{\Mrt}{\M_{r;t}}
\newcommand{\Nr}{\calN_r}

\newcommand{\NrG}{\calN_r(G)}
\newcommand{\None}{\calN_1}
\newcommand{\NoneG}{\None(G)}
\newcommand{\Nrs}{N_{r;s}}
\newcommand{\Nrone}{N_{r;1}}
\renewcommand{\Pr}{\P_r}
\newcommand{\Pone}{\P_1}

\newcommand{\bsN}{\bs{\mathcal{N}}}
\newcommand{\bsNr}{\bsN_r}
\newcommand{\bsVr}{\bsV_r}
\newcommand{\bsVrs}{\bsV_{r;s}}

\newcommand{\Amn}{A^{m|n}}

\newcommand{\GLmn}{GL_{m|n}}
\newcommand{\GLmnr}{GL_{m|n(r)}}

\newcommand{\bsa}{\bs{a}}
\newcommand{\bsmu}{\bs{\mu}}
\newcommand{\bsb}{\bs{b}}
\newcommand{\bsF}{\bs{F}}
\newcommand{\bsV}{\bs{V}}

\numberwithin{equation}{subsection}

\newtheorem{theorem}{Theorem}[subsection]
\newtheorem{proposition}[theorem]{Proposition}
\newtheorem{corollary}[theorem]{Corollary}
\newtheorem{lemma}[theorem]{Lemma}
\newtheorem{question}[theorem]{Question}
\newtheorem{conjecture}[theorem]{Conjecture}

\theoremstyle{definition}
\newtheorem{definition}[theorem]{Definition}

\newtheorem{remark}[theorem]{Remark}

\title[Spectrum and support for infinitesimal unipotent supergroup schemes]{On the cohomological spectrum and support varieties for infinitesimal unipotent supergroup schemes}

\author{Christopher M.\ Drupieski}
\address{Department of Mathematical Sciences,
		DePaul University,
		Chicago, IL 60614, USA}
\email{c.drupieski@depaul.edu}

\author{Jonathan R. Kujawa}
\address{Department of Mathematics \\
		University of Oklahoma \\
		Norman, OK 73019, USA}
\email{kujawa@math.ou.edu}

\thanks{The first author was supported in part by a Simons Collaboration Grant for Mathematicians, and by NSF Grant No.\ DMS-1440140 while he was in residence at the Mathematical Sciences Research Institute in Berkeley, CA, during the Spring 2018 semester. The second author was supported in part by NSA grant
H98230-16-0055 and in part by a Simons Collaboration Grant for Mathematicians.}


\subjclass[2010]{Primary 20G10. Secondary 17B56.}

\allowdisplaybreaks

\linespread{1.1}

\begin{document}

\begin{abstract}
We show that if $G$ is an infinitesimal elementary supergroup scheme of height $\leq r$, then the cohomological spectrum $\abs{G}$ of $G$ is naturally homeomorphic to the variety $\NrG$ of supergroup homo\-morphisms $\rho: \Mr \rightarrow G$ from a certain (non-algebraic) affine supergroup scheme $\Mr$ into $G$. In the case $r=1$, we further identify the cohomological support variety of a finite-dimensional $G$-supermodule $M$ as a subset of $\NoneG$. We then discuss how our methods, when combined with recently-announced results by Benson, Iyengar, Krause, and Pevtsova, can be applied to extend the homeomorphism $\NrG \cong \abs{G}$ to arbitrary infinitesimal unipotent supergroup schemes.
\end{abstract}

\maketitle

\section{Introduction}

\subsection{Overview}

Let $k$ be a field of positive characteristic $p$. For more than thirty years, support varieties have played a prominent role in relating the $p$-modular representation theory of a finite group $G$ (and related algebraic structures) to the ambient geometry encoded by the spectrum $\abs{G}$ of the cohomology ring $\Hbul(G,k)$. Unipotent subgroups have frequently been important intermediate actors in this relationship. For example, Quillen \cite{Quillen:1971} showed that the cohomological spectrum of a finite group is stratified by pieces coming from its elementary abelian $p$-subgroups. Later, building on work by Friedlander and Parshall \cite{Friedlander:1987,Friedlander:1986b} for finite-dimensional restricted Lie algebras, Suslin, Friedlander, and Bendel (SFB) \cite{Suslin:1997,Suslin:1997a} showed that if $G$ is an infinitesimal $k$-group scheme of height $\leq r$, then $\abs{G}$ is homeomorphic to the variety of infinitesimal one parameter subgroups $\nu: \Gar \rightarrow G$, i.e., the variety of homomorphisms into $G$ from the $r$-th Frobenius kernel of the additive group scheme $\Ga$. More generally, Friedlander and Pevtsova \cite{Friedlander:2007} related the cohomological spectrum of an arbitrary finite $k$-group scheme $G$ to the space $\Pi(G)$ of $\pi$-points of $G$, which consists of equivalence classes of flat $K$-algebra maps $\alpha_K: K[t]/(t^p) \rightarrow KG$ (for $K$ a field extension of $k$) that factor through the group algebra of an abelian unipotent subgroup scheme of $G_K$.

Now suppose $p \geq 3$. This paper is a continuation of our work in \cite{Drupieski:2017a} investigating the cohomology of finite $k$-supergroup schemes. Recall that an affine $k$-supergroup scheme is a representable functor from the category $\csalg_k$ of commutative $k$-superalgebras to groups. An affine $k$-supergroup scheme $G$ is finite if its representing (Hopf) superalgebra $k[G]$ is finite-dimensional, and is infinitesimal if it is finite and if the augmentation ideal of $k[G]$ is nilpotent. In \cite{Drupieski:2017a} we introduced a family $\Mrfeta$ of infinitesimal supergroup schemes, which we called multiparameter supergroups, that are parametrized by an integer $r \geq 1$, a $p$-polynomial $0 \neq f \in k[T]$ without constant term, and a scalar $\eta \in k$. The definitions of these multiparameter supergroups are recalled in Section \ref{subsection:conventions}. If $f = T^{p^s}$ is a single monomial (and if $r \geq 2$ when $\eta \neq 0$), then $\Mrseta := \M_{r;T^{p^s},\eta}$ is unipotent, but in general the group algebra $k\Mrfeta$ has a nontrivial semisimple subalgebra. Following Suslin, Friedlander, and Bendel \cite{Suslin:1997}, in \cite{Drupieski:2017a} we defined characteristic extension classes for the general linear supergroup $\GLmn$, and calculated how the classes restricted along homomorphisms $\rho: \Mrfeta \rightarrow \GLmn$. We then applied our calculations to describe (for $k$ algebraically closed) the maximal ideal spectrum $\abs{\GLmnr}$ of the cohomology ring for the $r$-th Frobenius kernel of $\GLmn$. Roughly speaking (and modulo a finite morphism), we showed that $\abs{\GLmnr}$ is equal to
	\[
	\bigcup_{f,\eta} \bfHom(\Mrfeta,\GLmnr)(k),
	\]
where $\bfHom(\Mrfeta,\GLmnr)(k)$ denotes the variety of homomorphisms $\phi: \Mrfeta \rightarrow \GLmnr$.

In the present work we focus our attention on the \emph{unipotent} multiparameter supergroups, or what is more-or-less the same, the infinitesimal objects in the family of \emph{elementary} finite super\-group schemes, as recently classified (for $k$ perfect) by Benson, Iyengar, Krause, and Pevtsova (BIKP). The only elementary supergroup schemes that are not enumerated among our multiparameter supergroups are $\Gar$ for $r \geq 1$, and the odd additive group scheme $\Gam$, though one has $\M_{r;1} := \M_{r;T^p,0} = \Gar \times \Gam$. As we point out in Lemma \ref{lemma:Mrquotient}, the (height-$r$) infinitesimal elementary supergroup schemes are all quotients of a common supergroup scheme, $\Mr$, which we initially defined in \cite[\S3.1]{Drupieski:2017a}. The supergroup $\Mr$ then plays a central role in the rest of the paper.

One of our main results, presented in Corollary \ref{corollary:psirelementary}, is that if $k$ is algebraically closed, and if $G$ is a height-$r$ infinitesimal elementary supergroup scheme, then the cohomological spectrum $\abs{G}$ of $G$ is naturally homeomorphic to the variety
	\[
	\NrG := \bfHom(\Mr,G)(k) := \Hom_{Grp/k}(\Mr,G)
	\]
of all $k$-supergroup scheme homomorphisms $\rho: \Mr \rightarrow G$. For $r=1$ (and $G$ still infinitesimal elementary) we further identify, in Theorem \ref{theorem:supportheightone}, the support variety $\abs{G}_M$ of a finite-dimensional $G$-supermodule $M$ with the set
	\begin{align*}
	\NoneG_M :&= \set{ \phi \in \NoneG : \id_{\Mone}(\phi^* M) = \infty} \\
	&= \set{ \phi \in \NoneG : \Ext_{\Mone}^i(M,M) \neq 0 \text{ for infinitely many $i \geq 1$}}.
	\end{align*}
Here $\phi^* M$ denotes the pullback of $M$ along the homomorphism $\phi: \Mone \rightarrow G$, and $\id_{\Mone}(\phi^* M)$ denotes the injective dimension of $\phi^* M$ in the category of rational $\Mone$-supermodules. Our definition of the support set $\NoneG_M$ is inspired by similar definitions appearing in the literature in the context of commutative local rings (cf.\ \cite{Avramov:1989,Avramov:2000,Jorgensen:2002}), and which were brought to our attention by way of a talk by Srikanth Iyengar at the Conference on Groups, Representations, and Cohomology, held at Sabal M\`{o}r Ostaig, Isle of Skye, Scotland, in June 2015. At present we do not understand how the existing support theory for local commutative rings relates to the rational cohomology of $\Mone$, but an obvious intermediate actor is the (infinite-dimensional) group algebra $k\Mone := k[\Mone]^\#$ and its `polynomial subalgebra' $\Pone$. We expect that a better understanding of the relationship will help answer, for example, the question of whether or not the support sets of Section \ref{subsection:supportset} are Zariski closed subsets of $\NrG$.

At the end of the paper we discuss how our methods can be extended to arbitrary infinitesimal unipotent supergroup schemes when they are combined with recently-announced results by Benson, Iyengar, Krause, and Pevtsova concerning the detection of nilpotent elements in cohomology. The first anticipated application, which we state as Conjecture \ref{conjecture:spectrum}, is that for an arbitrary infinitesimal unipotent supergroup scheme $G$ of height $\leq r$, there exists a natural homeomorphism
	\[
	\abs{G} \cong \NrG = \Hom_{Grp/k}(\Mr,G).
	\]
This suggests that, at least for infinitesimal unipotent supergroups, $\NrG$ is the correct analogue of SFB's variety of infinitesimal one-parameter subgroups. We state this result as a conjecture rather than as a theorem only because, at the time of writing this article, the BIKP detection theorem has only been announced as a preliminary result, and has not yet appeared in preprint form; otherwise we provide all of the details to justify the conjecture. The second anticipated application, stated as Conjecture \ref{conjecture:supportheightone}, is an extension of Theorem \ref{theorem:supportheightone} to arbitrary height-$1$ infinitesimal unipotent supergroup schemes. To justify Conjecture \ref{conjecture:supportheightone} we must assume that the support set $\NoneG_M$ is a Zariski closed subset of $\NoneG$, and we must also assume a technical condition concerning the ideal of functions defining $\NoneG_M$ as a subset of $\NoneG$.

We do not make any particular speculations concerning support varieties for infinitesimal unipotent supergroups of height greater than one, though we expect that it should be possible to use methods like those employed by Suslin, Friedlander, and Bendel \cite{Suslin:1997a} to bootstrap up from height-$1$ unipotent supergroups to arbitrary infinitesimal unipotent supergroups. For example, for $r \geq 2$ and $0 \neq \eta \in k$, there are superalgebra (though not Hopf superalgebra) isomorphisms
	\[
	k\Mrseta \cong k\M_{r-1;s+1} \quad \text{and} \quad	k\Mrs \cong (k\Gaone)^{\otimes (r-1)} \otimes k\Mones,
	\]
which could enable one, as in \cite[Proposition 6.5]{Suslin:1997a}, to reduce support variety calculations for arbitrary infinitesimal elementary supergroup schemes to calculations for $\Gaone$ and $\Mones$. More generally, if $\Mrfeta$ is an arbitrary multiparameter supergroup and if $A = k\Mrseta$, then there exists a semisimple Hopf subalgebra $B \subset A$ such that the Hopf quotient $A\sslash B$ is isomorphic as a superalgebra to $k\M_{r';s'}$ for some integers $r',s' \geq 0$. Thus, one could expect to reduce support variety calculations for arbitrary multiparameter supergroups to the unipotent case.

\subsection{Organization of the paper}

Section \ref{section:homomorphisms} is devoted to foundational results concerning the set $\NrG$ of supergroup scheme homomorphisms $\rho: \Mr \rightarrow G$. In Section \ref{subsection:Homsets} we significantly extend the calculations of \cite[Lemma 3.3.2]{Drupieski:2017a} by computing for each purely even reduced commutative $k$-algebra $A$ and for each infinitesimal elementary supergroup scheme $G$ the set
	\[
	\bfHom(\Mr,G)(A) = \Hom_{Grp/A}(\Mr \otimes_k A,G \otimes_k A)
	\]
of all $A$-supergroup scheme homomorphisms $\rho: \Mr \otimes_k A \rightarrow G \otimes_k A$. In Section \ref{subsection:nilpotentsupermatrices} we show that if the field $k$ is algebraically closed, and if $G$ is an algebraic $k$-supergroup scheme, then the set $\NrG$ admits the structure of an affine algebraic variety. In particular, $\Nr(\GLmn)$ may be identified with a certain variety of commuting nilpotent supermatrices. Then following SFB, in Section \ref{subsection:universalhomomorphism} we define for each algebraic $k$-supergroup scheme $G$ the universal supergroup homomorphism from $\Mr$ to $G$, and use it to define a homomorphism of graded $k$-algebras $\psi_r: H(G,k) \rightarrow k[\NrG]$.

In Section \ref{section:cohomologyandsupport} we investigate the map of varieties $\Psi: \NrG \rightarrow \abs{G}$ induced by the homomorphism $\psi_r$. In Section \ref{subsection:inducedmaps} we show that $\Psi$ is a homeomorphism if $G$ is a height-$r$ infinitesimal elementary supergroup scheme. Then after making some observations in Section \ref{subsection:LHS} arising from various Lyndon--Hochschild--Serre spectral sequences, in Section \ref{subsection:supportset} we define the support sets $\NrG_M$, and in Section \ref{subsection:supportheightone} we show that $\NoneG_M$ identifies with the cohomological support variety $\abs{G}_M$ when $G$ is a height-one infinitesimal elementary supergroup scheme. Finally, in Section \ref{subsection:BIKPapplications} we discuss applications of the detection theorem recently announced by BIKP.

\subsection{Conventions} \label{subsection:conventions}

We generally follow the conventions of our previous work \cite{Drupieski:2016a,Drupieski:2017a}, to which we refer the reader for any unexplained terminology or notation. For additional standard terminology, notation, and conventions, the reader may consult Jantzen's book \cite{Jantzen:2003}.

Except when indicated otherwise, $k$ will denote a field of characteristic $p \geq 3$, $r$ will denote a positive integer, and $\N = \set{0,1,2,3,\ldots}$ will denote the set of nonnegative integers. All vector spaces will be $k$-vector spaces, and all unadorned tensor products will denote tensor products over $k$. Given a $k$-vector space $V$, let $V^\#$ be its $k$-linear dual $\Hom_k(V,k)$. Set $\Z_2 = \Z/2\Z = \set{\zero,\one}$. Following the literature, we use the prefix `super' to indicate that an object is $\Z_2$-graded. We denote the decomposition of a vector superspace (superalgebra, etc.) $V$ into its $\Z_2$-graded homogeneous components by $V = \Vzero \oplus \Vone$, calling $\Vzero$ the even subspace of $V$ and $\Vone$ the odd subspace of $V$, and write $\ol{v} \in \Z_2$ to denote the $\Z_2$-degree of a homogeneous element $v \in V$. We use the symbol $\cong$ to denote even (i.e., degree-preserving) isomorphisms of superspaces, and reserve the symbol $\simeq$ for odd (i.e., degree-reversing) isomorphisms of superspaces.

For the reader's convenience we recall here the definitions of the multiparameter supergroups introduced in \cite{Drupieski:2017a}. First, $\Mr$ is the affine $k$-supergroup scheme whose coordinate superalgebra $k[\Mr]$ is the commutative $k$-super\-algebra generated by the odd element $\tau$ and the even elements $\theta$ and $\sigma_i$ for $i \in \N$, such that $\tau^2 = 0$, $\sigma_0 = 1$, $\theta^{p^{r-1}} = \sigma_1$, and $\sigma_i \sigma_j = \binom{i+j}{i} \sigma_{i+j}$ for $i,j \in \N$:
\[ \textstyle
k[\Mr] = k[\tau,\theta,\sigma_1,\sigma_2,\ldots]/\subgrp{ \tau^2=0, \theta^{p^{r-1}} = \sigma_1, \text{ and } \sigma_i\sigma_j = \binom{i+j}{i}\sigma_{i+j} \text{ for $i,j \in \N$}}.
\]
The coproduct $\Delta$ and the antipode $S$ on $k[\Mr]$ are given by the formulas
	\begin{align*}
	\Delta(\tau) &= \tau \otimes 1 + 1 \otimes \tau, & S(\tau) &= -\tau, \\
	\Delta(\theta) &= \theta \otimes 1 + 1 \otimes \theta, & S(\theta) &= -\theta, \\
	\Delta(\sigma_i) &= \textstyle \sum_{u+v=i} \sigma_u \otimes \sigma_v + \sum_{u+v+p=i} \sigma_u \tau \otimes \sigma_v \tau, & S(\sigma_i) &= (-1)^i \sigma_i.
	\end{align*}
The coproduct on $k[\Mr]$ induces a superalgebra structure on its $k$-linear dual $k\Mr := k[\Mr]^\#$. With this structure, we call $k\Mr$ the \emph{group algebra} of $\Mr$. By \cite[Proposition 3.1.4]{Drupieski:2017a}, the group algebra $k\Mr$ is given by
	\begin{equation} \label{eq:kMr}
	k\Mr = k[[u_0,\ldots,u_{r-1},v]]/\subgrp{u_0^p,\ldots,u_{r-2}^p,u_{r-1}^p + v^2},
	\end{equation}
where the superdegrees of elements are given by $\ol{u_i} = \zero$ for each $i$ and $\ol{v} = \one$.

Let $\Pr$ be the `polynomial subalgebra' of $k\Mr$,
	\begin{equation} \label{eq:kMrpol}
	\Pr = k[u_0,\ldots,u_{r-1},v]/\subgrp{u_0^p,\ldots,u_{r-2}^p,u_{r-1}^p + v^2}.
	\end{equation}
By \cite[Remark 3.1.3(3)]{Drupieski:2017a}, the $\Z_2$-grading on $k[\Mr]$ lifts to a $\Z$-grading, which makes $k[\Mr]$ into a graded Hopf algebra of finite type in the sense of Milnor and Moore \cite{Milnor:1965}. Then $\Pr$ is the graded dual of $k[\Mr]$, and hence $\Pr$ inherits by duality a Hopf superalgebra structure \cite[Proposition 4.8]{Milnor:1965}. The elements $u_0$, $u_{r-1}^p$, and $v$ are each primitive with respect to the coproduct on $\Pr$. Then given a nonzero $p$-polynomial $f \in k[T]$ without constant term, and given a scalar $\eta \in k$, the element $f(u_{r-1})+\eta \cdot u_0$ is primitive in $\Pr$, and hence the quotient
	\[
	k\Mrfeta := \Pr/\subgrp{f(u_{r-1})+\eta \cdot u_0}
	\]
is a finite-dimensional (super)cocommutative Hopf superalgebra. Now the multiparameter supergroup $\Mrfeta$ is the affine $k$-supergroup scheme such that $k[\Mrfeta]^\# = k\Mrfeta$. Set $\Mrf = \M_{r;f,0}$, and given an integer $s \geq 1$ set $\Mrseta = \M_{r;T^{p^s},\eta}$ and $\Mrs = \M_{r;T^{p^s},0}$. Then $k[\Mrs]$ is the Hopf subsuperalgebra of $k[\Mr]$ generated by $\tau$, $\theta$, and $\sigma_i$ for $1 \leq i < p^s$.

\subsection{Acknowledgements} \label{SS:Acknowledgements}

The authors are pleased to thank David Benson, Srikanth Iyengar, and Julia Pevtsova for enlightening conversations that helped contribute to this work. In particular, the authors thank Julia Pevtsova for explaining the BIKP detection theorem (Theorem \ref{theorem:BIKP}), and they thank Srikanth Iyengar for generously sharing a preprint copy of his work with Luchezar Avramov on restricting homology to hypersurfaces. The first author thanks the organizers of the Southern Regional Algebra Conference, held March 17--19, 2017, at the University of South Alabama, for giving him the opportunity to speak on the work in \cite{Drupieski:2017a}.

\section{Varieties of homomorphisms} \label{section:homomorphisms}

\subsection{Elementary supergroup schemes}

The following definition (stated using slightly different though equivalent terminology) is due to Benson, Iyengar, Krause, and Pevtsova (BIKP).

\begin{definition}[Elementary supergroup schemes]
A finite supergroup scheme is \emph{elementary} if it is isomorphic to a quotient of $\Mrs \times (\Z/p\Z)^{\times t}$ for some integers $r,s,t \geq 1$.
\end{definition}

In the preceding definition, $\Z/p\Z$ denotes the finite supergroup scheme whose group algebra is $k(\Z/p\Z)$, the ordinary (purely even) group algebra over $k$ of the finite cyclic group $\Z/p\Z$. The next theorem was announced (using slightly different terminology) by Julia Pevtsova at the Mathematical Congress of the Americas in Montreal in July 2017, in the talk she gave in the Special Session on Cohomology of Groups. By convention, if $r=0$ then $\Gar$ is the trivial group.

\begin{theorem}[BIKP] \label{theorem:elementaryclassification}
Suppose $k$ is perfect. Then each elementary finite $k$-supergroup scheme is isomorphic to one of the following (unipotent) $k$-supergroup schemes:
	\begin{itemize}
	\item $\Gar \times (\Z/p\Z)^{\times t}$ for some integers $r,t \geq 0$;
	\item $\Gar \times \Gam \times (\Z/p\Z)^{\times t}$ for some integers $r,t \geq 0$;
	\item $\Mrs \times (\Z/p\Z)^{\times t}$ for some integers $r,s \geq 1$ and $t \geq 0$; or
	\item $\Mrseta \times (\Z/p\Z)^{\times t}$ for some integers $r \geq 2$, $s \geq 1$, $t \geq 0$, and some $0 \neq \eta \in k$.
	\end{itemize}
\end{theorem}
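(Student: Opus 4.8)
The plan is to start from the definition of an elementary finite $k$-supergroup scheme as a quotient of $E := \Mrs \times (\Z/p\Z)^{\times t}$ and to identify all possible Hopf-superalgebra quotients of the group algebra $kE = k\Mrs \otimes k(\Z/p\Z)^{\otimes t}$, equivalently all Hopf sub-superalgebras of the coordinate superalgebra $k[E]$. First I would reduce to the connected (infinitesimal) and \'etale parts separately: since $k$ is perfect, any finite $k$-supergroup scheme $G$ fits in a split exact sequence with infinitesimal kernel $G^0$ and \'etale quotient $G_{red}$, and a quotient of $E$ inherits such a decomposition compatibly with the one on $E$ itself. The \'etale part of $E$ is $(\Z/p\Z)^{\times t}$ (since $\Mrs$ is infinitesimal), so the \'etale part of any quotient is an elementary abelian $p$-group, hence $(\Z/p\Z)^{\times t'}$ for some $t' \leq t$; this accounts for the $(\Z/p\Z)^{\times t}$ factor appearing in every item, and it remains to classify the infinitesimal quotients of $\Mrs$.

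Next I would pass to group algebras and work with the cocommutative Hopf superalgebra $k\Mrs$, which by the discussion in Section \ref{subsection:conventions} is the quotient $\Pr/\subgrp{u_{r-1}^{p^s}}$ of the polynomial algebra $\Pr = k[u_0,\dots,u_{r-1},v]/\subgrp{u_0^p,\dots,u_{r-2}^p,u_{r-1}^p+v^2}$, with $u_0$, $u_{r-1}^p$, and $v$ primitive. An infinitesimal quotient supergroup of $\Mrs$ corresponds to a Hopf sub-superalgebra $A \subseteq k\Mrs$. Because $k\Mrs$ is (super)cocommutative and local with nilpotent augmentation ideal, one can analyze $A$ through its associated graded with respect to the length filtration: the key invariants are the image of the primitives $P(A) \subseteq P(k\Mrs)$ and the induced $p$-operation (restricted Lie superalgebra structure) on them. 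The primitives of $k\Mrs$ form a low-dimensional restricted Lie superalgebra spanned by $u_0, u_1, \dots, u_{r-1}$ (more precisely their classes) together with $v$, with the only nontrivial relations being $v^{[2]} = u_{r-1}$ (up to sign/scalar coming from $u_{r-1}^p + v^2 = 0$) and the $p$-nilpotency of the $u_i$'s. A Hopf sub-superalgebra is then determined by a sub-restricted-Lie-superalgebra together with the "exponent" data recording how far up the $p$-power tower it reaches; running through the possibilities for where the odd primitive $v$ goes and how the chain $u_0, u_1, \dots$ truncates yields exactly the four families listed, the scalar $\eta$ entering when a primitive of the form $f(u_{r-1}) + \eta u_0$ is adjoined (cf. the construction of $k\Mrfeta$), and the constraint $r \geq 2$, $\eta \neq 0$ reflecting that $\eta u_0$ only survives when there is a genuine $u_0$ distinct from the bottom of the $u_{r-1}$-tower.

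I expect the main obstacle to be the bookkeeping in this last step: one must show that the only primitive elements available to generate a proper Hopf sub-superalgebra are, up to the $GL$-type changes of coordinates that preserve the Hopf structure, of the form $u_j$ or $f(u_{r-1}) + \eta u_0$, and that these generate precisely $k\M_{r';s'}$ or $k\M_{r';s',\eta}$ or $k\Gar$ or $k(\Gar \times \Gam)$ — in other words, that no exotic quotient arises. This requires a careful normal-form analysis of $p$-polynomial primitives and their $p$-th powers inside $\Pr$, using that $f(u_{r-1}) + \eta u_0$ is primitive exactly when $f$ is a $p$-polynomial, which is where the hypothesis $p \geq 3$ and the structure of $\Pr$ are used; and it requires checking that adjoining such an element, together with the forced lower primitives, reconstructs one of the named supergroups as a Hopf superalgebra and not merely as a superalgebra. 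The \'etale-connected splitting and the reduction to primitives are routine; the combinatorial enumeration of admissible primitive data is the heart of the argument.
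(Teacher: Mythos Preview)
The paper does not prove this theorem. Theorem \ref{theorem:elementaryclassification} is explicitly attributed to Benson, Iyengar, Krause, and Pevtsova, and the paper states that it ``was announced (using slightly different terminology) by Julia Pevtsova at the Mathematical Congress of the Americas in Montreal in July 2017.'' No argument for it appears anywhere in the paper; it is simply quoted as an external result, in the same spirit as the BIKP detection theorem (Theorem \ref{theorem:BIKP}) later on. So there is no ``paper's own proof'' to compare your proposal against.

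That said, a word on the proposal itself. Your overall strategy---splitting off the \'etale part over a perfect field and then classifying the infinitesimal quotients of $\Mrs$ via the Hopf-superalgebra structure---is the natural one. But there is a duality slip in your second paragraph: a quotient supergroup $\Mrs \twoheadrightarrow G$ corresponds to a Hopf \emph{quotient} $k\Mrs \twoheadrightarrow kG$ of the group algebra (equivalently, to a Hopf sub-superalgebra $k[G] \hookrightarrow k[\Mrs]$ of the coordinate ring), not to a Hopf sub-superalgebra of $k\Mrs$. You begin the first paragraph with the correct version (sub-superalgebras of $k[E]$) but then invert it when you pass to group algebras. Since quotients of $k\Mrs$ are governed by Hopf ideals, the analysis you want is of primitively-generated Hopf ideals in $\Pr/\subgrp{u_{r-1}^{p^s}}$, which is indeed what the construction of $k\Mrfeta$ in Section \ref{subsection:conventions} describes; your description of the primitives and the role of $f(u_{r-1})+\eta u_0$ is on target once the direction is fixed. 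The remaining work---showing that every Hopf ideal is generated by such a primitive up to the evident automorphisms, and that the resulting quotient is one of the listed supergroups---is where the perfectness of $k$ is genuinely used (to put $p$-polynomials into normal form), and your acknowledgment that this is ``the heart of the argument'' is accurate.
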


\begin{corollary} \label{corollary:infinitesimalelementary}
Suppose $k$ is perfect. Then each infinitesimal elementary $k$-supergroup scheme is isomorphic to one of the following infinitesimal (unipotent) $k$-supergroup schemes:
	\begin{itemize}
	\item $\Gar $ for some integer $r \geq 0$;
	\item $\Gar \times \Gam$ for some integer $r \geq 0$;
	\item $\Mrs$ for some integers $r,s \geq 1$; or
	\item $\Mrseta$ for some integers $r \geq 2$, $s \geq 1$, and some $0 \neq \eta \in k$.
	\end{itemize}	
\end{corollary}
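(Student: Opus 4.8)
The natural plan is to read the answer straight off Theorem~\ref{theorem:elementaryclassification} by deciding which members of its four families of elementary finite supergroup schemes are infinitesimal. Recall that a \emph{finite} $k$-supergroup scheme $G$ is infinitesimal precisely when its coordinate Hopf superalgebra $k[G]$ has nilpotent augmentation ideal, equivalently when $k[G]$ is local, equivalently when $k[G]$ has no nontrivial idempotents (the residue field is then forced to be $k$ by the counit, and the maximal ideal of a finite-dimensional local $k$-algebra is automatically nilpotent). Each of the four families in Theorem~\ref{theorem:elementaryclassification} has the shape $H\times(\Z/p\Z)^{\times t}$ with $t\geq 0$ and $H$ equal to $\Gar$, $\Gar\times\Gam$, $\Mrs$, or $\Mrseta$, so the first step is to dispose of the factor $(\Z/p\Z)^{\times t}$: since $\Z/p\Z$ is \'etale and nontrivial, $k[H\times(\Z/p\Z)^{\times t}]\cong k[H]\otimes k^{\times p^{t}}$ carries a nontrivial idempotent as soon as $t\geq 1$ (alternatively, for $t\geq 1$ the supergroup scheme $H\times(\Z/p\Z)^{\times t}$ admits the non-infinitesimal supergroup scheme $\Z/p\Z$ as a quotient, while quotients of infinitesimal supergroup schemes are infinitesimal). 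Hence an elementary finite $k$-supergroup scheme is infinitesimal only if $t=0$.

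It then remains to check that, with $t=0$, each of the four families consists of infinitesimal supergroup schemes, so that Theorem~\ref{theorem:elementaryclassification} collapses exactly to the asserted list. Here $\Gar$ is infinitesimal by construction, as the $r$-th Frobenius kernel of $\Ga$ (with the convention that $r=0$ gives the trivial group); $\Gam$ is infinitesimal because $k[\Gam]=k[\tau]/(\tau^2)$ is local with nilpotent augmentation ideal; a finite product of infinitesimal supergroup schemes is again infinitesimal, so $\Gar\times\Gam$ is infinitesimal; and $\Mrs$ and $\Mrseta$ are infinitesimal by their construction in \cite{Drupieski:2017a}. Conversely, every supergroup scheme appearing in the statement's list is the $t=0$ case of one of the families in Theorem~\ref{theorem:elementaryclassification}, hence is elementary. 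Therefore the list is precisely the class of infinitesimal elementary $k$-supergroup schemes.

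There is no genuine obstacle here: the argument is a short deduction from the classification theorem, which we are granted. The only points requiring a little care are (i) that the ``no nontrivial idempotents'' criterion must be applied on the coordinate side $k[G]$ rather than to the group algebra $kG$ --- note that $k(\Z/p\Z)\cong k[x]/\bigl((x-1)^p\bigr)$ is itself local even though $\Z/p\Z$ is not infinitesimal, so it is $k[G]$ and not $kG$ that detects infinitesimality --- and (ii) the bookkeeping of parameter ranges inherited from Theorem~\ref{theorem:elementaryclassification} (keeping $r\geq 2$, $s\geq 1$, and $0\neq\eta$ in the $\Mrseta$ case), together with the harmless observation that the list is slightly redundant, since $\M_{r;1}=\M_{r;T^p,0}=\Gar\times\Gam$.
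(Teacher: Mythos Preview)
Your proposal is correct and matches the paper's approach: the paper states this as an immediate corollary of Theorem~\ref{theorem:elementaryclassification} with no explicit proof, and you have simply spelled out the routine deduction (discarding the \'etale $(\Z/p\Z)^{\times t}$ factor and verifying that the remaining supergroups are infinitesimal). Your remarks on applying the local/idempotent criterion to $k[G]$ rather than $kG$, and on the redundancy $\M_{r;1}=\Gar\times\Gam$, are accurate and more detailed than anything the paper provides.
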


Recall that a $k$-supergroup scheme $G$ is \emph{algebraic} if $k[G]$ is finitely-generated as a $k$-algebra.

\begin{lemma} \label{lemma:Mrquotient}
Suppose $k$ is perfect, and let $G$ be an algebraic $k$-supergroup scheme. Then $G$ is a quotient of $\Mr$ for some $r \geq 1$ if and only if $G$ is an infinitesimal elementary supergroup scheme.
\end{lemma}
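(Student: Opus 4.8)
The plan is to prove the two implications separately, using the BIKP classification (Corollary~\ref{corollary:infinitesimalelementary}) and a direct analysis of Hopf (super)algebra quotients of $\Pr$ and $k\Mr$.

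For the ``only if'' direction, suppose $G$ is a quotient of $\Mr$. First I would observe that a quotient of $\Mr$ as supergroup schemes corresponds dually to a Hopf subsuperalgebra inclusion $k[G] \hookrightarrow k[\Mr]$, equivalently (after passing to $\#$-duals and using the finite-type $\Z$-grading recalled in Section~\ref{subsection:conventions}) to a Hopf superalgebra quotient $k\Mr \twoheadrightarrow k[G]^{\#}$, or at the polynomial level a graded Hopf superalgebra quotient $\pi: \Pr \twoheadrightarrow A$. Since $\Pr$ is a finite-dimensional local (super)commutative ring and its augmentation ideal is nilpotent, any quotient $A$ inherits these properties, so $G$ is finite and infinitesimal. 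To see $G$ is \emph{elementary}, I would invoke the explicit presentation~\eqref{eq:kMrpol}: $\Pr$ is generated by the commuting elements $u_0, \dots, u_{r-1}, v$ with $u_i^p = 0$ ($i<r-1$), $u_{r-1}^p + v^2 = 0$, and the primitive elements are spanned (in the relevant degrees) by $u_0$, $u_{r-1}^p$, and $v$. A Hopf quotient is obtained by killing a Hopf ideal; since $\Pr$ is cocommutative and generated in low degrees, such an ideal is generated by primitives, i.e.\ by elements of the form $f(u_{r-1}) + \eta\cdot u_0$ (up to a change of the generator $u_0$ by scalars, using perfectness of $k$ to extract $p$-th roots) together possibly with $u_0$ itself or $v$ itself. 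Matching the resulting quotients against the list $k\Gar$, $k(\Gar \times \Gam)$, $k\Mrs$, $k\Mrseta$ then shows $G$ is infinitesimal elementary. This classification-of-Hopf-ideals step is the main obstacle: one must be careful that every Hopf ideal of $\Pr$ really is generated by primitives in the stated form, and that killing $u_0$ gives the $\Gar(\times\Gam)$ cases while killing nothing beyond an $f(u_{r-1})+\eta u_0$ gives the $\Mr$-cases.

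For the ``if'' direction, I would go through the four families of Corollary~\ref{corollary:infinitesimalelementary} and exhibit each as a quotient of a suitable $\Mr$. For $\Mrseta$ and $\Mrs$ this is essentially by construction: $k\Mrfeta = \Pr/\langle f(u_{r-1}) + \eta\cdot u_0 \rangle$ is displayed in Section~\ref{subsection:conventions} as a Hopf quotient of $\Pr$, which dualizes to a supergroup scheme quotient $\Mr \twoheadrightarrow \Mrfeta$; specializing $f = T^{p^s}$ handles $\Mrs$ and $\Mrseta$. For $\Gar$, note that killing the odd generator $v$ (a primitive) in $\Pr$ yields $k[u_0, \dots, u_{r-1}]/\langle u_0^p, \dots, u_{r-2}^p, u_{r-1}^p\rangle$, the group algebra of $\Gar$ realized inside $k\Mr$ via $\theta \mapsto \theta$; this gives $\Gar$ as a quotient of $\Mr$. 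For $\Gar \times \Gam$, recall from the introduction the identity $\M_{r;1} = \M_{r;T^p,0} = \Gar \times \Gam$, which is already the quotient $\Pr/\langle u_{r-1}^p\rangle$ (equivalently $\langle v^2\rangle$) of $\Pr$; alternatively one uses the defining relation $\theta^{p^{r-1}} = \sigma_1$ in $k[\Mr]$ to see directly that $k[\Gar\times\Gam] = k[\Gam]\otimes k[\Gar]$ embeds as a Hopf subsuperalgebra of $k[\Mr]$. In all four cases one records that the claimed quotient map is a supergroup scheme homomorphism by checking it respects the coproducts given explicitly above, which is routine.

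Finally I would note the bookkeeping point that for $r' \leq r$ one has $\Gar[r']$, $\Mr[r';s]$, etc.\ arising as quotients of $\Mr$ (not just $\M_{r'}$): since $u_{r'-1}^p, \dots, u_{r-2}^p$ are nilpotent and the relevant sub/quotient structures are compatible, raising the height parameter only enlarges the source, so every infinitesimal elementary supergroup scheme of height $\leq r$ is a quotient of $\Mr$ for that same $r$. Assembling the two directions gives the equivalence. The only genuinely delicate input is the perfectness hypothesis, which is used (as in BIKP) precisely to normalize the primitive generators of Hopf ideals of $\Pr$ by extracting $p$-th roots of scalars, and this is exactly where a non-perfect $k$ could produce ``twisted'' quotients not on the list.
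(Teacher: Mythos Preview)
Your ``if'' direction is essentially the paper's: one runs through the list in Corollary~\ref{corollary:infinitesimalelementary} and exhibits each supergroup as a quotient of some $\Mr$, which is exactly what the paper does by pointing to Remark~\ref{rem:Hom}\eqref{item:quotientmaps}.

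Your ``only if'' direction, however, has genuine gaps. First, the claim that $\Pr$ is a finite-dimensional local ring with nilpotent augmentation ideal is false: from \eqref{eq:kMrpol} the only relation involving $u_{r-1}$ is $u_{r-1}^p = -v^2$, so $u_{r-1}$ is not nilpotent and $\Pr$ is infinite-dimensional. Consequently your deduction that $G$ is finite and infinitesimal does not follow from properties of $\Pr$, and the passage from a Hopf sub-superalgebra $k[G] \hookrightarrow k[\Mr]$ to a graded Hopf quotient of $\Pr$ is not justified (one would first need to know $G$ is finite, which is what you are trying to prove). Second, you try to classify all Hopf ideals of $\Pr$ and match the quotients against Corollary~\ref{corollary:infinitesimalelementary}; you flag this yourself as ``the main obstacle,'' and indeed the assertion that every such ideal is generated by primitives of the stated form is not established (the elements $u_1,\ldots,u_{r-2}$ are not primitive in $\Pr$, and you give no argument handling ideals that involve them).

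The paper sidesteps all of this with a one-line observation. Since $G$ is algebraic, $k[G]$ is finitely generated, so the comorphism $k[G] \hookrightarrow k[\Mr]$ has image contained in the finitely-generated subalgebra $k[\Mrs]$ for some $s \geq 1$; this is \cite[Remark 3.1.3(4)]{Drupieski:2017a}. Hence the quotient $\Mr \twoheadrightarrow G$ factors through $\pi_{r;s}: \Mr \twoheadrightarrow \Mrs$, so $G$ is a quotient of $\Mrs$. But that is literally the \emph{definition} of elementary (take $t=0$), and infinitesimality is inherited from $\Mrs$. No classification of Hopf ideals is needed: you were aiming to show that $G$ appears on the list of Corollary~\ref{corollary:infinitesimalelementary}, but that corollary is a consequence of the BIKP classification, not the definition of ``elementary.'' The perfectness hypothesis enters only through the use of Corollary~\ref{corollary:infinitesimalelementary} in the ``if'' direction.
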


\begin{proof}
First suppose $\phi: \Mr \rightarrow G$ is a quotient homomorphism. By \cite[Remark 3.1.3(4)]{Drupieski:2017a}, $\phi$ factors through the canonical quotient map $\pi_{r;s}: \Mr \twoheadrightarrow \Mrs$ for some $s \geq 1$. Then $G$ is a quotient of $\Mrs$ for some $s \geq 1$, and hence $G$ is an elementary supergroup scheme. Conversely, each supergroup listed in Corollary \ref{corollary:infinitesimalelementary} is a quotient of $\Mr$ for some $r \geq 1$; cf.\ Remark \ref{rem:Hom}\eqref{item:quotientmaps} below.
\end{proof}

\subsection{Homomorphisms between infinitesimal elementary supergroups} \label{subsection:Homsets}

Recall that if $G$ is an affine $k$-supergroup scheme and if $A \in \csalg_k$ is a commutative $k$-superalgebra, then $G \otimes_k A$ denotes the affine $A$-supergroup scheme with coordinate superalgebra $A[G] := k[G] \otimes_k A$. Given affine super\-group schemes $G$ and $G'$ over $k$, the $k$-superfunctor
\[
\bfHom(G,G') : \csalg_k \rightarrow \mathfrak{sets}
\]
is defined by
\[
\bfHom(G,G')(A) = \Hom_{Grp/A}(G \otimes_k A, G' \otimes_k A),
\]
the set of $A$-supergroup scheme homomorphisms $\rho: G \otimes_k A \rightarrow G' \otimes_k A$. In \cite[Lemma 3.3.2]{Drupieski:2017a} we calculated $\bfHom(G,G')(A)$ in the special case $G = G' = \Mrs$ under the assumption that $A$ is purely even and reduced. In this section we extend those calculations to the case when $G'$ is an arbitrary height-$r$ infinitesimal elementary supergroup scheme and $G = \Mr$.

Let $s \geq 1$ and let $\eta \in k$. As observed in \cite[Lemma 3.1.7]{Drupieski:2017a}, the group algebras $k\Mrseta$ and $k\Mrs$ are isomorphic as $k$-supercoalgebras, and hence $k[\Mrseta]$ and $k[\Mrs]$ are isomorphic as $k$-super\-algebras. Under this identification, the coproduct on $k[\M_{1;s,\eta}]$ is identified in \cite[Lemma 3.1.9]{Drupieski:2017a}. More generally, for $r \geq 2$ one has:

\begin{lemma} \label{lem:k[Mrseta]coproduct}
Let $r \geq 2$, let $s \geq 1$, and let $\eta \in k$. Then identifying $k[\Mrseta]$ and $k[\Mrs]$ as $k$-super\-algebras, the coproduct on $k[\Mrseta]$ satisfies the formulas
	\begin{align*}
	\Delta(\tau) &= \tau \otimes 1 + 1 \otimes \tau, \\
	\Delta(\sigma_\ell) &= \textstyle \sum_{i+j = \ell} \sigma_i \otimes \sigma_j + \sum_{i+j+p=\ell} \sigma_i \tau \otimes \sigma_j \tau, & \text{and} \\
	\Delta(\theta) &= \textstyle \theta \otimes 1 + 1 \otimes \theta - \eta \cdot \left( \sum_{i=1}^{p^s-1} \sigma_i \otimes \sigma_{p^s-i} + \sum_{i+j+p=p^s} \sigma_i \tau \otimes \sigma_j \tau \right).
	\end{align*}
\end{lemma}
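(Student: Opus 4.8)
The plan is to reduce the statement to the already-established case $r=1$ (i.e.\ \cite[Lemma 3.1.9]{Drupieski:2017a}) together with known structural facts about $k[\Mrs]$ and its relation to $k[\Moneone]$-type data. First I would recall from \cite[Lemma 3.1.7]{Drupieski:2017a} the explicit isomorphism $k[\Mrseta] \cong k[\Mrs]$ of superalgebras (dual to the coalgebra isomorphism $k\Mrseta \cong k\Mrs$), and pin down exactly which element of $k[\Mrs]$ plays the role of $\theta$ on the $\Mrseta$ side; the point is that the generators $\tau$ and $\sigma_\ell$ for $1 \le \ell < p^s$ are carried over ``unchanged,'' while $\theta$ is the generator whose $p$-th-power-type relation encodes the parameter $\eta$. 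The first two displayed formulas --- for $\Delta(\tau)$ and $\Delta(\sigma_\ell)$ --- then follow immediately: these match the coproduct formulas on $k[\Mr]$ recalled in Section \ref{subsection:conventions}, truncated to indices below $p^s$, and they do not see $\eta$ at all because the coalgebra isomorphism is the identity on the span of $\tau$ and the $\sigma_i$.

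The real content is the formula for $\Delta(\theta)$. Here I would argue that the only possible correction term is a sum over the span of products $\sigma_i\otimes\sigma_j$ and $\sigma_i\tau\otimes\sigma_j\tau$ sitting in degree $p^s$ (where degree refers to the $\Z$-grading recalled from \cite[Remark 3.1.3(3)]{Drupieski:2017a}: $\theta$ and this correction term both have weight $p^s$). So write $\Delta(\theta) = \theta\otimes 1 + 1\otimes\theta + C$ with $C$ a (symmetric, by cocommutativity up to sign) element of $(k[\Mrs]\otimes k[\Mrs])$ in the appropriate degree, and then determine the coefficients by dualizing: the coefficient of $\sigma_i\otimes\sigma_{p^s-i}$ in $C$ is computed by pairing $\Delta(\theta)$ against the dual basis elements of $k\Mrs$ corresponding to $\sigma_i$ and $\sigma_{p^s-i}$, i.e.\ by reading off a product in the group algebra $k\Mrs = k[u_0,\dots,u_{r-1},v]/\langle\cdots\rangle$. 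The defining relation $f(u_{r-1}) + \eta\cdot u_0$ with $f = T^{p^s}$, i.e.\ $u_{r-1}^{p^s} = -\eta\, u_0$, is precisely what forces the $\eta$-multiple to appear, exactly as in the $r=1$ computation; the only difference for $r\ge 2$ is bookkeeping of the extra variables $u_0,\dots,u_{r-2}$, which are nilpotent of order $p$ and do not interfere with the degree-$p^s$ piece.

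Concretely, the cleanest route is: (i) identify $\theta\in k[\Mrs]$ as (the image of) a specific linear functional on $\Pr$, namely the one dual to $u_{r-1}^{p^{r-1}}$ --- recall $\theta^{p^{r-1}}=\sigma_1$ on $k[\Mr]$, so $\theta$ is ``$u_{r-1}$-like'' of low weight; (ii) use that the comultiplication on $k[\Mrs]$ is dual to the multiplication on $k\Mrs$, so $\langle \Delta(\theta), x\otimes y\rangle = \langle \theta, xy\rangle$ for $x,y\in k\Mrs$; (iii) run through the basis of $k\Mrs$ in weight $p^s$ --- the monomials $u_0^{a_0}\cdots u_{r-1}^{a_{r-1}}$ and $u_0^{a_0}\cdots u_{r-1}^{a_{r-1}}v$ --- and use $u_{r-1}^{p^s} = -\eta\, u_0$ (valid since $s\ge 1$ implies $p^s \ge p > a_i$-bounds are not exceeded in the relevant range, and one must track that $u_{r-1}^{p^{r-1}}$ corresponds to $\sigma_1$ so that $u_{r-1}^{p^{r-1}+j p^{r-1}}\leftrightarrow\sigma_{1+j}$, hence $u_{r-1}^{p^s}\leftrightarrow\sigma_{p^{s-r+1}}$-type indices land correctly) to see that the only nonzero pairings are those producing the claimed sum $\sum_{i=1}^{p^s-1}\sigma_i\otimes\sigma_{p^s-i} + \sum_{i+j+p=p^s}\sigma_i\tau\otimes\sigma_j\tau$ with coefficient $-\eta$.

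The main obstacle I anticipate is purely one of careful index translation between the ``$\sigma$-side'' (divided-power-type basis of $k[\Mr]$, with the awkward relations $\theta^{p^{r-1}}=\sigma_1$ and $\sigma_i\sigma_j=\binom{i+j}{i}\sigma_{i+j}$) and the ``$u,v$-side'' (the formal group algebra \eqref{eq:kMr}): one must make sure that multiplying two $\sigma$-dual functionals and then quotienting by $u_{r-1}^{p^s}+\eta\,u_0$ really does reproduce the stated coefficients, including getting the sign right and confirming that the ``cross'' term $\sum_{i+j+p=p^s}\sigma_i\tau\otimes\sigma_j\tau$ has the same $-\eta$ coefficient as the ``straight'' term. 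This is essentially the $r\ge 2$ analogue of the computation already carried out for $r=1$ in \cite[Lemma 3.1.9]{Drupieski:2017a}, so I would organize the proof to follow that one line-by-line, pointing out at each step that the additional variables $u_0,\dots,u_{r-2}$ only contribute in weights $< p^s$ and hence are irrelevant to $\Delta(\theta)$.
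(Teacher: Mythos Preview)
Your overall strategy---dualizing the algebra structure of $k\Mrseta$ and following the $r=1$ computation of \cite[Lemma~3.1.9]{Drupieski:2017a} line by line---is exactly what the paper does (its proof is literally the one sentence ``deduce via duality from the algebra structure of the group algebra $k\Mrseta$ in the same manner as the proof of \cite[Lemma~3.1.9]{Drupieski:2017a}''). The first two displayed formulas do fall out for the reasons you state.

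However, your concrete identification of the duality is wrong in a way that would make the key computation fail. In the pairing between $k[\Mr]$ and $k\Mr = k[\Mr]^\#$, the generator $u_i$ is by definition the linear functional dual to $\theta^{p^i}$, so $\theta$ is dual to $u_0$, not to $u_{r-1}^{p^{r-1}}$; likewise $u_{r-1}$ (not $u_{r-1}^{p^{r-1}}$) is dual to $\sigma_1 = \theta^{p^{r-1}}$, and more generally $u_{r-1}^i$ is dual to $\sigma_i$. With this corrected, the mechanism you describe works: one computes $\langle \theta, xy\rangle$ for monomials $x,y$ in $k\Mrseta$, and the defining relation $u_{r-1}^{p^s} + \eta\,u_0 = 0$ forces a $u_0$-component to appear exactly when $x\otimes y$ is dual to $\sigma_i \otimes \sigma_{p^s-i}$ or to $\sigma_i\tau \otimes \sigma_j\tau$ with $i+j+p=p^s$, yielding the $-\eta$ correction. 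Your remark that ``$u_0,\dots,u_{r-2}$ are irrelevant'' is therefore also off: $u_0$ is precisely the element you are pairing against. Relatedly, your $\Z$-degree bookkeeping is not quite right---in the grading of \cite[Remark~3.1.3(3)]{Drupieski:2017a} one has $\deg(\theta)=2$ while the correction term has degree $2p^{r+s-1}$, so the $\Mrseta$-coproduct does \emph{not} preserve that grading when $\eta\neq 0$; you cannot isolate the correction term by degree reasons alone, only by the duality computation.
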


\begin{proof}
The stated formulas can be deduced via duality from the algebra structure of the group algebra $k\Mrseta$ in the same manner as the proof of \cite[Lemma 3.1.9]{Drupieski:2017a}.
\end{proof}

Portions of the next proposition are contained already in \cite[Lemma 3.3.2]{Drupieski:2017a}. Recall from \cite[\S3.1]{Drupieski:2017a} that $k[\Gar] = k[\theta]/\subgrp{\theta^{p^r}}$ and $k[\Gam] = \Lambda(\tau)$. Also recall for $t \geq 1$ that the canonical quotient maps $\Mr \twoheadrightarrow \Mrt \twoheadrightarrow \Gar$ are defined via the subalgebra inclusions $k[\Gar] \hookrightarrow k[\Mrt] \hookrightarrow k[\Mr]$, and similarly for the other quotient maps referenced in the next proposition.

\begin{proposition} \label{prop:identifyhom}
Let $A = \Azero \in \calg_k$ be a purely even commutative $k$-algebra.
	\begin{enumerate}
	\item \label{item:HomMrone} Let $t \geq 1$. Then the canonical quotient maps $\Mr \twoheadrightarrow \Mrt \twoheadrightarrow \Mrone$ induce identifications
		\begin{align*}
		\bfHom(\Mrone,\Mrone)(A) &= \bfHom(\Mrt,\Mrone)(A) = \bfHom(\Mr,\Mrone)(A) \\
		&= \set{ (\mu,a_0,\ldots,a_{r-1}) \in A^{r+1}}.
		\end{align*}
	Given a tuple $(\mu,a_0,\ldots,a_{r-1}) \in A^{r+1}$, the corresponding comorphism $\phi: A[\Mrone] \rightarrow A[\Mr]$ is completely specified by the formulas $\phi(\tau) = \mu \cdot \tau$ and $\phi(\theta) = \sum_{i=0}^{r-1} a_i \cdot \theta^{p^i}$.
	
	\item \label{item:HomMrs} Let $2 \leq s \leq t$, and suppose $A$ is reduced. Then the canonical quotient maps $\Mr \twoheadrightarrow \Mrt \twoheadrightarrow \Mrs$ induce identifications
		\begin{align*}
		\bfHom(\Mrs,\Mrs)(A) &= \bfHom(\Mrt,\Mrs)(A) = \bfHom(\Mr,\Mrs)(A) \\
		&= \set{ (\mu,a_0,\ldots,a_{r-1},b_s) \in A^{r+2} : \mu^2 = a_0^{p^r}}.
		\end{align*}
	Given a tuple $(\mu,a_0,\ldots,a_{r-1},b_s) \in A^{r+2}$ such that $\mu^2 = a_0^{p^r}$, the corresponding comorphism $\phi: A[\Mrs] \rightarrow A[\Mr]$ is completely specified by the formulas $\phi(\tau) = \mu \cdot \tau$, $\phi(\theta) = \sum_{i=0}^{r-1} a_i \cdot \theta^{p^i}$, $\phi(\sigma_i) = a_0^{ip^{r-1}} \cdot \sigma_i$ for $0 \leq i < p^{s-1}$, and $\phi(\sigma_{p^{s-1}}) = a_0^{p^{r+s-2}} \cdot \sigma_{p^{s-1}} + b_s \cdot \sigma_1$.
	
	\item \label{item:HomMrseta} Suppose $r \geq 2$, let $1 \leq s < t$, and let $0 \neq \eta \in k$. Suppose $A$ is reduced. Then the canonical quotient maps $\Mr \twoheadrightarrow \Mrt \twoheadrightarrow \M_{r;s+1}$ induce identifications
		\begin{align*}
		\bfHom(\M_{r;s+1},\Mrseta)(A) &= \bfHom(\Mrt,\Mrseta)(A) = \bfHom(\Mr,\Mrseta)(A) \\
		&= \set{ (\mu,a_0,\ldots,a_{r-1}) \in A^{r+1} : \mu^2 = a_0^{p^r} }
		\end{align*}
	Given a tuple $(\mu,a_0,\ldots,a_{r-1}) \in A^{r+1}$ such that $\mu^2 = a_0^{p^r}$, the corresponding comorphism $\phi: A[\Mrseta] \rightarrow A[\Mr]$ is completely specified by the formulas $\phi(\tau) = \mu \cdot \tau$,
		\[ \textstyle
		\phi(\theta) = (\sum_{i=0}^{r-1} a_i \cdot \theta^{p^i}) - \eta \cdot (a_0^{p^{r+s-1}} \sigma_{p^s}),
		\]
	and $\phi(\sigma_i) = a_0^{ip^{r-1}} \cdot \sigma_i$ for $0 \leq i < p^s$.

	\item \label{item:HomGar} Let $t \geq 1$. Then the canonical quotient maps $\Mr \twoheadrightarrow \Mrt \twoheadrightarrow \Gar$ induce identifications
		\[
		\bfHom(\Gar,\Gar)(A) = \bfHom(\Mrt,\Gar)(A) = \bfHom(\Mr,\Gar)(A) = A^r
		\]
	Given a tuple $(a_0,\ldots,a_{r-1}) \in A^r$, the corresponding comorphism $\phi: A[\Gar] \rightarrow A[\Mr]$ is completely specified by the formula $\phi(\theta) = \sum_{i=0}^{r-1} a_i \cdot \theta^{p^i}$.

	\item \label{item:HomGaminus} Let $t \geq 1$. Then the canonical quotient maps $\Mr \twoheadrightarrow \Mrt \twoheadrightarrow \Gam$ induce identifications
		\[
		\bfHom(\Gam,\Gam)(A) = \bfHom(\Mrt,\Gam)(A) = \bfHom(\Mr,\Gam)(A) = A			\]
	Given $\mu \in A$, the corresponding comorphism $\phi: A[\Gam] \rightarrow A[\Mr]$ is specified by $\phi(\tau) = \mu \cdot \tau$.
	\end{enumerate}
\end{proposition}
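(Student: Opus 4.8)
The plan is to prove each part by a duality argument that reduces a supergroup scheme homomorphism $\rho\colon \Mr \otimes_k A \to G \otimes_k A$ to its comorphism $\phi\colon A[G] \to A[\Mr]$, and then to pin down $\phi$ by tracking where it sends the algebra generators $\tau$, $\theta$, and $\sigma_i$ of $A[G]$. The key observation is that $\phi$ must be a homomorphism of Hopf $A$-superalgebras, so it must respect both the superalgebra relations (listed in Section \ref{subsection:conventions}) and the coproduct formulas (given in Section \ref{subsection:conventions} and in Lemma \ref{lem:k[Mrseta]coproduct}). Concretely, I would first record that, since $A$ is purely even, $\phi(\tau)$ must be an odd element of $A[\Mr]$, hence of the form $\mu\cdot\tau$ for a unique $\mu \in A$ (using that the odd part of $A[\Mr]$ is the free $A[\Mr]_{\bar 0}$-module on $\tau$); and $\phi(\theta)$ must be an even primitive-like element, which together with the relation $\theta^{p^{r-1}} = \sigma_1$ on the source and the structure of $A[\Mr]$ forces $\phi(\theta) = \sum_{i=0}^{r-1} a_i \theta^{p^i}$ modulo contributions from the $\sigma_j$'s. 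The factorization through $\pi_{r;s}$ (cited in the proof of Lemma \ref{lemma:Mrquotient}) is what lets me replace $\Mr$ by $\Mrt$ and then by the appropriate height-$r$ quotient on the source side, so that all three displayed Hom-sets in each part coincide.

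After establishing the general shape of $\phi$, the next step in each part is to feed this shape into the coproduct compatibility $\Delta_{A[\Mr]} \circ \phi = (\phi \otimes \phi) \circ \Delta_{A[G]}$ and into the defining relations of $A[G]$ to extract the constraint equations on the scalars. For part \eqref{item:HomMrone} (target $\Mrone$) there are no $\sigma_i$ with $i \geq 1$ to worry about beyond $\sigma_1 = \theta^{p^{r-1}}$, the only relations are $\tau^2 = 0$ and the truncation $\theta^{p}=0$ pushed down to $\Mrone$, and $\tau$, $\theta$ are primitive, so no constraint beyond membership in $A^{r+1}$ survives — this is the base case. For parts \eqref{item:HomMrs} and \eqref{item:HomMrseta} the relation $\theta^{p^{r-1}} = \sigma_1$ combined with the Frobenius-twisted behavior $\phi(\sigma_i) = a_0^{ip^{r-1}}\sigma_i$ (which itself is forced inductively from $\Delta(\sigma_i) = \sum \sigma_u \otimes \sigma_v + \cdots$ together with the divided-power relations $\sigma_i\sigma_j = \binom{i+j}{i}\sigma_{i+j}$, using reducedness of $A$ to take $p$-th roots uniquely) is what produces the constraint $\mu^2 = a_0^{p^r}$: one compares the $\tau\otimes\tau$-terms in $\Delta(\sigma_{p^{s-1}})$ (resp.\ $\Delta(\sigma_{p^s})$) against $\phi(\tau)\otimes\phi(\tau) = \mu^2\,\tau\otimes\tau$. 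The extra parameter $b_s$ in \eqref{item:HomMrs} arises because $\sigma_{p^{s-1}}$ becomes primitive in $\Mrs$ after killing higher $\sigma$'s, so $\phi(\sigma_{p^{s-1}})$ can pick up an arbitrary multiple of the primitive $\sigma_1$; in \eqref{item:HomMrseta} the analogous freedom is instead absorbed into the $\eta$-correction term in $\phi(\theta)$, which is dictated by Lemma \ref{lem:k[Mrseta]coproduct}. Parts \eqref{item:HomGar} and \eqref{item:HomGaminus} are then immediate specializations, since $k[\Gar] = k[\theta]/\langle\theta^{p^r}\rangle$ and $k[\Gam] = \Lambda(\tau)$ involve only one generator apiece with primitive coproduct and a single truncation relation.

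Conversely, for each part I must check that every tuple satisfying the stated constraint does define a Hopf superalgebra map $\phi\colon A[G] \to A[\Mr]$ — i.e.\ that the assignments on generators respect all relations of $A[G]$ and are coalgebra morphisms. Respecting the polynomial/divided-power relations is a direct computation with binomial coefficients mod $p$; respecting the coproduct is where the constraint $\mu^2 = a_0^{p^r}$ is exactly what is needed, and one verifies this by matching the two families of summands (the $\sigma_u\otimes\sigma_v$ family and the $\sigma_u\tau\otimes\sigma_v\tau$ family) separately. The main obstacle I anticipate is the bookkeeping in parts \eqref{item:HomMrs} and \eqref{item:HomMrseta}: one must carefully argue that no further correction terms beyond $b_s\cdot\sigma_1$ (resp.\ the $\eta$-term) can appear in $\phi(\sigma_{p^{s-1}})$ or $\phi(\theta)$, which requires a degree/filtration argument using the $\Z$-grading on $k[\Mr]$ noted in Section \ref{subsection:conventions} (so that $\phi$ is forced to be graded and the image of a generator lies in a single graded piece, which is spanned by $\sigma_i$'s of the appropriate weight together with the relevant power of $\theta$) and then using reducedness of $A$ to rigidify the coefficients. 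Once the grading constraint is in place, the remaining verification is the routine binomial-coefficient and $p$-th-power computation, and I would present only a representative case (say part \eqref{item:HomMrs}) in full, indicating that the others follow by the same method or by specialization.
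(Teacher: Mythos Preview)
Your overall approach matches the paper's: classify Hopf superalgebra maps $\phi\colon A[G]\to A[\Mr]$ by tracking the generators through the coproduct and algebra relations, then observe that the image lands in the relevant Hopf subalgebra of $A[\Mr]$ (dually, the supergroup map factors through the canonical quotient), which yields the chain of equalities of Hom-sets. The converse verification and the plan to treat part~\eqref{item:HomMrs} in detail are also what the paper does.

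There is, however, a genuine gap in your mechanism for eliminating extra correction terms. You assert that the $\Z$-grading on $k[\Mr]$ forces $\phi$ to be graded, so that the image of a generator lies in a single graded piece. This is false: already $\phi(\theta)=\sum_i a_i\theta^{p^i}$ spans degrees $2,2p,\ldots,2p^{r-1}$, and $\phi(\sigma_{p^{s-1}})=a_0^{p^{r+s-2}}\sigma_{p^{s-1}}+b_s\sigma_1$ sits in two distinct degrees. The paper's argument is more delicate. One computes $(\phi\otimes\phi)\Delta(\sigma_p)$ explicitly and notes that its non-primitive part, namely $a^p\sum_{i=1}^{p-1}\sigma_i\otimes\sigma_{p-i}+\mu^2\,(\tau\otimes\tau)$, is homogeneous of degree $2p^r$. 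Since $\Delta$ is a graded injection, the components of $\phi(\sigma_p)$ in degrees $\neq 2p^r$ must each be \emph{primitive}. The even primitives of $A[\Mr]$ are exactly the $A$-span of $\theta,\theta^p,\ldots,\theta^{p^{r-1}}=\sigma_1$; because $(\sigma_p)^p=0$, the primitive part of $\phi(\sigma_p)$ is $p$-nilpotent, and reducedness of $A$ then forces it to be a multiple of $\sigma_1$ alone (this is how reducedness enters---killing nilpotent coefficients, not extracting $p$-th roots as you suggest). Matching the degree-$2p^r$ piece yields $a^p=\mu^2$, so the constraint appears first at $\sigma_p$, not at $\sigma_{p^{s-1}}$. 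An inductive version of this primitivity argument handles $\sigma_{p^n}$ for $1\le n<s-1$: one shows the putative extra coefficient $c_n$ satisfies $c_n^p=0$ (by comparing the lowest-degree piece of $(\phi\otimes\phi)\Delta(\sigma_{p^{n+1}})$ against the absence of any $\tau\otimes\tau$ term there), hence $c_n=0$ by reducedness.

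One smaller point: your justification for $\phi(\tau)=\mu\tau$ from ``the odd part is free on $\tau$'' only gives $\phi(\tau)=f\cdot\tau$ with $f\in A[\Mr]_{\bar 0}$; you still need that $\tau$ is primitive in the source to force $f\in A$.
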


\begin{proof}
Recall that specifying a homomorphism of $A$-supergroup schemes $\rho: G \rightarrow G'$ is equivalent to specifying a homomorphism of $A$-Hopf superalgebras $\rho^*: A[G'] \rightarrow A[G]$. The proof strategy for each of the statements is to first identify the set of all Hopf superalgebra homomorphisms from $A[\Mrone]$ (resp.\ from $A[\Mrs]$, $A[\Mrseta]$, $A[\Gar]$, $A[\Gam]$) into $A[\Mr]$. Using the explicit identifications, one can then observe that the image of each Hopf superalgebra homomorphism is contained in the Hopf subalgebra $A[\Mrone]$ (resp.\ $A[\Mrs]$, $A[\M_{r;s+1}]$, $A[\Gar]$, $A[\Gam]$) of $A[\Mr]$, and from that observation the first two equalities in each statement follow immediately. The identification of all homomorphisms $\phi: A[\Mrone] \rightarrow A[\Mr]$ follows from a word-for-word repetition of the argument in the first paragraph of the proof of \cite[Lemma 3.3.2]{Drupieski:2017a}. The proofs of statements \eqref{item:HomGar} and \eqref{item:HomGaminus} also follow via reasoning entirely similar to that in the proof of \cite[Lemma 3.3.2]{Drupieski:2017a}, so for the remainder of this proof we focus on statements \eqref{item:HomMrs} and \eqref{item:HomMrseta}. We first prove \eqref{item:HomMrseta} and then indicate how the argument should be modified for \eqref{item:HomMrs}.

Suppose $r \geq 2$, let $s \geq 1$, and let $0 \neq \eta \in k$. Let $\phi: A[\Mrseta] \rightarrow A[\Mr]$ be a homomorphism of $A$-Hopf superalgebras, identifying the Hopf structure on $A[\Mrseta]$ as in Lemma \ref{lem:k[Mrseta]coproduct}. Our first goal is to describe the action of $\phi$ on the elements $\tau,\sigma_1,\sigma_p,\ldots,\sigma_{p^{s-1}} \in A[\Mrseta]$. Since $\phi$ is by definition an even map, it must map $\tau$ to an odd primitive element in $A[\Mr]$. Since $A$ is purely even by assumption, this implies that $\phi(\tau) = \mu \cdot \tau$ for some $\mu \in A$. Next, by the assumption that $r \geq 2$, the element $\sigma_1 \in A[\Mrseta]$ is primitive and $(\sigma_1)^p = 0$.\footnote{If $\eta \neq 0$ and $r=1$, then $\sigma_1$ need not be primitive in $A[\Mrseta]$; cf.\ \cite[Lemma 3.1.9]{Drupieski:2017a}.} Then $\phi(\sigma_1)$ must be a primitive element in $A[\Mr]$ such that $\phi(\sigma_1)^p = 0$, so $\phi(\sigma_1) = a \cdot \sigma_1$ for some $a \in A$. Since $(\sigma_1)^n = n! \cdot \sigma_n$, this further implies for $0 \leq i < p$ that $\phi(\sigma_i) = a^i \cdot \sigma_i$.

Our goal is to show for $0 \leq i < p^{s-1}$ that $\phi(\sigma_i) = a^i \cdot \sigma_i$, and that $\phi(\sigma_{p^{s-1}}) = a^{p^{s-1}} \cdot \sigma_{p^{s-1}} + b_s \cdot \sigma_1$ for some $b_s \in A$. Taking $b_1 = 0$, the claim is true by the results of the preceding paragraph in the case $s=1$, \emph{so assume for the rest of this paragraph and the next that $s \geq 2$}. First consider $\phi(\sigma_p)$. Since $\phi$ is a Hopf superalgebra homomorphism, then
\begin{equation} \label{eq:coproductsigmap}
\begin{split}
\Delta \circ \phi(\sigma_p) &= (\phi \otimes \phi) \circ \Delta(\sigma_p) \\
&= \textstyle \phi(\sigma_p) \otimes 1 + 1 \otimes \phi(\sigma_p) + \left(\sum_{i=1}^{p-1} \phi(\sigma_i) \otimes \phi(\sigma_{p-i})\right) + \phi(\tau) \otimes \phi(\tau) \\
&= \textstyle \phi(\sigma_p) \otimes 1 + 1 \otimes \phi(\sigma_p) + a^p \cdot \left(\sum_{i=1}^{p-1} \sigma_i \otimes \sigma_{p-i} \right) + \mu^2 \cdot ( \tau \otimes \tau ).
\end{split}
\end{equation}
By \cite[Remark 3.1.3(3)]{Drupieski:2017a}, $A[\Mr]$ is a $\Z$-graded Hopf superalgebra with $\deg(\theta) = 2$, $\deg(\sigma_j) = 2jp^{r-1}$, and $\deg(\tau) = p^r$. In particular, the degree-$2p^r$ component of $A[\Mr]$ is spanned by $\sigma_p$. Since the coproduct on a Hopf (super)algebra is automatically injective, \eqref{eq:coproductsigmap} then implies that the graded components of $\phi(\sigma_p)$ of $\Z$-degrees other than $2p^r$ must be primitive. Denoting the degree-$2p^r$ component of $\phi(\sigma_p)$ by $c \cdot \sigma_p$, \eqref{eq:coproductsigmap} says that
	\[ \textstyle
	a^p \cdot \left(\sum_{i=1}^{p-1} \sigma_i \otimes \sigma_{p-i} \right) + \mu^2 \cdot ( \tau \otimes \tau ) = \Delta(c \cdot \sigma_p) = c \cdot \left(\sum_{i=1}^{p-1} \sigma_i \otimes \sigma_{p-i} + \tau \otimes \tau \right),
	\]
and hence $a^p = c = \mu^2$. The primitive components of $\phi(\sigma_p)$ must be $p$-nilpotent because $(\sigma_p)^p = 0$. By the assumption that $A$ is reduced, the only $p$-nilpotent primitive elements in $A[\Mr]$ are scalar multiples of $\sigma_1$. Then $\phi(\sigma_p) = a^p \cdot \sigma_p + c_1 \cdot \sigma_1$ for some $c_1 \in A$.

Now let $1 \leq n < s-1$. Suppose by way of induction that $\phi(\sigma_i) = a^i \cdot \sigma_i$ for $0 \leq i < p^n$, and that $\phi(\sigma_{p^n}) = a^{p^n} \cdot \sigma_{p^n} + c_n \cdot \sigma_1$ for some $c_n \in A$. We will then show that $c_n=0$, that $\phi(\sigma_i) = a^i \cdot \sigma_i$ for $0 \leq i < p^{n+1}$, and that $\phi(\sigma_{p^{n+1}}) = a^{p^{n+1}} \cdot \sigma_{p^{n+1}} + c_{n+1} \cdot \sigma_1$ for some $c_{n+1} \in A$; by induction, this will complete the goal laid out at the beginning of the preceding paragraph. From the divided power relation $\sigma_i \sigma_j = \binom{i+j}{i} \sigma_{i+j}$ in $A[\Mr]$ (and Lucas's Theorem for binomial coefficients mod $p$), it follows that if $0 \leq i < p^n$ and $0 \leq j < p$, then
	\[
	\sigma_{i+jp^n} = \sigma_i \cdot \sigma_{jp^n} = \frac{1}{j!} \cdot \sigma_i \cdot (\sigma_{p^n})^j.
	\]
and hence
	\begin{equation} \label{eq:phisigmaijpn}
	\phi(\sigma_{i+jp^n}) = \frac{1}{j!} (a^i \cdot \sigma_i) \left( \sum_{\ell=0}^j \binom{j}{\ell} a^{\ell p^n} c_n^{j-\ell} \cdot (\sigma_{p^n})^\ell (\sigma_1)^{j-\ell} \right) = \sum_{\ell=0}^j a^{i+\ell p^n} c_n^{j-\ell} \cdot \sigma_{i+\ell p^n} \cdot \sigma_{j-\ell}.
	\end{equation}
Since $\phi$ is a Hopf superalgebra homomorphism, then
\begin{multline} \label{eq:pn1coproduct}
\Delta \circ \phi(\sigma_{p^{n+1}}) = (\phi \otimes \phi) \circ \Delta(\sigma_{p^{n+1}}) \\
= \phi(\sigma_{p^{n+1}}) \otimes 1 + 1 \otimes \phi(\sigma_{p^{n+1}}) + \sum_{i=1}^{p^{n+1}-1} \phi(\sigma_i) \otimes \phi(\sigma_{p^{n+1}-i}) +  \sum_{i+j+p=p^{n+1}} \mu^2 \cdot \phi(\sigma_i) \tau \otimes \phi(\sigma_j) \tau.
\end{multline}
Using \eqref{eq:phisigmaijpn}, and considering the $\Z$-grading on $A[\Mr]$, one can check that the homogeneous component of lowest $\Z$-degree in the sum $\sum_{i=1}^{p^{n+1}-1} \phi(\sigma_i) \otimes \phi(\sigma_{p^{n+1}-1})$ is
	\[
	(c_n)^p \cdot \left( \sigma_1 \otimes \sigma_{p-1} + \sigma_2 \otimes \sigma_{p-2} + \cdots + \sigma_{p-1} \otimes \sigma_1 \right).
	\]
By the injectivity of the coproduct, this implies that the degree-$2p^r$ component of $\phi(\sigma_{p^{n+1}})$ must be $(c_n)^p \cdot \sigma_p$. On the other hand, one can check that no scalar multiple of $\tau \otimes \tau$ occurs in the expansion of the sum $\sum_{i+j+p=p^{n+1}} \phi(\sigma_i) \tau \otimes \phi(\sigma_j) \tau$. This implies that the degree-$2p^r$ component of $\phi(\sigma_{p^{n+1}})$ must be $0$, and hence because $A$ is reduced implies that $c_n=0$. So $\phi(\sigma_{p^n})$ is simply equal to $a^{p^n} \cdot \sigma_{p^n}$, and it follows from the algebra relations among $\sigma_1,\sigma_p,\ldots,\sigma_{p^n}$ (cf.\ \cite[(3.1.1)]{Drupieski:2017a}) that $\phi(\sigma_i) = a^i \cdot \sigma_i$ for $0 \leq i < p^{n+1}$. Now, considering \eqref{eq:pn1coproduct} and arguing as in the previous paragraph for $\phi(\sigma_p)$, it follows that $\phi(\sigma_{p^{n+1}}) = a^{p^{n+1}} \cdot \sigma_{p^{n+1}} + c_{n+1} \cdot \sigma_1$ for some $c_{n+1} \in A$.

The inductive argument of the previous two paragraphs shows that $\phi(\sigma_i) = a^i \cdot \sigma_i$ for $0 \leq i < p^{s-1}$, and that $\phi(\sigma_{p^{s-1}}) = a^{p^{s-1}} \cdot \sigma_{p^{s-1}} + b_s \cdot \sigma_1$ for some $b_s \in A$, with $b_1 = 0$. To finish describing the Hopf superalgebra homomorphism $\phi: A[\Mrseta] \rightarrow A[\Mr]$, it remains to describe the action of $\phi$ on the generator $\theta \in A[\Mrseta]$. From the coproduct formula of Lemma \ref{lem:k[Mrseta]coproduct} one gets
	\begin{multline} \label{eq:coproducttheta}
	\Delta \circ \phi(\theta) = (\phi \otimes \phi) \circ \Delta(\theta) \\
	= \phi(\theta) \otimes 1 + 1 \otimes \phi(\theta) - \eta \cdot \left( \sum_{i=1}^{p^s-1} \phi(\sigma_i) \otimes \phi(\sigma_{p^s-i}) + \sum_{i+j+p=p^s} \mu^2 \cdot \phi(\sigma_i) \tau \otimes \phi(\sigma_j) \tau \right).
	\end{multline}
Applying \eqref{eq:phisigmaijpn}, and considering the $\Z$-grading on $A[\Mr]$, it follows as in the third paragraph of the proof that the $\Z$-graded components of $\phi(\theta)$ of degrees less than $2p^r$ must be primitive, and hence must be an $A$-linear combination of the elements $\theta,\theta^p,\ldots,\theta^{p^{r-1}}$. One can further check, as in the previous paragraph, that the component in $\sum_{i=1}^{p^s-1} \phi(\sigma_i) \otimes \phi(\sigma_{p^s-i})$ of lowest $\Z$-degree is $(b_s)^p \cdot (\sum_{i=1}^{p-1} \sigma_i \otimes \sigma_{p-i})$, but that no scalar multiple of $\tau \otimes \tau$ occurs in $\sum_{i+j+p=p^s} \phi(\sigma_i) \tau \otimes \phi(\sigma_j)\tau$. Since $\eta \neq 0$ and since $A$ is reduced, this implies as in the previous paragraph that $b_s =0$. Then $\phi(\sigma_{p^{s-1}})$ is simply equal to $a^{p^{s-1}} \cdot \sigma_{p^{s-1}}$, so it follows for all $0 \leq i < p^s$ that $\phi(\sigma_i) = a^i \cdot \sigma_i$.

Now if $s \geq 2$, then $\mu^2 = a^p$ by the second paragraph of the proof, so we get
	\[
	\Delta \circ \phi(\theta) = \phi(\theta) \otimes 1 + 1 \otimes \phi(\theta) - (\eta \cdot a^{p^s}) \cdot \left( \sum_{i=1}^{p^s-1} \sigma_i \otimes \sigma_{p^s-i} + \sum_{i+j+p=p^s} \sigma_i \tau \otimes \sigma_j \tau \right),
	\]
and hence $\phi(\theta) = (\sum_{i=0}^{r-1} a_i \cdot \theta^{p^i}) - (\eta \cdot a^{p^s}) \cdot \sigma_{p^s}$ for some $a_0,a_1,\ldots,a_{r-1} \in A$ by the injectivity of the coproduct. But
	\begin{equation} \label{eq:sigma1theta}
	a \cdot \sigma_1 = \phi(\sigma_1) = \phi(\theta^{p^{r-1}}) = \phi(\theta)^{p^{r-1}} = a_0^{p^{r-1}} \cdot \theta^{p^{r-1}} = a_0^{p^{r-1}} \cdot \sigma_1,
	\end{equation}
so $a = a_0^{p^{r-1}}$. Thus for $s \geq 2$, the homomorphism $\phi$ is completely determined by the elements $\mu,a_0,a_1,\ldots,a_{r-1} \in A$, and these elements satisfy the additional condition that $\mu^2 = a^p = a_0^{p^r}$. On the other hand, if $s = 1$ then we know from the second paragraph of the proof that $\phi(\sigma_i) = a^i \cdot \sigma_i$ for $0 \leq i < p$, and hence
	\begin{equation} \label{eq:coproductthetas=1}
	\Delta \circ \phi(\theta) = \phi(\theta) \otimes 1 + 1 \otimes \phi(\theta) - \eta \cdot \left( \sum_{i=1}^{p-1} a^p \cdot (\sigma_i \otimes \sigma_{p-i}) + \mu^2 \cdot (\tau \otimes \tau) \right).
	\end{equation}
Now arguing as we did for $\phi(\sigma_p)$ in the third paragraph of the proof, and using the fact that $\eta \neq 0$, it follows that $a^p = \mu^2$ and $\phi(\theta) = (\sum_{i=0}^{r-1} a_i \cdot \theta^{p^i} ) - (\eta \cdot a^p) \cdot \sigma_p$ for some scalars $a_0,\ldots,a_{r-1} \in A$. Then $a_0^{p^{r-1}} = a$ by \eqref{eq:sigma1theta}, and hence $\mu^2 = a^p = a_0^{p^r}$. Thus for all $s \geq 1$, the Hopf superalgebra homomorphism $\phi: A[\Mrseta] \rightarrow A[\Mr]$ is completely specified by the elements $\mu,a_0,\ldots,a_{r-1} \in A$, which satisfy the additional condition that $\mu^2 = a_0^{p^r}$. Conversely, given any elements $\mu,a_0,\ldots,a_{r-1} \in A$ such that $\mu^2 = a_0^{p^r}$, the assignments
	\[ \textstyle
	\tau \mapsto \mu \cdot \tau, \quad \theta \mapsto (\sum_{i=0}^{r-1} a_i \cdot \theta^{p^i}) - (\eta \cdot a_0^{p^{r+s-1}}) \cdot \sigma_{p^s}, \quad \text{and} \quad \sigma_i \mapsto a_0^{i p^{r-1}} \cdot \sigma_i
	\]
extend to a homomorphism of Hopf superalgebras $\phi: A[\Mrseta] \rightarrow A[\Mr]$. This completes the proof that $\bfHom(\Mr,\Mrseta)(A)$ identifies as a set with $\{(\mu,a_0,\ldots,a_{r-1}) \in A^{r+1}: \mu^2 = a_0^{p^r} \}$.

For the proof of \eqref{item:HomMrs}, one first proceeds as in the second, third, and fourth paragraphs to show that $\phi(\tau) = \mu \cdot \tau$, $\phi(\sigma_i) = a^i \cdot \sigma_i$ for $0 \leq i < p^{s-1}$, and $\phi(\sigma_{p^{s-1}}) = a^{p^{s-1}} \cdot \sigma_{p^{s-1}} + b_s \cdot \sigma_1$, for some elements $\mu,a,b_s \in A$ with $\mu^2 = a^p$. Then one argues as in the fifth and sixth paragraphs (with $\eta = 0$) to show that $\phi(\theta) = \sum_{i=0}^{r-1} a_i \cdot \theta^{p^i}$ for some $a_0,\ldots,a_{r-1} \in A$ with $a_0^{p^{r-1}} = a$. So $\phi: A[\Mrs] \rightarrow A[\Mr]$ is completely specified by the elements $\mu,a_0,\ldots,a_{r-1},b_s \in A$, which satisfy the additional condition that $\mu^2 = a_0^{p^r}$. Conversely, given $s \geq 2$ and given any choice of elements $\mu,a_0,\ldots,a_{r-1},b_s \in A$ such that $\mu^2 = a_0^{p^r}$, one can check that the assignments
	\begin{align*}
	\tau &\mapsto \mu \cdot \tau, & \sigma_i &\mapsto a_0^{i p^{r-1}} \cdot \sigma_i \quad \text{for $0 \leq i < p^{s-1}$}, \\
	\theta &\mapsto a_0 \cdot \theta + a_1 \cdot \theta^p + \cdots + a_{r-1} \cdot \theta^{p^{r-1}}, & \sigma_{p^{s-1}} &\mapsto a_0^{p^{r+s-2}} \cdot \sigma_{p^{s-1}} + b_s \cdot \sigma_1,
	\end{align*}
uniquely specify a Hopf superalgebra homomorphism $\phi: A[\Mrs] \rightarrow A[\Mr]$.
\end{proof}

\begin{remark} \  \label{rem:Hom}
\begin{enumerate}
\item \label{item:quotientmaps} Let $s \geq 1$. The canonical quotient homomorphisms $\pi_{r;s} : \Mr \rightarrow \Mrs$, $q: \Mr \rightarrow \Gar$, and $q^-: \Mr \rightarrow \Gam$ are labeled by $(1,1,0,\ldots,0)$, $(1,0,\ldots,0)$, and $\mu = 1$, respectively. If $r \geq 2$ and $\eta \in k$, the canonical quotient map $\pi_{r;s,\eta}: \Mr \twoheadrightarrow \Mrseta$ is labeled by $(1,1,0,\ldots,0)$.

\item \label{item:iota} Suppose $s \geq 2$, and let $\rho = \rho_{(0,0,1)} : \Mone \rightarrow \Mones$ be the homo\-morphism labeled by the tuple $(0,0,1) \in k^3$. The comorphism $\rho^*: k[\Mones] \rightarrow k[\Mone]$ satisfies $\rho^*(\tau) = 0$, $\rho^*(\sigma_i) = 0$ if $i$ is not divisible by $p^{s-1}$, and $\rho^*(\sigma_{j p^{s-1}}) = \sigma_j$ for $0 \leq j < p$. In particular, $\rho^*$ factors through the subalgebra $k[\sigma_1]/\subgrp{\sigma_1^p} \cong k[\Gaone]$ of $k[\Mone]$ (note that $\theta = \theta^{p^{r-1}} = \sigma_1$ in $k[\Mone]$), so
	\begin{gather*}
	\rho^* = q^* \circ \iota^*: k[\Mones] \twoheadrightarrow k[\Gaone] \hookrightarrow k[\Mone], \\
	\text{and} \quad \rho = \iota \circ q: \Mone \twoheadrightarrow \Gaone \hookrightarrow \Mones,
	\end{gather*}
where $\iota^*: k[\Mones] \twoheadrightarrow k[\Gaone]$ is the Hopf superalgebra homomorphism obtained by restricting the codomain of $\rho^*$, and $\iota: \Gaone \hookrightarrow \Mones$ is the corresponding map of supergroup schemes. The map $\iota$ is a closed embedding with $k[\im(\iota)] = k[\Mones]/\subgrp{\tau,\sigma_1,\ldots,\sigma_{p^{s-1}-1}}$. Since $\tau$ is an element of the defining ideal of $\im(\iota)$, it follows (e.g., using the formula \cite[(3.1.2)]{Drupieski:2017a} for the group multiplication in $\Mones$) that $\im(\iota)$ is central and hence normal in $\Mones$, and that $\Mones/\im(\iota) \cong \M_{1;s-1}$.

\item \label{item:1001factorizations} Recall that $\Moneone = \Gaone \times \Gam$. Then in a similar fashion to the previous item, the maps $\phi_{(0,1)}: \Mone \rightarrow \Moneone$ and $\phi_{(1,0)}: \Mone \rightarrow \Moneone$ labeled by $(0,1)$ and $(1,0)$, respectively, admit factorizations $\phi_{(0,1)} : \Mone \twoheadrightarrow \Gaone \hookrightarrow \Moneone$ and $\phi_{(1,0)} : \Mone \twoheadrightarrow \Gam \hookrightarrow \Moneone$, in which the first arrow in each composition is the canonical quotient map, and the second arrow is the obvious map arising from the direct product decomposition $\Moneone = \Gaone \times \Gam$.
\end{enumerate}
\end{remark}

\begin{corollary} \label{cor:HomMr}
Let $A = \Azero \in \calg_k$ be a purely even commutative $k$-algebra, and suppose $A$ is reduced. Then there exists a natural identification
	\[
	\bfHom(\Mr,\Mr)(A) = \set{ (\mu,a_0,\ldots,a_{r-1}) \in A^{r+1} : \mu^2 = a_0^{p^r}}.
	\]
Under this identification, the tuple $(\mu,\ualpha) := (\mu,a_0,\ldots,a_{r-1})$ corresponds to the supergroup homomorphism whose comorphism $\phi = \phi_{(\mu,\ualpha)} : A[\Mr] \rightarrow A[\Mr]$ satisfies
	\[ \textstyle
	\phi(\tau) = \mu \cdot \tau, \quad \phi(\theta) = \sum_{i=0}^{r-1} a_i \cdot \theta^{p^i}, \quad \text{and} \quad \phi(\sigma_i) = a_0^{ip^{r-1}} \cdot \sigma_i \quad \text{for $i \geq 0$.}
	\]
\end{corollary}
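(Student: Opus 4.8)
The plan is to derive Corollary~\ref{cor:HomMr} from Proposition~\ref{prop:identifyhom} by taking the limit over the canonical quotient tower $\Mr \twoheadrightarrow \Mrt$, rather than by repeating the coproduct computations. First I would observe that a Hopf superalgebra homomorphism $\phi: A[\Mr] \to A[\Mr]$ is determined by its effect on the generators $\tau$, $\theta$, and the $\sigma_i$ for $i \in \N$. Since $\phi$ is even and $A$ is purely even, $\phi(\tau)$ must be an odd primitive of $A[\Mr]$, hence $\phi(\tau) = \mu\cdot\tau$ for some $\mu \in A$; and $\phi(\theta)$ must be a $\Z$-graded element whose components of degree $<2p^r$ are primitive, so (arguing exactly as in the proof of Proposition~\ref{prop:identifyhom}, using the $\Z$-grading with $\deg(\theta)=2$, $\deg(\sigma_j)=2jp^{r-1}$, $\deg(\tau)=p^r$, together with the fact that $\theta$ has no $p$-power relation forced by a lower-degree element) one gets $\phi(\theta) = \sum_{i=0}^{r-1} a_i\cdot\theta^{p^i} + (\text{higher-degree terms})$. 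The key point will be that in $A[\Mr]$, as opposed to $A[\Mrs]$, there is no truncation of the $\sigma$-tower, so the higher-degree terms in $\phi(\theta)$ are not pinned down by a single coproduct computation; instead I handle this by passing to the finite quotients.

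Concretely, the second step is to use that $\phi$ respects the $\Z$-grading and factors, for each $t \geq 1$, through a compatible family. More precisely, composing $\phi$ with the quotient $\pi_{r;t}: \Mr \twoheadrightarrow \Mrt$ gives, by Proposition~\ref{prop:identifyhom}\eqref{item:HomMrs} (for $t \geq 2$) applied with target $\Mrt$, an element of $\bfHom(\Mr,\Mrt)(A) = \{(\mu,a_0,\dots,a_{r-1},b_t)\}$; but I must first check that $\pi_{r;t}\circ(\text{the supergroup map}) $ lands in $\Mrt$, i.e.\ that $\phi(\theta)$ and $\phi(\sigma_i)$ have no component that survives in $A[\Mrt]$ beyond what Proposition~\ref{prop:identifyhom} allows. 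Here I would invoke that $\phi(\sigma_1)$ is primitive and $p$-nilpotent in $A[\Mr]$, hence (since $A$ is reduced) equals $a\cdot\sigma_1$ for a unique $a \in A$, and then $\phi(\theta)^{p^{r-1}} = \phi(\sigma_1) = a\cdot\sigma_1$ forces $a = a_0^{p^{r-1}}$ exactly as in equation~\eqref{eq:sigma1theta}. Running the divided-power relations $\sigma_i\sigma_j = \binom{i+j}{i}\sigma_{i+j}$ and Lucas's theorem then propagates $\phi(\sigma_i) = a_0^{ip^{r-1}}\cdot\sigma_i = a^i\cdot\sigma_i$ to all $i$: the inductive mechanism of the proof of Proposition~\ref{prop:identifyhom}\eqref{item:HomMrs} shows, for each $n$, that the error term $b$ attached to $\phi(\sigma_{p^n})$ must vanish because the degree-$2p^r$ obstruction it would create (a multiple of $\sigma_p$) has no counterpart coming from the $\tau\otimes\tau$ term, and $A$ is reduced. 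Since there is no top truncation in $\Mr$, this vanishing now persists for all $n$, so $\phi(\sigma_{p^n}) = a^{p^n}\cdot\sigma_{p^n}$ for every $n$, and likewise the higher-degree correction to $\phi(\theta)$ vanishes, giving $\phi(\theta) = \sum_{i=0}^{r-1} a_i\cdot\theta^{p^i}$ outright. This also shows $\mu^2 = a^p = a_0^{p^r}$, the same relation forced in Proposition~\ref{prop:identifyhom} by comparing the $\sigma_p$- and $\tau\otimes\tau$-coefficients in $\Delta\phi(\sigma_p)$.

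For the converse, I would check directly that for any $(\mu,a_0,\dots,a_{r-1}) \in A^{r+1}$ with $\mu^2 = a_0^{p^r}$, the assignments $\tau\mapsto\mu\cdot\tau$, $\theta\mapsto\sum a_i\theta^{p^i}$, $\sigma_i\mapsto a_0^{ip^{r-1}}\sigma_i$ are compatible with the defining relations of $k[\Mr]$ ($\tau^2=0$, $\theta^{p^{r-1}}=\sigma_1$, $\sigma_i\sigma_j=\binom{i+j}{i}\sigma_{i+j}$) and with the coproduct; the relation-check is immediate, and coproduct-compatibility reduces to the identity $\mu^2 = a_0^{p^r}$ governing the $\tau\tau$-versus-$\sigma$ terms in $\Delta(\sigma_i)$, together with multiplicativity of $a\mapsto a^{p^i}$. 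Naturality in $A$ is then formal. The main obstacle, and the only place requiring genuine care, is justifying that the inductive vanishing of the correction terms $b_s$ (which in Proposition~\ref{prop:identifyhom} only had to be carried out up to the truncation level $p^{s-1}$) goes through for \emph{all} $s$ in the non-finite-dimensional algebra $k[\Mr]$ — in particular that the $\Z$-graded pieces of $\phi(\theta)$ and the $\phi(\sigma_{p^n})$ remain finitely supported and the degree-$2p^r$ obstruction argument still applies unchanged; this is where one genuinely uses that $\Mr$ is the inverse limit of the $\Mrt$ and that $\phi$, being a Hopf map, is compatible with all the truncations simultaneously.
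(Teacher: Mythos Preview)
Your proposal is correct and follows essentially the same approach as the paper: restrict a Hopf superalgebra map $\phi: A[\Mr] \to A[\Mr]$ to the finite subalgebras $A[\Mrs]$ and invoke Proposition~\ref{prop:identifyhom}\eqref{item:HomMrs}. The paper's argument is considerably shorter, however: it simply notes that the restriction $\phi_s := \phi|_{A[\Mrs]}$ is, by Proposition~\ref{prop:identifyhom}\eqref{item:HomMrs}, labeled by a tuple $(\mu,a_0,\ldots,a_{r-1},b_s)$, and the compatibility $\phi_{s+1}|_{A[\Mrs]} = \phi_s$ then forces $b_s = 0$ for every $s$ (because the formulas for $\phi_{s+1}$ give $\phi_{s+1}(\sigma_{p^{s-1}}) = a_0^{p^{r+s-2}}\sigma_{p^{s-1}}$ with no $\sigma_1$ term). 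You do not need to rerun the inductive degree-$2p^r$ obstruction argument from the proof of Proposition~\ref{prop:identifyhom} inside $A[\Mr]$, nor worry about whether ``$\pi_{r;t}\circ(\text{the supergroup map})$ lands in $\Mrt$''---the latter is automatic, and the former is already packaged in the proposition.
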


\begin{proof}
As in the preceding proof, we first classify all Hopf superalgebra maps $\phi: A[\Mr] \rightarrow A[\Mr]$. Let $\phi$ be such a map. Then $\phi$ restricts for each $s \geq 1$ to a homomorphism $\phi_s: A[\Mrs] \rightarrow A[\Mr]$. Since the restriction of $\phi_{s+1}$ to $A[\Mrs]$ must be the same as $\phi_s$, it follows that $\phi = \phi_{(\mu,\ualpha)}$ for some tuple $(\mu,a_0,\ldots,a_{r-1}) \in A^{r+1}$ such that $\mu^2 = a_0^{p^r}$. Conversely, given any tuple $(\mu,\ualpha) \in A^{r+1}$ such that $\mu^2 = a_0^{p^r}$, one can check that the formulas
	\[ \textstyle
	\phi(\tau) = \mu \cdot \tau, \quad \phi(\theta) = \sum_{i=0}^{r-1} a_i \cdot \theta^{p^i}, \quad \text{and} \quad \phi(\sigma_i) = a_0^{ip^{r-1}} \cdot \sigma_i \quad \text{for $i \geq 0$}
	\]
uniquely specify a homomorphism of $A$-Hopf superalgebras $\phi_{(\mu,\ualpha)}: A[\Mr] \rightarrow A[\Mr]$.
\end{proof}

Let $\bfAut(G) \subset \bfHom(G,G)$ be the subfunctor of automorphisms.

\begin{lemma} \label{lemma:automorphisms}
Under the identifications of Proposition \ref{prop:identifyhom} and Corollary \ref{cor:HomMr},
		\begin{align*}
		\bfAut(\Mrone)(k) &= \set{ (\mu,a_0,\ldots,a_{r-1}) \in k^{r+1} : \mu \neq 0 \text{ and } a_0 \neq 0}, \\
		\bfAut(\Mrs)(k) &= \set{ (\mu,a_0,\ldots,a_{r-1},b_s) \in k^{r+2} : \mu^2 = a_0^{p^r} \neq 0} & \text{if $s \geq 2$,} \\
		\bfAut(\Gar)(k) &= \set{ (a_0,\ldots,a_{r-1}) \in k^r : a_0 \neq 0}, \\
		\bfAut(\Gam)(k) &= k - \set{0}, & \text{and} \\
		\bfAut(\Mr)(k) &= \set{ (\mu,a_0,\ldots,a_{r-1}) \in k^{r+1} : \mu^2 = a_0^{p^r} \neq 0}.
		\end{align*}
\end{lemma}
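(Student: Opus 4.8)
The plan is to use that a $k$-supergroup endomorphism $\phi$ of $G$ lies in $\bfAut(G)(k)$ exactly when its comorphism $\phi^*\colon k[G]\to k[G]$ is bijective: the inverse linear map is then automatically a Hopf superalgebra homomorphism, hence the comorphism of an inverse supergroup homomorphism. Since Proposition \ref{prop:identifyhom} and Corollary \ref{cor:HomMr} already describe $\phi^*$ in terms of its parameter tuple, it remains only to decide, in each of the five cases, which tuples give a bijective $\phi^*$.

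For $G$ one of the infinitesimal elementary supergroups $\Gam$, $\Gar$, $\Mrone$, or $\Mrs$ with $s\geq 2$, the superalgebra $k[G]$ is finite-dimensional and local with maximal ideal $\fm=\ker(\ve)$, and $\phi^*$ is a local homomorphism since it preserves the augmentation. I would then invoke the standard criterion that such a $\phi^*$ is bijective if and only if the induced map $\overline{\phi^*}\colon\fm/\fm^2\to\fm/\fm^2$ is surjective: since $\fm$ is nilpotent, surjectivity on $\fm/\fm^2$ forces surjectivity on $\fm$ and hence on $k[G]$, and a surjective endomorphism of a finite-dimensional space is bijective. The map $\overline{\phi^*}$ is then read off from the formulas of Proposition \ref{prop:identifyhom}. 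For $\Gam$ one has $\fm/\fm^2=k\cdot\overline{\tau}$ with $\overline{\phi^*}(\overline{\tau})=\mu\overline{\tau}$, so $\bfAut(\Gam)(k)=k-\set{0}$; for $\Gar$ one has $\fm/\fm^2=k\cdot\overline{\theta}$ with $\overline{\phi^*}(\overline{\theta})=a_0\overline{\theta}$, so the condition is $a_0\neq 0$; for $\Mrone=\Gar\times\Gam$ the space $\fm/\fm^2$ has basis $\overline{\theta},\overline{\tau}$ with $\overline{\phi^*}$ acting by $a_0$ and $\mu$, so the conditions are $a_0\neq 0$ and $\mu\neq 0$; and for $\Mrs$ with $s\geq 2$, one checks from the divided-power relations that $\fm/\fm^2$ has basis $\overline{\tau},\overline{\theta},\overline{\sigma}_p,\dots,\overline{\sigma}_{p^{s-1}}$, and then, using that $\theta^{p^i}\in\fm^2$ for $i\geq 1$ and that $\sigma_1=\theta^{p^{r-1}}\in\fm^2$ when $r\geq 2$, Proposition \ref{prop:identifyhom}\eqref{item:HomMrs} shows $\overline{\phi^*}$ is triangular with diagonal entries $\mu,a_0,a_0^{p^r},a_0^{p^{r+1}},\dots,a_0^{p^{r+s-2}}$, so the conditions are $\mu\neq 0$ and $a_0\neq 0$. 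In this last case the parameter tuples already satisfy $\mu^2=a_0^{p^r}$, so the requirement $\mu\neq 0$ and $a_0\neq 0$ reduces to $\mu^2=a_0^{p^r}\neq 0$, matching the stated formula.

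For $G=\Mr$, whose coordinate superalgebra is not finite-dimensional, I would instead use the exhaustion $k[\Mr]=\bigcup_{s\geq 1}k[\Mrs]$ by finite-dimensional Hopf subsuperalgebras. By Corollary \ref{cor:HomMr} the comorphism $\phi=\phi_{(\mu,\ualpha)}$ maps each $k[\Mrs]$ into itself, and its restriction there is the comorphism of the endomorphism of $\Mrs$ labelled by $(\mu,a_0,\dots,a_{r-1},0)$. A self-map of the directed union $\bigcup_s k[\Mrs]$ that preserves every term is bijective if and only if each restriction is bijective: injectivity passes to the union, surjectivity of the restrictions gives surjectivity on the union, and conversely a bijective self-map restricts to a bijection of each finite-dimensional invariant subspace. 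By the previous paragraph each restriction is bijective exactly when $\mu\neq 0$ and $a_0\neq 0$, i.e.\ when $\mu^2=a_0^{p^r}\neq 0$, so this is the condition defining $\bfAut(\Mr)(k)$.

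The one step that requires genuine work is the determination of $\fm/\fm^2$ for $k[\Mrs]$: confirming that $\sigma_p,\dots,\sigma_{p^{s-1}}$ are minimal algebra generators, and that $\sigma_1$ lies in $\fm^2$ when $r\geq 2$, so that the correction term $b_s\sigma_1$ in $\phi^*(\sigma_{p^{s-1}})$ is invisible modulo $\fm^2$. This is a direct computation with the relations $\sigma_i\sigma_j=\binom{i+j}{i}\sigma_{i+j}$ and Lucas's theorem, together with a little separate bookkeeping when $r=1$ (where $\sigma_1=\theta$); everything else is formal.
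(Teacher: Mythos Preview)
Your proof is correct and takes a genuinely different route from the paper's. The paper treats the case $G=\Mrs$ in detail (leaving the others as similar): for necessity it observes that if $a_0=0$ then the comorphism has a nontrivial kernel, and for sufficiency it builds an explicit left inverse as a composite $\psi\circ\phi_0\circ\phi_{r-1}\circ\cdots\circ\phi_1$ of carefully chosen endomorphisms of $\Mrs$, each one designed to eliminate one coordinate of the parameter tuple (first the $a_i$ for $i\geq 1$, then $\mu$ and $a_0$, then the residual $b_s$-contribution), until the composite is labelled by $(1,1,0,\ldots,0)$. Your argument instead invokes the cotangent-space criterion for a local endomorphism of a finite local algebra and reads off the invertibility condition from a triangular matrix on $\fm/\fm^2$; for $\Mr$ you reduce to the finite pieces via the exhaustion $k[\Mr]=\bigcup_s k[\Mrs]$. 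Your approach is more conceptual and avoids the inductive bookkeeping of composing endomorphisms, while the paper's approach is constructive and actually produces the inverse, which would be useful if one needed the group law on $\bfAut(\Mrs)(k)$ explicitly.
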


\begin{proof}
We sketch the details for the calculation of $\bfAut(\Mrs)(k)$; the other calculations are similar and/or easier. First, the condition $\mu^2 = a_0^{p^r} \neq 0$ is evidently necessary in order for the corresponding homomorphism to be invertible, since otherwise the comorphism $\phi: k[\Mrs] \rightarrow k[\Mrs]$ has a nontrivial kernel. For the reverse set inclusion, let $(\mu,a_0,\ldots,a_{r-1},b_s) \in k^{r+2}$ such that $\mu^2 = a_0^{p^r} \neq 0$, and let $\phi: k[\Mrs] \rightarrow k[\Mrs]$ be the corresponding comorphism. Since $k[\Mrs]$ is a finite-dimensional $k$-vector space, to show that $\phi$ is invertible it suffices to show that $\phi$ admits a left inverse. For $1 \leq i \leq r-1$, let $\phi_i: k[\Mrs] \rightarrow k[\Mrs]$ be the homomorphism labeled by
	\[
	(1,1,0,\ldots,-a_0^{-1}a_i,0,\ldots,0) \in k^{r+2},
	\]
where $-a_0^{-1}a_i$ appears in the $(i+1)$-st coordinate. Now by induction on $i$, one can check that
	\[
	\phi_i \circ \phi_{i-1} \circ \cdots \circ \phi_1 \circ \phi : k[\Mrs] \rightarrow k[\Mrs]
	\]
is labeled by a tuple of the form $(\mu,a_0,0,\ldots,0,a_{i+1}',\ldots,a_{r-1}',b_s')$ for some $a_{i+1}',\ldots,a_{r-1}',b' \in k$. In particular, $\phi_{r-1} \circ \cdots \circ \phi_1 \circ \phi$ is labeled by $(\mu,a_0,0,\ldots,0,b')$ for some $b' \in k$. Next let $\phi_0: k[\Mrs] \rightarrow k[\Mrs]$ be the map labeled by $(\mu^{-1},a_0^{-1},0,\ldots,0)$. Then one can check that $\phi_0 \circ \phi_{r-1} \circ \cdots \circ \phi_1 \circ \phi$ is labeled by $(1,1,0,\ldots,0,b'a_0^{-p^{r-1}})$. Finally, let $\psi: k[\Mrs] \rightarrow k[\Mrs]$ be the map labeled by $(1,1,0,\ldots,0,-b'a_0^{-r^{r-1}})$. Then $\psi \circ \phi_0 \circ \phi_{r-1} \circ \cdots \circ \phi_1 \circ \phi$ is labeled by $(1,1,0,\ldots,0)$, and hence is the identity map. Thus, $\psi \circ \phi_0 \circ \phi_{r-1} \circ \cdots \circ \phi_1$ is a left inverse for $\phi$.
\end{proof}

\subsection{Commuting nilpotent supermatrices} \label{subsection:nilpotentsupermatrices}

Given nonnegative integers $m$ and $n$, the $k$-super\-functor $\bsV_r(\GLmn) : \csalg_k \rightarrow \mathfrak{sets}$ is defined by
\begin{align*}
\bsV_r(\GLmn)(A) = \Big \{ (\alpha_0,\ldots,\alpha_{r-1},\beta) &\in (\Matmn(A)_{\zero})^{\times r} \times \Matmn(A)_{\one} : \\
&[\alpha_i,\alpha_j] = [\alpha_i,\beta] = 0 \text{ for all $0 \leq i,j \leq r-1$}, \\
&\alpha_i^p = 0 \text{ for all $0 \leq i \leq r-2$}, \text{ and } \alpha_{r-1}^p + \beta^2 = 0 \Big \}.
\end{align*}

\begin{definition}
For $m,n \in \N$, let $\bsNr(\GLmn)$ be the subfunctor of $\bsV_r(\GLmn)$ defined by
	\[
	\bsNr(\GLmn)(A) = \set{ (\alpha_0,\ldots,\alpha_{r-1},\beta) \in \bsV_r(\GLmn)(A) : \alpha_{r-1} \text{ is nilpotent}}.
	\]
\end{definition}

Given a commutative superalgebra $A \in \csalg_k$, let
	\[
	A\Mr = \Hom_A(A[\Mr],A) = \Hom_A( k[\Mr] \otimes_k A,A) \cong \Hom_k(k[\Mr],A)
	\]
be the group algebra of the $A$-supergroup scheme $\Mr \otimes_k A$. Then $A\Mr$ admits the identification
	\[
	A\Mr = A[[u_0,\ldots,u_{r-1},v]]/\subgrp{u_0^p,\ldots,u_{r-2}^p,u_{r-1}^p+v^2}.
	\]
The next result was stated, with a different proof, in \cite[Proposition 3.3.5]{Drupieski:2017a}. The proof given below is in the spirit of the proof of \cite[Proposition 1.2]{Suslin:1997}. The upshot of the new proof is that we get, in Remark \ref{remark:exp}, a description of homomorphisms in terms of exponential maps entirely analogous to the classical description given in \cite[Remark 1.3]{Suslin:1997a}.

\begin{proposition} \label{proposition:HomMrGLmn}
Let $m,n \in \N$. Then for each $A \in \csalg_k$, there exists a natural identification
\[
\bsNr(\GLmn)(A) = \bfHom(\Mr,\GLmn)(A), \quad (\ualpha|\beta) \mapsto \rho_{(\ualpha|\beta)}.
\]
Given $(\ualpha|\beta) := (\alpha_0,\ldots,\alpha_{r-1},\beta) \in \bsN_r(\GLmn)(A)$, the corresponding supergroup homomorphism $\rho_{(\ualpha|\beta)}: \Mr \otimes_k A \rightarrow \GLmn \otimes_k A$ is such that the induced action of the group algebra $A\Mr$ on $A^{m|n}$ is defined by having the generator $u_i \in k\Mr \subset A\Mr$ act via the matrix $\alpha_i \in \Matmn(A)_{\zero}$, and by having the generator $v \in k\Mr \subset A\Mr$ act via the matrix $\beta \in \Matmn(A)_{\one}$.
\end{proposition}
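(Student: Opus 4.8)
The plan is to set up a dictionary between supergroup homomorphisms $\rho: \Mr \otimes_k A \to \GLmn \otimes_k A$ and certain tuples of matrices, via the group algebra. Recall that giving a homomorphism $\rho$ of affine $A$-supergroup schemes is equivalent to giving a homomorphism of $A$-Hopf superalgebras $\rho^*: A[\GLmn] \to A[\Mr]$, and dually (since $\GLmn$ is algebraic and $k\Mr$ is the continuous dual of $k[\Mr]$) to giving a homomorphism of $A$-Hopf superalgebras $\rho_*: A\GLmn \to A\Mr$ on group algebras, where $A\GLmn = \Dist(\GLmn) \otimes_k A$ in the appropriate completed sense. Because $A^{m|n}$ is a faithful $A\GLmn$-supermodule, specifying such a $\rho_*$ is the same as specifying a homomorphism of $A$-superalgebras $A\Mr \to \End_A(A^{m|n}) = \Matmn(A)$ that is compatible with the coalgebra structures, i.e., making $A^{m|n}$ into an $A\Mr$-supermodule whose structure is "group-like" in the sense of coming from a supergroup homomorphism.

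First I would use the presentation $k\Mr = k[[u_0,\ldots,u_{r-1},v]]/\langle u_0^p,\ldots,u_{r-2}^p,u_{r-1}^p+v^2\rangle$ from \eqref{eq:kMr}. A superalgebra homomorphism out of $A\Mr$ (continuous with respect to the augmentation filtration, which is automatic since the target is finite-dimensional over $A$ and $A\Mr$ is complete local) is determined by the images of $u_0,\ldots,u_{r-1},v$, subject only to the relations: the images $\alpha_i := \rho_*(u_i) \in \Matmn(A)_{\zero}$ must commute pairwise and satisfy $\alpha_i^p = 0$ for $i \le r-2$, the image $\beta := \rho_*(v) \in \Matmn(A)_{\one}$ must commute with all the $\alpha_i$, and $\alpha_{r-1}^p + \beta^2 = 0$. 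These are exactly the defining conditions of $\bsV_r(\GLmn)(A)$. So the content is: (i) every such tuple extends to a superalgebra map, which is clear from the presentation; (ii) the resulting superalgebra map is automatically a coalgebra map, hence comes from a genuine supergroup homomorphism; and (iii) the extra nilpotence condition "$\alpha_{r-1}$ nilpotent" exactly cuts out the homomorphisms, i.e., is automatic, forcing the identification to land in $\bsNr(\GLmn)(A)$ rather than merely $\bsV_r(\GLmn)(A)$.

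For step (ii) I would argue as in the proof of \cite[Proposition 1.2]{Suslin:1997}: the key point is that $u_0,\ldots,u_{r-2}$ and $u_{r-1}^p$ and $v$ are primitive in $\Pr$ (stated in the excerpt), and a superalgebra map $k\Mr \to \Matmn(A)$ sending primitive generators to the tangent/Lie data determines, via an exponential-type formula, a grouplike-valued point of $\GLmn \otimes A[\Mr]$, which is precisely a supergroup homomorphism; one then checks the comorphism $\rho^*$ is a Hopf map by comparing coproducts on generators. This is where Remark \ref{remark:exp} (promised right after the statement) will record the explicit exponential description, so I would carry out the computation there in detail; in the proof itself I would just note that the action of the group algebra generators as described forces the comorphism to respect coproducts because the relevant elements are primitive. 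For step (iii): given a homomorphism $\rho$, its image lies in an infinitesimal subgroup scheme of $\GLmn \otimes A$ (since $\Mr$ is "infinitesimal-like" — its group algebra is local), and more concretely the element $u_{r-1} \in k\Mr$ is nilpotent (indeed $u_{r-1}^{2p}=0$ since $u_{r-1}^p = -v^2$ and $v^2$ is a square of an odd element hence $u_{r-1}^{2p} = v^4 = 0$ using $v^2$ even and $(v^2)^2 = 0$ — wait, more carefully: $u_{r-1}^p = -v^2$ and $u_{r-1}^{p+1} = -u_{r-1}v^2 = -v^2 u_{r-1}$, and $u_{r-1}^{2p} = v^4$; since $v$ is odd, $v^2$ need not be zero but $v$ lives in a finite-dimensional local algebra so $v$ is nilpotent, hence $u_{r-1}$ is nilpotent in $\Pr$), so $\alpha_{r-1} = \rho_*(u_{r-1})$ is a nilpotent matrix. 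Conversely any tuple in $\bsNr(\GLmn)(A)$ has all of $\alpha_0,\ldots,\alpha_{r-1},\beta$ nilpotent (the $\alpha_i$ for $i<r-1$ are $p$-nilpotent; $\alpha_{r-1}$ is nilpotent by hypothesis; $\beta^2 = -\alpha_{r-1}^p$ is then nilpotent so $\beta$ is), which is what makes the superalgebra map $k\Mr \to \Matmn(A)$ extend continuously from $\Pr$ to the completion $k\Mr$ and guarantees it is well-defined on the power series ring.

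The main obstacle I anticipate is step (ii): verifying cleanly, and in the correct generality over an arbitrary commutative superalgebra $A$ (not assumed reduced or purely even, in contrast to Proposition \ref{prop:identifyhom}), that a superalgebra homomorphism on group algebras sending the distinguished primitive generators to commuting (super)nilpotent matrices genuinely integrates to a supergroup scheme homomorphism and not merely a coalgebra-incompatible algebra map. Following Suslin--Friedlander--Bendel, the remedy is the exponential construction: from the matrices $\alpha_i,\beta$ one writes down an explicit grouplike element of $\GLmn(A[\Mr])$ — morally $\exp(\sum \theta^{p^i}\text{-twists of }\alpha_i + \tau\text{-twist of }\beta + \sigma\text{-twists})$ — whose associated comorphism is then manifestly a Hopf algebra map by the binomial/divided-power identities satisfied by the $\sigma_i$; the matching with the group-algebra action recovers $u_i \mapsto \alpha_i$, $v \mapsto \beta$. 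Packaging this exponential formula is exactly the purpose of Remark \ref{remark:exp}, so in practice I would prove naturality and bijectivity here at the level of "superalgebra maps $A\Mr \to \Matmn(A)$ respecting augmentation $=$ tuples in $\bsNr(\GLmn)(A)$," and defer the explicit Hopf/exponential verification to that remark, exactly as the paragraph preceding the proposition advertises.
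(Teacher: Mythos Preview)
Your approach via the group algebra is dual to the paper's, but step (iii) contains a genuine error that breaks the argument. You claim $u_{r-1}$ is nilpotent in $k\Mr$ (or in $\Pr$), reasoning that ``$v$ lives in a finite-dimensional local algebra so $v$ is nilpotent.'' This is false: $\Pr$ and $k\Mr$ are \emph{infinite}-dimensional. Indeed $u_{r-1}^{np} = (-1)^n v^{2n}$, and these elements are nonzero for all $n$ (they are dual to the $\sigma_{np}$, which form an infinite family in $k[\Mr]$). So there is no a priori nilpotence to transport to $\alpha_{r-1}$, and your forward implication ``supergroup homomorphism $\Rightarrow \alpha_{r-1}$ nilpotent'' has no argument. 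This matters: the gap between $\bsVr(\GLmn)$ and $\bsNr(\GLmn)$ is exactly the content of the proposition beyond \cite[Proposition~3.3.5]{Drupieski:2017a}.

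The paper avoids this by working on the comodule side rather than the group-algebra side. A homomorphism $\rho$ is the same as an $A[\Mr]$-supercomodule structure $\Delta_\rho: A^{m|n} \to A^{m|n} \otimes_k k[\Mr]$, and one expands $\Delta_\rho(w) = \sum_{i,j} \alpha_{ij}(w)\otimes \theta^i\sigma_j + \beta_{ij}(w)\otimes \tau\theta^i\sigma_j$. The comodule axioms force divided-power relations among the $\alpha_{ij},\beta_{ij}$, and in particular $\alpha_{0,j} = (\alpha_{0,1})^j$. The nilpotence of $\alpha_{r-1} := \alpha_{0,1}$ then drops out for free from the \emph{finiteness} of the comodule map: for each $w$ the sum $\Delta_\rho(w)$ is finite, so $\alpha_{0,j}(w)=0$ for $j\gg 0$, and since $A^{m|n}$ is finitely generated this gives $(\alpha_{0,1})^N = 0$ for some $N$. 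In your language, this is the rationality of the $\Mr$-module structure (cf.\ the discussion before Proposition~\ref{proposition:rationaltoarbitrary}): not every $k\Mr$-module is a rational $\Mr$-module, only those on which $u_{r-1}$ acts locally nilpotently. Your steps (i)--(ii) can be made to work for the converse direction once $\alpha_{r-1}$ is assumed nilpotent (this is where the exponential formula of Remark~\ref{remark:exp} is relevant), but for the forward direction you need the comodule-finiteness argument or something equivalent.
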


\begin{proof}
Specifying a homomorphism of $A$-supergroup schemes $\rho: \Mr \otimes A \rightarrow \GLmn \otimes A$ is equivalent to defining a rational $\Mr \otimes A$-supermodule structure on the superspace $\Amn$, which is in turn equivalent to defining a right $A[\Mr]$-super\-comodule structure on $\Amn$. So let
\[
\Delta_\rho: \Amn \rightarrow \Amn \otimes_A A[\Mr] = \Amn \otimes_k k[\Mr]
\]
be the super\-comodule structure map corresponding to $\rho$. Then for $w \in \Amn$, we can write
\begin{equation} \label{eq:Deltarho}
\Delta_\rho(w) = \sum_{i=0}^{p^{r-1}-1} \sum_{j \geq 0} \left[\alpha_{ij}(w) \otimes  \theta^i \sigma_j + \beta_{ij}(w) \otimes \tau \theta^i \sigma_j\right]
\end{equation}
for some $k$-linear maps $\alpha_{ij},\beta_{ij} : \Amn \rightarrow \Amn$. Define $\wt{\beta}_{ij}: \Amn \rightarrow \Amn$ by $\wt{\beta}_{ij}(w) = (-1)^{\ol{w}}\beta_{ij}(w)$. Then it immediately follows that $\alpha_{ij} \in\Matmn(A)_{\zero}$ and $\wt{\beta}_{ij} \in \Matmn(A)_{\one}$. (If $A$ is a purely even $k$-super\-algebra, then $\beta_{ij}$ is already an element of $\Matmn(A)_{\one}$.)

Write $\Delta_{A[\Mr]}$ and $\ve$ for the coproduct and counit on $A[\Mr]$, respectively. Then the comodule axiom $(1 \otimes \ve) \circ \Delta_\rho = \id_{\Amn}$ implies that $\alpha_{0,0} = \id_{\Amn}$, while the axiom $(1 \otimes \Delta_{A[\Mr]}) \circ \Delta_\rho = (\Delta_\rho \otimes 1) \circ \Delta_\rho$ implies the following identities:
	\begin{align*}
	\alpha_{ij} \circ \alpha_{st} &= \binom{i+s}{i} \alpha_{i+s,j+t} \text{ if $i+s < p^{r-1}$}, & \alpha_{ij} \circ \alpha_{st} &=0 \text{ if $i+s \geq p^{r-1}$}, \\
	\beta_{ij} \circ \beta_{st} &= \binom{i+s}{i} \alpha_{i+s,j+t+p}, \text{ if $i+s < p^{r-1}$}, & \beta_{ij} \circ \beta_{st} &= 0 \text{ if $i+s \geq p^{r-1}$}, \\
	\alpha_{ij} \circ \beta_{st} &= \binom{i+s}{i} \beta_{i+s,j+t} \text{ if $i+s < p^{r-1}$}, & \alpha_{ij} \circ \beta_{st} &=0 \text{ if $i+s \geq p^{r-1}$}, \\
	\beta_{st} \circ \alpha_{ij} &= \binom{i+s}{i} \beta_{i+s,j+t} \text{ if $i+s < p^{r-1}$}, & \beta_{st} \circ \alpha_{ij} &=0 \text{ if $i+s \geq p^{r-1}$}.
	\end{align*}
In particular, the $\alpha_{ij}$ and $\beta_{ij}$ all commute pairwise, $(\alpha_{ij})^p = 0$ if $i \neq 0$, $\alpha_{0,j} = (\alpha_{0,1})^j$, and $(\beta_{0,0})^2 = \alpha_{0,p}$. Also, for each fixed $w \in \Amn$ the sum \eqref{eq:Deltarho} is finite, so $\alpha_{ij}(w) = 0$ for all but finitely many $j$. Then varying $w$ over an $A$-supermodule basis for $\Amn$, this implies that $\alpha_{ij} = 0$ for all but finitely many $j$, and hence that $(\alpha_{0,1})^N = \alpha_{0,N} = 0$ for some $N \geq 0$.

	
Now set $\alpha_i = \alpha_{p^i,0}$ for $0 \leq i \leq r-2$, set $\alpha_{r-1} = \alpha_{0,1}$, and set $\beta = -\wt{\beta}_{0,0}$. Then $\beta^2 = (\wt{\beta}_{0,0})^2 = -(\beta_{0,0})^2$, and the observations of the previous paragraph imply that the tuple $(\alpha_0,\ldots,\alpha_{r-1},\beta)$ is an element of $\bsV_r(\GLmn)(A)$ such that $\alpha_{r-1}$ is nilpotent. Conversely, let $(\alpha_0,\ldots,\alpha_{r-1},\beta)$ be such a tuple. Define $\beta_{0,0}: \Amn \rightarrow \Amn$ by $\beta_{0,0}(w) = -(-1)^{\ol{w}} \beta(w)$, i.e., so that $\beta = -\wt{\beta}_{0,0}$, and given an integer $0 \leq i < p^{r-1}$ with $p$-adic decomposition $i = i_0 + i_1 p + \cdots +i_{r-2} p^{r-2}$, set
\[
\alpha_{ij} = \frac{(\alpha_0)^{i_0}(\alpha_1)^{i_1} \cdots (\alpha_{r-2})^{i_{r-2}}}{(i_0)! (i_1)! \cdots (i_{r-2})!} (\alpha_{r-1})^j \quad \text{and} \quad \beta_{ij} = \alpha_{ij} \cdot \beta_{0,0}.
\]
Then with these definitions, \eqref{eq:Deltarho} defines an $A[\Mr]$-super\-comodule structure on $\Amn$. Thus, the elements of $\bfHom(\Mr,\GLmn)(A)$ correspond bijectively to the elements of $\bsV_r(\GLmn)(A)$ such that $\alpha_{r-1}$ is nilpotent. Finally, the action of an element $\phi \in A\Mr$ on an element $w \in \Amn$ is defined by $\phi \cdot w = (1 \otimes_A \phi)(\Delta_\rho(w))$. Then the description of the action of $A\Mr$ on $\Amn$ immediately follows because $u_i \in k[\Mr]^\#$ is defined to be the functional that is linearly dual to the basis element $\theta^{p^i} \in k[\Mr]$, and $v \in k[\Mr]^\#$ is defined to be linearly dual to $\tau \in k[\Mr]$.
\end{proof}

\begin{remark} \label{remark:exp}
Recall that the right action of $A$ on $\Matmn(A) = \Hom_A(\Amn,\Amn)$ is defined by $(\phi \cdot a)(w) = (-1)^{\ol{a} \cdot \ol{w}}\phi(w) \cdot a$. In particular, if $\tau \in B_{\one}$ for some commutative $A$-superalgebra $B \in \csalg_A$, and if $\beta = -\wt{\beta}_{0,0} \in \Matmn(A)_{\one}$ as in the proof, then (via the homomorphism $A \rightarrow B$ making $B$ into an $A$-superalgebra) one has $\beta \cdot \tau \in \Matmn(B)_{\zero}$. Note that while $\beta^2$ may not equal zero, $(\beta \cdot \tau)^2 = 0$ because $\tau^2 = 0$ (by the supercommutativity of $B$). For $w \in B^{m|n}$, one has
\[
(-\beta \cdot \tau)(w) = (-1)^{\ol{w}} (-\beta)(w) \cdot \tau = \beta_{0,0}(w) \cdot \tau.
\]

The preceding discussion and the proof of the proposition now imply the following explicit description for the homomorphism $\rho_{(\ualpha|\beta)} : \Mr \otimes_k A \rightarrow \GLmn \otimes_k A$ corresponding to an element $(\ualpha|\beta) \in \bsN_r(\GLmn)(A)$. Let $B \in \csalg_A$. For $\phi \in \Matmn(B)_{\zero}$, set
\[
\exp(\phi) = 1 + \phi + \frac{1}{2!} \phi^2 + \cdots + \frac{1}{(p-1)!}\phi^{p-1} \in \Matmn(B)_{\zero}.
\]
Then for $g = (\tau,\theta,\sigma_1,\sigma_2,\ldots,) \in \Mr(B) = (\Mr \otimes_k A)(B)$ (cf.\ the shorthand of \cite[\S3.1]{Drupieski:2017a} for denoting an element of $\Mr(B)$), one has
\begin{equation} \label{eq:rhoalphabeta} \textstyle
\rho_{(\ualpha|\beta)}(g) = \left( \prod_{i=0}^{r-1} \exp(\alpha_i \cdot \theta^{p^i}) \right) \cdot \exp(-\beta \cdot \tau) \cdot \left( \prod_{i \geq 1} \exp(\alpha_{r-1}^{p^i} \cdot \sigma_{p^i}) \right) \in \GLmn(B).
\end{equation}
This expression is well-defined, independent of the ordering of the factors, by the assumption that the entries of $(\ualpha|\beta)$ commute pairwise and $\alpha_{r-1}$ is nilpotent. If $\alpha_{r-1}^{p^s} = 0$, then \eqref{eq:rhoalphabeta} also describes the element of $\bfHom(\Mrs,\GLmn)(A)$ corresponding to $(\ualpha|\beta)$ as in \cite[Proposition 3.3.5]{Drupieski:2017a}.
\end{remark}

Recall from \cite[Proposition 3.3.5]{Drupieski:2017a} that $\bfHom(\Mrs,\GLmn)$ identifies with the closed subfunctor $\bsVrs(\GLmn)$ of $\bsVr(\GLmn)$, consisting of those tuples $(\alpha_0,\ldots,\alpha_{r-1},\beta) \in \bsVr(\GLmn)$ such that $\alpha_{r-1}^{p^s} = 0$. More generally, if $G$ is an algebraic $k$-super\-group scheme and if $G \hookrightarrow \GLmn$ is a closed embedding, then the proof of \cite[Theorem 3.3.7]{Drupieski:2017a} shows that $\bfHom(\Mrs,G)$ identifies with a closed subfunctor of $\bfHom(\Mrs,\GLmn)$. 

For each $s \geq 1$, the canonical quotient homomorphisms $\Mr \twoheadrightarrow \M_{r;s+1} \twoheadrightarrow \Mrs$ arising from the subalgebra inclusions $k[\Mrs] \hookrightarrow k[\M_{r;s+1}] \hookrightarrow k[\Mr]$ induce natural transformations
	\[
	\bfHom(\Mrs,G) \hookrightarrow \bfHom(\M_{r;s+1},G) \hookrightarrow \bfHom(\Mr,G).
	\]
In this way, the $\bfHom(\Mrs,G)$ for $s \geq 1$ form a directed system of subfunctors of $\bfHom(\Mr,G)$. The first part of the next proposition is then another interpretation of \cite[Proposition 3.3.5]{Drupieski:2017a}, while the second part follows from the observation \cite[Remark 3.1.3(4)]{Drupieski:2017a} that if $G$ is algebraic, then any $A$-super\-group scheme homomorphism $\rho: \Mr \otimes_k A \rightarrow G \otimes_k A$ necessarily factors for all sufficiently large $s$ through the canonical quotient map $\Mr \otimes_k A \twoheadrightarrow \Mrs \otimes_k A$.

\begin{lemma}
The functor $\bfHom(\Mr,\GLmn)$ is the union in $\bsVr(\GLmn)$ of the closed sub\-functors $\bfHom(\Mrs,\GLmn)$ for $s \geq 1$. In other words, as subfunctors of $\bsVr(\GLmn)$,
	\[
	\bfHom(\Mr,\GLmn) = \bigcup_{s \geq 1} \bfHom(\Mrs,\GLmn) = \bigcup_{s \geq 1} \bsVrs(\GLmn) = \bsNr(\GLmn)
	\]
More generally, let $G$ be an algebraic $k$-supergroup scheme, and fix a closed embedding $G \hookrightarrow \GLmn$ for some $m,n \in \N$. Then identifying $\bfHom(\Mr,G)$ and each $\bfHom(\Mrs,G)$ with a subfunctor of $\bfHom(\Mr,\GLmn)$ via the embedding, one has $\bfHom(\Mr,G) = \bigcup_{s \geq 1} \bfHom(\Mrs,G)$.
\end{lemma}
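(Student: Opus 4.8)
The plan is to obtain the displayed chain of equalities by combining Proposition \ref{proposition:HomMrGLmn} with \cite[Proposition 3.3.5]{Drupieski:2017a} and one elementary observation about nilpotent matrices, and then to deduce the statement for a general algebraic $G$ by a factorization argument resting on \cite[Remark 3.1.3(4)]{Drupieski:2017a}.

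For the first chain, recall that Proposition \ref{proposition:HomMrGLmn} identifies $\bfHom(\Mr,\GLmn)$ with $\bsNr(\GLmn)$ inside $\bsVr(\GLmn)$, while \cite[Proposition 3.3.5]{Drupieski:2017a} identifies $\bfHom(\Mrs,\GLmn)$ with the closed subfunctor $\bsVrs(\GLmn)$ of $\bsVr(\GLmn)$ cut out by the condition $\alpha_{r-1}^{p^s} = 0$. Since $\alpha^{p^s} = 0$ forces $\alpha^{p^{s+1}} = 0$, the subfunctors $\bsVrs(\GLmn)$, and hence also the $\bfHom(\Mrs,\GLmn)$, form a nested (directed) family, so their union as subfunctors of $\bsVr(\GLmn)$ is computed valuewise. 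For a fixed $A \in \csalg_k$ and a tuple $(\alpha_0,\ldots,\alpha_{r-1},\beta) \in \bsVr(\GLmn)(A)$, the matrix $\alpha_{r-1} \in \Matmn(A)_{\zero}$ is nilpotent if and only if $\alpha_{r-1}^{p^s} = 0$ for some $s \ge 1$ (if $\alpha_{r-1}^{N} = 0$, take any $s$ with $p^s \ge N$). Therefore $\bigcup_{s \ge 1}\bsVrs(\GLmn)(A) = \bsNr(\GLmn)(A)$ for every $A$, which yields all four equalities.

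For the final statement, fix a closed embedding $G \hookrightarrow \GLmn$ and $A \in \csalg_k$, and regard $\bfHom(\Mr,G)(A)$ and each $\bfHom(\Mrs,G)(A)$ as subsets of $\bfHom(\Mr,\GLmn)(A)$, the latter via the canonical quotient $\Mr \otimes_k A \twoheadrightarrow \Mrs \otimes_k A$. The inclusion $\bigcup_{s \ge 1}\bfHom(\Mrs,G)(A) \subseteq \bfHom(\Mr,G)(A)$ is immediate. For the reverse inclusion, given $\rho \in \bfHom(\Mr,G)(A)$ I would pass to its comorphism $\rho^* \colon A[G] \to A[\Mr]$, an $A$-Hopf superalgebra map. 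Since $G$ is algebraic, $A[G] = k[G] \otimes_k A$ is finitely generated as an $A$-algebra; since $k[\Mr] = \bigcup_{s \ge 1} k[\Mrs]$ with each $k[\Mrs]$ a finite-dimensional Hopf subsuperalgebra of $k[\Mr]$, the images under $\rho^*$ of a finite $A$-algebra generating set of $A[G]$ all lie in $A[\Mrs] := k[\Mrs] \otimes_k A$ for all $s$ large enough. Hence $\rho^*$ factors as $A[G] \to A[\Mrs] \hookrightarrow A[\Mr]$, and because $A[\Mrs]$ is a Hopf subsuperalgebra the first arrow is again a Hopf superalgebra homomorphism; dualizing shows that $\rho$ factors through $\Mr \otimes_k A \twoheadrightarrow \Mrs \otimes_k A$, i.e.\ $\rho \in \bfHom(\Mrs,G)(A)$. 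This is precisely the content of \cite[Remark 3.1.3(4)]{Drupieski:2017a}.

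I do not expect any real obstacle here: the substantive inputs --- the supermatrix descriptions of $\bfHom(\Mr,\GLmn)$ and $\bfHom(\Mrs,\GLmn)$, and the finite generation of $k[G]$ forcing a factorization --- are already in hand. The only points needing a word of care are that the union of the nested subfunctors is computed valuewise, and that restricting the codomain of the algebra map $\rho^*$ to the Hopf subsuperalgebra $A[\Mrs]$ automatically produces a Hopf superalgebra map, so that the factorization genuinely takes place among supergroup scheme homomorphisms.
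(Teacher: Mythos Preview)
Your proposal is correct and follows essentially the same approach as the paper: the paper's proof is just the sentence preceding the lemma, which cites \cite[Proposition 3.3.5]{Drupieski:2017a} for the first chain of equalities and \cite[Remark 3.1.3(4)]{Drupieski:2017a} for the factorization in the general case. You have simply unpacked these references and supplied the elementary observation that $\alpha_{r-1}$ is nilpotent if and only if $\alpha_{r-1}^{p^s}=0$ for some $s$.
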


The functor $\bsNr(\GLmn)$ is not a closed subfunctor of $\bsVr(\GLmn)$ because there is no set of polynomial equations that simultaneously captures, for all coefficient rings $A \in \csalg_k$, the property that $\alpha_{r-1}$ is nilpotent. But by restricting attention to field coefficients we get:

\begin{lemma} \label{lem:HomMrGvariety}
Let $G$ be an algebraic $k$-supergroup scheme, let $G \hookrightarrow \GLmn$ be a closed embedding, and set $N = \max(m,n)$. Then for any field extension $K$ of $k$, the unions
	\begin{align*}
	\bfHom(\Mr,\GLmn)(K) &= \textstyle \bigcup_{s \geq 1} \bfHom(\Mrs,\GLmn)(K), \quad \text{and} \\
	\bfHom(\Mr,G)(K) &= \textstyle \bigcup_{s \geq 1} \bfHom(\Mrs,G)(K)
	\end{align*}
reach a stable value at $s = N$. In particular, if $k$ is algebraically closed, then $\bsNr(\GLmn)(k)$ is a Zariski closed subset of the affine algebraic variety $\bsVr(\GLmn)(k)$, and $\bfHom(\Mr,G)(k)$ identifies via the embedding $G \hookrightarrow \GLmn$ with a closed subvariety of $\bsNr(\GLmn)(k)$.
\end{lemma}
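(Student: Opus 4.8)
The plan is to reduce the whole statement to the elementary observation that a nilpotent even supermatrix of size $m|n$ is annihilated by its $p^{N}$-th power, and then to transport that bound along the closed embedding $G \hookrightarrow \GLmn$ by a comorphism argument. Throughout, for a field extension $K/k$ the sets $\bfHom(\Mrs,\GLmn)(K)$ (resp.\ $\bfHom(\Mrs,G)(K)$) increase with $s$ and have union $\bfHom(\Mr,\GLmn)(K) = \bsNr(\GLmn)(K)$ (resp.\ $\bfHom(\Mr,G)(K)$) by the preceding lemma, so it suffices to prove $\bfHom(\M_{r;N},\GLmn)(K) = \bfHom(\Mr,\GLmn)(K)$ and $\bfHom(\M_{r;N},G)(K) = \bfHom(\Mr,G)(K)$.

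First I would treat $\GLmn$. By Proposition \ref{proposition:HomMrGLmn}, an element of $\bfHom(\Mr,\GLmn)(K)$ is a tuple $(\ualpha|\beta) \in \bsNr(\GLmn)(K)$; in particular $\alpha_{r-1} \in \Matmn(K)_{\zero}$ is nilpotent. Since $\alpha_{r-1}$ is an even supermatrix it is block diagonal, with an $m\times m$ block and an $n\times n$ block, each nilpotent, so $\alpha_{r-1}^{N} = 0$ and hence $\alpha_{r-1}^{p^{N}} = 0$ (using $p^{N} \geq N$). Thus $(\ualpha|\beta)$ already lies in $\bsV_{r;N}(\GLmn)(K) = \bfHom(\M_{r;N},\GLmn)(K)$, which gives $\bfHom(\M_{r;N},\GLmn)(K) = \bfHom(\Mr,\GLmn)(K)$.

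Next I would descend to $G$. The point is that a homomorphism $\rho: \Mr \otimes_k K \to G \otimes_k K$ factors through the canonical quotient $\pi_{r;s}: \Mr \otimes_k K \twoheadrightarrow \Mrs \otimes_k K$ exactly when the comorphism $\rho^*: K[G] \to K[\Mr]$ has image inside the defining subalgebra $K[\Mrs] \subseteq K[\Mr]$ (the resulting map $K[G] \to K[\Mrs]$ is then automatically a Hopf homomorphism, since $K[\Mrs] \hookrightarrow K[\Mr]$ is an injective Hopf map). Now fix a closed embedding $\iota: G \hookrightarrow \GLmn$, so that $\iota^*: K[\GLmn] \twoheadrightarrow K[G]$ is surjective and therefore $(\iota\circ\rho)^* = \rho^* \circ \iota^*$ has the same image as $\rho^*$. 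By the previous paragraph, $\iota\circ\rho \in \bfHom(\M_{r;N},\GLmn)(K)$, i.e.\ $\im(\rho^*\circ\iota^*) \subseteq K[\M_{r;N}]$; hence $\im(\rho^*) \subseteq K[\M_{r;N}]$, so $\rho \in \bfHom(\M_{r;N},G)(K)$. This yields $\bfHom(\M_{r;N},G)(K) = \bfHom(\Mr,G)(K)$, and both unions are stable at $s = N$.

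For the final assertions, take $k$ algebraically closed. Then $\bsNr(\GLmn)(k) = \bfHom(\Mr,\GLmn)(k) = \bsV_{r;N}(\GLmn)(k)$, which is Zariski closed in the affine variety $\bsVr(\GLmn)(k)$ because $\bsV_{r;N}(\GLmn)$ is the closed subfunctor of $\bsVr(\GLmn)$ cut out by the polynomial equations $\alpha_{r-1}^{p^{N}} = 0$ on matrix entries. Likewise $\bfHom(\Mr,G)(k) = \bfHom(\M_{r;N},G)(k)$, which by the proof of \cite[Theorem 3.3.7]{Drupieski:2017a} is the set of $k$-points of a closed subfunctor of $\bfHom(\M_{r;N},\GLmn) = \bsV_{r;N}(\GLmn)$, hence a closed subvariety of $\bsNr(\GLmn)(k)$. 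The only step carrying genuine content is the nilpotency bound in the second paragraph; the point needing care is the direction of the factorizations in the third paragraph, where surjectivity of $\iota^*$ (equivalently, that $\iota$ is a closed embedding) is exactly what lets one pull the factorization through $\pi_{r;N}$ back from $\GLmn$ to $G$. I do not anticipate a real obstacle.
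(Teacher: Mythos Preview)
Your proof is correct and follows essentially the same approach as the paper. Both arguments hinge on the observation that a nilpotent even supermatrix over a field $K$ satisfies $\alpha_{r-1}^N = 0$ (you via the block-diagonal structure, the paper via Jordan form), and both invoke \cite[Theorem 3.3.7]{Drupieski:2017a} for the closed-subvariety conclusions; your comorphism argument using surjectivity of $\iota^*$ spells out the descent from $\GLmn$ to $G$ that the paper leaves implicit.
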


\begin{proof}
By \cite[Theorem 3.3.7]{Drupieski:2017a}, the functor $\bsVr(\GLmn)$ admits the structure of an affine superscheme of finite type over $k$. Let $A = k[\bsVr(\GLmn)]$ be the coordinate superalgebra of this affine superscheme, and let $A_{red}$ be the largest (purely even) reduced quotient of $A$.\footnote{Since $A$ is a commutative superalgebra, its odd elements are automatically nilpotent, and hence the largest reduced quotient of $A$ will automatically be a purely even $k$-algebra.} Then $A_{red}$ is a finitely-generated commutative (in the ordinary sense) $k$-algebra, and because the field $k$ is a purely even reduced $k$-algebra,
	\[
	\bsVr(\GLmn)(k) = \Hom_{\salg}(A,k) = \Hom_{\alg}(A_{red},k).
	\]
In particular, if $k$ is algebraically closed then $\bsVr(\GLmn)(k)$ admits the structure of an affine algebraic variety with coordinate algebra $A_{red}$.

Next consider the defining property of $\bsNr(\GLmn)(K)$ that $\alpha_{r-1}$ is nilpotent. Since the matrix coefficients lie in the field $K$, it follows (e.g., from considering Jordan canonical forms, perhaps over some algebraically closed extension field) that $\alpha_{r-1}$ is nilpotent if and only if $(\alpha_{r-1})^N = 0$. This implies that $\bsVrs(\GLmn)(K) = \bsV_{r;N}(\GLmn)(K)$ for all $s \geq N$, which in turn implies that the unions $\bsNr(\GLmn)(K) = \bigcup_{s \geq 1} \bfHom(\Mrs,\GLmn)(K)$ and $ \bigcup_{s \geq 1} \bfHom(\Mrs,G)(K)$ reach stable values at $s=N$. Finally, by the proof of \cite[Theorem 3.3.7]{Drupieski:2017a}, $\bfHom(\M_{r;N},\GLmn)$ identifies with a closed subsuperscheme of $\bsVr(\GLmn)$, and $\bfHom(\M_{r;N},G)$ identifies via the embedding $G \hookrightarrow \GLmn$ with a closed subsuperscheme of $\bfHom(\M_{r;N},\GLmn)$. Then taking $k$-points, it follows as in the first paragraph that if $k$ is algebraically closed, then $\bsNr(\GLmn)(k) = \bfHom(\M_{r;N},\GLmn)(k)$ identifies with a Zariski closed subset of the affine variety $\bsV(\GLmn)(k)$, and $\bfHom(\M_{r;N},G)(k)$ identifies with a closed subvariety of $\bsNr(\GLmn)(k)$.
\end{proof}

\begin{definition} \label{definition:kNrG}
Let $G$ be an algebraic $k$-supergroup scheme. By Lemma \ref{lem:HomMrGvariety}, there exists a minimal integer $N \geq 1$ such that for any field extension $K$ of $k$ and any integer $N' \geq N$, the canonical quotient map $\Mr \twoheadrightarrow \M_{r;N'}$ induces an identification $\bfHom(\M_{r;N'},G)(K) = \bfHom(\Mr,G)(K)$. (For $G = \GLmn$, one has $N = \max(m,n)$.) Define the $k$-algebra $k[\NrG]$ by
	\[
	k[\NrG] = k[\bfHom(\M_{r;N},G)]_{red},
	\]
the largest (purely even) reduced quotient of the coordinate superalgebra $k[\bfHom(\M_{r;N},G)]$. If $k$ is algebraically closed, then $k[\NrG]$ is the coordinate algebra of the affine algebraic variety
	\[
	\NrG := \bfHom(\Mr,G)(k).
	\]
\end{definition}

\begin{remark} \label{remark:kNrG}
Let $G$ be an algebraic $k$-supergroup scheme, and let $N$ be as in Definition \ref{definition:kNrG}.
\begin{enumerate}
\item The affine $k$-superscheme structure on $\bfHom(\M_{r;N},G)$ is defined in \cite{Drupieski:2017a} in terms of a choice of closed embedding $G \hookrightarrow \GLmn$. However, Yoneda's Lemma then implies that the coordinate super\-algebra $k[\bfHom(\M_{r;N},G)]$ is independent, up to isomorphism, of the choice of closed embedding. Consequently, $k[\NrG]$ is also uniquely determined up to isomorphism.

\item Let $N' \geq N$. Then by the definition of $N$, the canonical quotient maps $\Mr \twoheadrightarrow \M_{r;N'} \twoheadrightarrow \M_{r;N}$ induce identifications
	\begin{equation} \label{eq:HomKequal}
	\bfHom(\M_{r;N},G)(k) = \bfHom(\M_{r;N'},G)(k) = \bfHom(\Mr,G)(k).
	\end{equation}
The quotient map $\pi: \M_{r;N'} \twoheadrightarrow \M_{r;N}$ also induces a $k$-algebra homomorphism
	\[
	\pi^*: k[\bfHom(\M_{r;N'},G)]_{red} \rightarrow k[\bfHom(\M_{r;N},G)]_{red} = k[\NrG].
	\]
If $k$ is algebraically closed, then \eqref{eq:HomKequal} implies that $\pi^*$ induces the identity map between maximal ideal spectra, and hence implies (by the anti-equivalence of categories between affine algebraic varieties over $k$, and finitely-generated reduced commutative $k$-algebras) that $\pi^*$ is an isomorphism.

\item \label{item:basechange} Let $K/k$ be a field extension. Base change to $K$ defines a $k$-algebra homomorphism
	\[
	k[\bfHom(\M_{r;N},G)] \rightarrow k[\bfHom(\M_{r;N},G)] \otimes_k K = K[\bfHom(\M_{r;N} \otimes_k K,G \otimes_k K)].
	\]
Then passing to the reduced quotient rings, one gets a $k$-algebra homomorphism
	\[
	k[\NrG] \rightarrow K[\Nr(G \otimes_k K)].
	\]
\end{enumerate}
\end{remark}

\begin{lemma} \label{lemma:kNr}
Suppose $k$ is algebraically closed, and let $\bsmu,\bsa_0,\ldots,\bsa_{r-1},\bsb_s$ be indeterminates over $k$. Then
	\begin{align*}
	k[\Nr(\Mrone)] &\cong k[\bsmu,\bsa_0,\ldots,\bsa_{r-1}], \\
	k[\Nr(\Mrs)] &\cong k[\bsmu,\bsa_0,\ldots,\bsa_{r-1},\bsb_s]/\subgrp{\bsmu^2 - \bsa_0^{p^r}} & \text{if $s \geq 2$,} \\
	k[\Nr(\Mrseta)] &\cong k[\bsmu,\bsa_0,\ldots,\bsa_{r-1}]/\subgrp{\bsmu^2 - \bsa_0^{p^r}} & \text{if $r \geq 2$ and $0 \neq \eta \in k$,} \\
	k[\Nr(\Gar)] &\cong k[\bsa_0,\ldots,\bsa_{r-1}], & \text{and} \\
	k[\Nr(\Gam)] &\cong k[\bsmu].
	\end{align*}
\end{lemma}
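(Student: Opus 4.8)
The plan is to pin down each $k[\NrG]$ by recognizing it as the object representing the functor $A \mapsto \bfHom(\Mr,G)(A)$ on reduced (purely even) commutative $k$-algebras, and then to read off that representing object from Proposition \ref{prop:identifyhom} and Corollary \ref{cor:HomMr} by Yoneda's Lemma.

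First I would fix $G$ from the list and let $N$ be the integer attached to it in Definition \ref{definition:kNrG}. I would then pick an integer $N' \geq N$ large enough for Proposition \ref{prop:identifyhom} to apply with $\M_{r;N'}$ as the intermediate quotient — concretely $N' \geq \max(N,s)$ when $G = \Mrs$, $N' \geq \max(N,s+1)$ when $G = \Mrseta$, and $N' = N$ for $G \in \set{\Mrone, \Gar, \Gam}$ — and invoke the second part of Remark \ref{remark:kNrG} to get a $k$-algebra isomorphism $k[\NrG] \cong k[\bfHom(\M_{r;N'},G)]_{red}$. The core of the argument is then the observation that, for every reduced purely even $A \in \calg_k$, the universal property of the largest purely even reduced quotient together with the representability of $\bfHom(\M_{r;N'},G)$ yields natural bijections
	\[
	\Hom_{\alg}(k[\NrG],A) = \bfHom(\M_{r;N'},G)(A) = \bfHom(\Mr,G)(A),
	\]
the last equality being exactly the content of the relevant part of Proposition \ref{prop:identifyhom} (or Corollary \ref{cor:HomMr}). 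That same result describes $\bfHom(\Mr,G)(A)$ — naturally in $A$, as is clear from the explicit comorphism formulas it records — as the set of tuples in $A$ cut out by the stated relations, e.g.\ $\set{(\mu,a_0,\ldots,a_{r-1},b_s) \in A^{r+2}: \mu^2 = a_0^{p^r}}$ when $G = \Mrs$ with $s \geq 2$; and this set of tuples is visibly $\Hom_{\alg}(R_G,A)$, where $R_G$ denotes the $k$-algebra on the right-hand side of the claimed isomorphism for $G$. Thus I would obtain a natural isomorphism $\Hom_{\alg}(k[\NrG],-) \cong \Hom_{\alg}(R_G,-)$ of functors on reduced commutative $k$-algebras.

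To conclude $k[\NrG] \cong R_G$ from this — via Yoneda's Lemma, or equivalently via the anti-equivalence between affine algebraic varieties over $k$ and finitely generated reduced commutative $k$-algebras already used in Remark \ref{remark:kNrG} — the one remaining point is to check that $R_G$ is itself reduced, so that both $k[\NrG]$ and $R_G$ lie in the relevant category. For $G \in \set{\Mrone, \Gar, \Gam}$ the ring $R_G$ is a polynomial ring, hence a domain. For $G = \Mrs$ ($s \geq 2$) or $G = \Mrseta$ ($r \geq 2$, $0 \neq \eta \in k$), $R_G$ is either the hypersurface ring $k[\bsmu,\bsa_0,\ldots,\bsa_{r-1}]/\subgrp{\bsmu^2 - \bsa_0^{p^r}}$ (when $G = \Mrseta$) or a polynomial ring in one further variable $\bsb_s$ over that ring (when $G = \Mrs$). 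Writing the polynomial ring as $T[\bsmu]$ with $T = k[\bsa_0,\ldots,\bsa_{r-1}]$, the monic quadratic $\bsmu^2 - \bsa_0^{p^r}$ factors nontrivially in $T[\bsmu]$ only if $\bsa_0^{p^r}$ is a square in the domain $T$, which is impossible on degree grounds since $p^r$ is odd (as $p \geq 3$). Hence $\bsmu^2 - \bsa_0^{p^r}$ is irreducible in the UFD $k[\bsmu,\bsa_0,\ldots,\bsa_{r-1}]$, the ideal it generates is prime, and $R_G$ is an integral domain, in particular reduced.

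I expect the only real friction to be the bookkeeping around the integer $N'$: it must be chosen large enough both for Proposition \ref{prop:identifyhom} to apply verbatim and for the reduced coordinate algebras $k[\bfHom(\M_{r;N'},G)]_{red}$ and $k[\bfHom(\M_{r;N},G)]_{red}$ to be identified via Remark \ref{remark:kNrG}, and one must feed Proposition \ref{prop:identifyhom} the correct intermediate quotient in each case. Beyond that the argument is purely formal except for the elementary irreducibility check above, which is the only genuine computation.
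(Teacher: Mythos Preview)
Your proposal is correct and follows essentially the same approach as the paper, which simply says the characterizations are ``immediate consequences of the calculation in Proposition \ref{prop:identifyhom} of the variety $\NrG = \bfHom(\Mr,G)(k)$.'' You have, however, spelled out a step the paper leaves implicit: the verification that each candidate ring $R_G$ is reduced (indeed a domain), which is what guarantees that the radical ideal of functions vanishing on the variety described in Proposition \ref{prop:identifyhom} coincides with the displayed defining ideal, so that the anti-equivalence between varieties and reduced finitely-generated $k$-algebras yields exactly $R_G$ rather than some quotient of it.
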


\begin{proof}
These characterizations are immediate consequences of the calculation in Proposition \ref{prop:identifyhom} of the variety $\NrG = \bfHom(\Mr,G)(k)$.
\end{proof}

\begin{lemma} \label{lemma:NrGnatural}
Suppose $k$ is algebraically closed and let $\phi: G \rightarrow G'$ be a homomorphism of algebraic $k$-supergroup schemes. Then composition with $\phi$ defines a morphism of affine varieties
	\[
	\phi_*: \NrG \rightarrow \Nr(G'), \quad \nu \mapsto \phi \circ \nu,
	\]
and hence also a homomorphism of $k$-algebras $\phi^*: k[\Nr(G')] \rightarrow k[\NrG]$.
\end{lemma}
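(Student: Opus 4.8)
The plan is to realize $\phi_*$ as the map on $k$-rational points of a homomorphism of coordinate algebras, which itself comes from a natural transformation of representable functors via Yoneda's Lemma. First I would fix a single integer $N$ large enough to compute both coordinate algebras at once: by Definition \ref{definition:kNrG} there are integers $N_G, N_{G'} \geq 1$ with $k[\NrG] = k[\bfHom(\M_{r;N_G},G)]_{red}$ and $k[\Nr(G')] = k[\bfHom(\M_{r;N_{G'}},G')]_{red}$, and setting $N = \max(N_G, N_{G'})$, the second item of Remark \ref{remark:kNrG} (which uses that $k$ is algebraically closed) supplies canonical isomorphisms $k[\bfHom(\M_{r;N},G)]_{red} \cong k[\NrG]$ and $k[\bfHom(\M_{r;N},G')]_{red} \cong k[\Nr(G')]$. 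So it suffices to work with the single finite supergroup scheme $\M_{r;N}$.

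Next I would observe that composition with $\phi$ gives a natural transformation of $k$-superfunctors
	\[
	\Phi : \bfHom(\M_{r;N},G) \longrightarrow \bfHom(\M_{r;N},G'),
	\]
whose component at $A \in \csalg_k$ sends a homomorphism $\rho : \M_{r;N} \otimes_k A \rightarrow G \otimes_k A$ to the composite $(\phi \otimes_k A) \circ \rho : \M_{r;N} \otimes_k A \rightarrow G' \otimes_k A$; naturality in $A$ is immediate from the functoriality of base change. Both of these functors are representable by affine $k$-superschemes of finite type, by \cite[Theorem 3.3.7]{Drupieski:2017a} (cf.\ the proof of Lemma \ref{lem:HomMrGvariety}), so Yoneda's Lemma turns $\Phi$ into a homomorphism of coordinate superalgebras $\Phi^* : k[\bfHom(\M_{r;N},G')] \rightarrow k[\bfHom(\M_{r;N},G)]$. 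Since any superalgebra homomorphism sends nilpotent elements to nilpotent elements, $\Phi^*$ descends to the largest reduced quotients, and combining this with the isomorphisms of the previous paragraph yields the asserted $k$-algebra homomorphism $\phi^* : k[\Nr(G')] \rightarrow k[\NrG]$.

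To finish I would check that the morphism of affine varieties $\NrG \rightarrow \Nr(G')$ associated to $\phi^*$ is exactly the set map $\nu \mapsto \phi \circ \nu$. Because $k$ is algebraically closed and hence reduced, applying $\Hom_{\alg}(-,k)$ to $\phi^*$ recovers the map of $k$-points $\Phi_k : \bfHom(\M_{r;N},G)(k) \rightarrow \bfHom(\M_{r;N},G')(k)$, and under the identifications $\bfHom(\M_{r;N},G)(k) = \NrG$ and $\bfHom(\M_{r;N},G')(k) = \Nr(G')$ this map is by construction composition with $\phi$. Hence $\phi_*$ is a morphism of affine varieties with comorphism $\phi^*$.

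The argument is essentially formal, so I expect no step to present a genuine obstacle; the only point needing care is the bookkeeping with the integer $N$, together with the fact (the first item of Remark \ref{remark:kNrG}, via Yoneda) that $k[\bfHom(\M_{r;N},G)]$ is well-defined up to isomorphism independently of the auxiliary closed embedding of $G$ into a general linear supergroup scheme used to define it. If one preferred to avoid Yoneda altogether, an alternative would be to fix closed embeddings $G \hookrightarrow \GLmn$ and $G' \hookrightarrow GL_{m'|n'}$, describe homomorphisms out of $\M_{r;N}$ by the explicit exponential formulas of Remark \ref{remark:exp}, and verify directly that $\phi_*$ is then given coordinatewise by polynomials; the functorial route is cleaner and free of choices.
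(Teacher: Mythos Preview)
Your proposal is correct and follows essentially the same route as the paper: fix a single $N$ large enough for both $G$ and $G'$, invoke \cite[Theorem 3.3.7]{Drupieski:2017a} for the functors $\bfHom(\M_{r;N},-)$, and take $k$-points. The only difference is cosmetic: the paper cites that theorem directly for the statement that composition with $\phi$ is a morphism of affine $k$-superschemes, whereas you unpack this one step further by invoking representability plus Yoneda and then passing to reduced quotients.
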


\begin{proof}
Choose a positive integer $N$ large enough so that $\NrG = \bfHom(\M_{r;N},G)(k)$ and $\Nr(G') = \bfHom(\M_{r;N},G')(k)$. By \cite[Theorem 3.3.7]{Drupieski:2017a}, composition with $\phi$ defines a morphism of affine $k$-super\-schemes $\bfHom(\M_{r;N},G) \rightarrow \bfHom(\M_{r;N},G')$. Then it follows that the induced map between the sets of $k$-points is a morphism of affine varieties.
\end{proof}

\subsection{Universal homomorphisms} \label{subsection:universalhomomorphism}

Let $G$ be an algebraic $k$-supergroup scheme. Recall from \cite[Definition 3.3.9]{Drupieski:2017a} the universal super\-group homomorphism
	\[
	\rho : \Mrs \otimes_k k[\bfHom(\Mrs,G)] \rightarrow G \otimes_k k[\bfHom(\Mrs,G)].
	\]
This homomorphism is universal in the sense that if $B \in \csalg_k$ and if $\rho' \in \bfHom(\Mrs,G)(B)$, then there exists a unique $k$-superalgebra homomorphism $\phi: k[\bfHom(\Mrs,G)] \rightarrow B$ such that $\rho' = \rho \otimes_\phi B$, i.e., such that $\rho' : \Mrs \otimes_k B \rightarrow G \otimes_k B$ is obtained from $\rho$ via base change along $\phi$.

\begin{definition}[Universal homomorphism from $\Mr$ to $G$]
Let $G$ be an algebraic $k$-supergroup scheme, and let $N \geq 1$ be as in Definition \ref{definition:kNrG}. Define the \emph{universal supergroup homomorphism from $\Mr$ to $G$} to be the homomorphism of $k[\NrG]$-supergroup schemes
	\[
	u_G: \Mr \otimes_k k[\NrG] \rightarrow G \otimes_k k[\NrG]
	\]
that is obtained from the universal homomorphism
	\[
	\rho : \M_{r;N} \otimes_k k[\bfHom(\M_{r;N},G)] \rightarrow G \otimes_k k[\bfHom(\M_{r;N},G)]
	\]
via base change along the canonical quotient map
	\[
	k[\bfHom(\M_{r;N},G)] \twoheadrightarrow k[\bfHom(\M_{r;N},G)]_{red} = k[\NrG].
	\]
Then $u_G$ is universal in the sense that if $K/k$ is a field extension and if $\rho' \in \bfHom(\Mr,G)(K)$, then there exists a unique $k$-algebra homomorphism $\phi: k[\NrG] \rightarrow K$ such that $\rho' = u_G \otimes_\phi K$, i.e., such that $\rho': \Mr \otimes_k K \rightarrow G \otimes_k K$ is obtained from $u_G$ via base change along $\phi$.
\end{definition}

Let $G$ be an algebraic $k$-supergroup scheme. As in \cite[\S6.1]{Drupieski:2017a}, set
	\begin{equation} \label{eq:HGk}
	H(G,k) := \opH^{\ev}(G,k)_{\zero} \oplus \opH^{\odd}(G,k)_{\one}.
	\end{equation}
Then $H(G,k)$ inherits from $\Hbul(G,k)$ the structure of a $\Z$-graded commutative (in the ordinary sense) $k$-algebra. As in \cite[\S6.2]{Drupieski:2017a}, we use the homomorphism $u_G: \Mr \otimes_k k[\NrG] \rightarrow G \otimes_k k[\NrG]$ to define a $k$-algebra homomorphism
	\[
	\psi_r: H(G,k) \rightarrow k[\NrG].
	\]
Recall from \cite[Proposition 3.2.1(2)]{Drupieski:2017a} the calculation of the cohomology ring $\Hbul(\Mr,k)$:
	\[
	\Hbul(\Mr,k) \cong k[x_1,\ldots,x_r,y]/\subgrp{x_r-y^2} \gotimes \Lambda(\lambda_1,\ldots,\lambda_r).
	\]
Then the map $\Hbul(\Mr,k) \rightarrow k$ that sends $x_r$ and $y$ each to $1$ but that sends the other algebra generators to $0$ is a $k$-algebra homomorphism (though not a superalgebra homomorphism, since $y$ is of odd superdegree). Extending scalars to $k[\NrG]$, one gets a $k$-algebra homomorphism
	\[
	\ve: \Hbul(\Mr \otimes_k k[\NrG],k[\NrG]) = \Hbul(\Mr,k) \otimes_k k[\NrG] \rightarrow k \otimes_k k[\NrG] = k[\NrG].
	\]
Now define $\psi_r: H(G,k) \rightarrow k[\NrG]$ to be the composite algebra homomorphism
	\begin{multline} \label{eq:psir}
	\psi_r: H(G,k) \stackrel{\iota}{\longrightarrow} \Hbul(G,k) \otimes_k k[\NrG] = \Hbul(G \otimes_k k[\NrG],k[\NrG]) \\
	\stackrel{u_G^*}{\longrightarrow} \Hbul(\Mr \otimes_k k[\NrG],k[\NrG]) \stackrel{\ve}{\longrightarrow} k[\NrG],
	\end{multline}
where $\iota: H(G,k) \rightarrow \Hbul(G,k) \otimes_k k[\NrG]$ is the base change map $z \mapsto z \otimes 1$. Thus, in the special case $r=1$ and for $z \in H(G,k)$ homogeneous of degree $n$, one gets
	\begin{equation} \label{eq:uG*r=1}
	(u_G^* \circ \iota)(z) = \psi(z) \cdot y^n \in H(\Mone,k) \otimes_k k[\NoneG] \cong k[y] \otimes_k k[\NoneG].
	\end{equation}
The next proposition is an analogue of \cite[Theorem 1.14]{Suslin:1997a}, and is related to \cite[Proposition 6.2.2]{Drupieski:2017a}.

\begin{proposition} \label{prop:psirgradedmap}
Let $G$ be an algebraic $k$-supergroup scheme. Then $k[\NrG]$ admits the structure of a $\Z[\frac{p^r}{2}]$-graded connected $k$-algebra, and the homomorphism
	\[
	\psi_r: H(G,k) \rightarrow k[\NrG]
	\]
is then a homomorphism of graded $k$-algebras that multiplies degrees by $\frac{p^r}{2}$. Moreover, $\psi_r$ is natural with respect to homomorphisms $\phi: G \rightarrow G'$ of algebraic $k$-supergroup schemes.
\end{proposition}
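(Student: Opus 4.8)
The plan is to adapt the proof of \cite[Theorem 1.14]{Suslin:1997a} (compare \cite[Proposition 6.2.2]{Drupieski:2017a}): first equip $k[\NrG]$ with a grading, then track cohomological degrees through the composite \eqref{eq:psir} defining $\psi_r$, and finally deduce naturality in $G$ from the functoriality of the ingredients.

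For the grading I would begin with $G=\GLmn$. By the proof of Lemma \ref{lem:HomMrGvariety}, $k[\NrGLmn]$ is a reduced quotient of a quotient of the polynomial algebra on the matrix coordinate functions of $\alpha_0,\dots,\alpha_{r-1},\beta$; declaring the coordinate functions of $\alpha_i$ to have degree $p^i$ and those of $\beta$ to have degree $\frac{p^r}{2}$ makes each defining relation $[\alpha_i,\alpha_j]=[\alpha_i,\beta]=0$, $\alpha_i^p=0$ for $i\le r-2$, and $\alpha_{r-1}^p+\beta^2=0$ homogeneous, so the grading descends to $k[\NrGLmn]$, which is then connected because every generator lies in positive degree. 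To see this grading is independent of the presentation and to transport it to an arbitrary algebraic $G$, I would note that it agrees with one-half of the $\Z$-grading induced on $k[\NrG]$ by the $\G_m$-action on $\bfHom(\Mr,G)$ given by precomposition with the one-parameter family $\lambda\colon\G_m\to\bfAut(\Mr)$ sending $t$ to the automorphism labeled, in the sense of Corollary \ref{cor:HomMr}, by the tuple $(t^{p^r},t^2,0,\dots,0)$ (this being the $\G_m$-action attached to the $\Z$-grading on $k[\Mr]$ recorded in \cite[Remark 3.1.3(3)]{Drupieski:2017a}): using the exponential description \eqref{eq:rhoalphabeta} one checks this action is $t\cdot(\ualpha|\beta)=(t^2\alpha_0,\dots,t^{2p^{r-1}}\alpha_{r-1},t^{p^r}\beta)$ on $\bsNr(\GLmn)$, which induces exactly the claimed grading. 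Since the $\G_m$-action is defined purely in terms of $\Mr$ and $G$, Yoneda's Lemma and Remark \ref{remark:kNrG} show the grading on $k[\NrG]$ is well defined, and for a closed embedding $G\hookrightarrow\GLmn$ it realizes $k[\NrG]$ as a graded quotient of $k[\NrGLmn]$; on the explicit presentations of Lemma \ref{lemma:kNr} it is the grading with $\deg(\bsmu)=\frac{p^r}{2}$, $\deg(\bsa_i)=p^i$, and $\deg(\bsb_s)=p^{r-1}$.

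To see $\psi_r$ is graded of degree $\frac{p^r}{2}$, I would also put the $\lambda$-induced grading on $\Hbul(\Mr,k)$, so that (via \cite[Proposition 3.2.1(2)]{Drupieski:2017a}) $y$ has degree $\frac{p^r}{2}$ and $x_r=y^2$ has degree $p^r$, and give $\Hbul(\Mr,k)\otimes_k k[\NrG]=\Hbul(\Mr\otimes_k k[\NrG],k[\NrG])$ and $\Hbul(G,k)\otimes_k k[\NrG]$ the total gradings obtained by combining it with the grading on $k[\NrG]$, placing $\Hbul(G,k)$ in degree $0$. Because the universal homomorphism $u_G$ is equivariant for $\lambda$ on $\Mr$, the grading action on $k[\NrG]$, and the trivial action on $G$ — this compatibility being exactly that between the $\G_m$-action on $\bfHom(\Mr,G)$ and $u_G$, which is read off from the universal property of $u_G$ — the map $u_G^*$ preserves the total grading, while the base-change map $\iota$ of \eqref{eq:psir} lands in total degree $0$. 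Thus for homogeneous $z\in H(G,k)$ of cohomological degree $n$, the element $(u_G^*\circ\iota)(z)\in\Hbul(\Mr,k)\otimes_k k[\NrG]$ has cohomological degree $n$ and total degree $0$. Expanding it in the basis of $\Hbul(\Mr,k)$ consisting of the monomials in $y,x_1,\dots,x_{r-1}$ times products of distinct $\lambda_i$'s (having used $x_r=y^2$) and then applying $\ve$, which retains only the coefficient of $y^n$, leaves $\psi_r(z)$; since $y^n$ has degree $n\cdot\frac{p^r}{2}$, vanishing of the total degree forces $\psi_r(z)$ to be homogeneous of degree $n\cdot\frac{p^r}{2}$. (In particular each $\lambda_i$ is sent to a square-zero element of the reduced ring $k[\NrG]$, hence to $0$, so $\psi_r$ factors through the polynomial part of $H(\Mr,k)$.) Together with the fact, noted after \eqref{eq:psir}, that $\psi_r$ is a homomorphism of $k$-algebras, this shows $\psi_r$ is a homomorphism of graded $k$-algebras multiplying degrees by $\frac{p^r}{2}$. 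Naturality then follows: given $\phi\colon G\to G'$, the morphism $\phi_*$ of Lemma \ref{lemma:NrGnatural} is $\G_m$-equivariant because precomposition with $\lambda(t)$ commutes with postcomposition with $\phi$, so $\phi^*\colon k[\calN_r(G')]\to k[\NrG]$ is graded, and the square relating $\psi_r$ for $G$ and for $G'$ commutes because the universal homomorphisms are compatible ($\phi\circ u_G$ being the base change of $u_{G'}$ along $\phi^*$) while $\iota$ and $\ve$ are evidently natural ($\ve$ involving only $\Mr$).

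I expect the main obstacle to be establishing the equivariance of $u_G$ — equivalently, reconciling the matrix-coordinate description of the grading on $k[\NrG]$ with the $\bfAut(\Mr)$-action description, and checking that the universal property of $u_G$ is compatible with the $\G_m$-actions on $\Mr$, on the base ring $k[\NrG]$, and (trivially) on $G$. Once that compatibility is in place, the degree count and the naturality square are routine.
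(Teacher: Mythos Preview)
Your argument is correct, but it takes a different route from the paper. The paper's proof is a two-line reduction: it observes that $k[\NrG]=k[V_{r;N}(G)]_{red}$, invokes \cite[Corollary 3.4.3]{Drupieski:2017a} for the $\Z[\frac{p^r}{2}]$-grading on $k[V_{r;N}(G)]$ (which passes to the reduced quotient because the nilradical is homogeneous), and then uses the commutative diagram \eqref{eq:psirdiagram} to reduce the gradedness of $\psi_r$ to that of $\psi_{r;N}$, which is already \cite[Proposition 6.2.2]{Drupieski:2017a}; naturality is likewise cited from \cite[Lemma 6.2.1]{Drupieski:2017a}. In other words, the paper simply transports the companion paper's results along the quotient $k[V_{r;N}(G)]\twoheadrightarrow k[\NrG]$.

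What you do instead is reprove those companion-paper results directly in the present setting: you construct the grading from scratch via the $\G_m$-action by precomposition with the one-parameter family in $\bfAut(\Mr)$ coming from the $\Z$-grading on $k[\Mr]$, and you verify the degree multiplication by tracking $\G_m$-equivariance through the composite \eqref{eq:psir}. This is exactly the mechanism underlying \cite[Corollary 3.4.3]{Drupieski:2017a} and \cite[Proposition 6.2.2]{Drupieski:2017a}, so your approach is more self-contained and makes the geometric origin of the grading transparent, at the cost of being longer and requiring you to pin down the equivariance of $u_G$ carefully (the point you correctly flag as the main obstacle). The paper's approach is shorter but opaque unless one has the companion paper at hand.
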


\begin{proof}
Let $N$ be as in Definition \ref{definition:kNrG}. Recall from \cite[Definition 3.3.8]{Drupieski:2017a} that $k[V_{r;N}(G)]$ is the largest purely even quotient of $k[\bfHom(\M_{r;N},G)]$. Then $k[\NrG] = k[V_{r;N}(G)]_{red}$. By \cite[Corollary 3.4.3]{Drupieski:2017a}, $k[V_{r;N}(G)]$ admits the structure of a $\Z[\frac{p^r}{2}]$-graded connected $k$-algebra. Then the nilpotent elements in $k[V_{r;N}(G)]$ form a homogeneous ideal, and hence $k[\NrG]$ inherits the structure of a $\Z[\frac{p^r}{2}]$-graded connected $k$-algebra. Next, recall from \cite[\S6.2]{Drupieski:2017a} that the homomorphism
	\[
	\psi_{r;N}: H(G,k) \rightarrow k[V_{r;N}(G)]
	\]
is defined as a composite exactly as in \eqref{eq:psir}, except that $k[\NrG]$ is replaced by $k[V_{r;N}(G)]$, and the homomorphism $u_G: \Mr \otimes_k k[\NrG] \rightarrow G \otimes_k k[\NrG]$ is replaced by the universal purely even supergroup homomorphism $u_{r;N}: \M_{r;N} \otimes_k k[V_{r;N}(G)] \rightarrow G \otimes_k k[V_{r;N}(G)]$ of \cite[Definition 3.3.9(2)]{Drupieski:2017a}. Since $u_G$ can be obtained from $u_{r;N}$ via base change along the canonical quotient map $k[V_{r;N}(G)] \twoheadrightarrow k[V_{r;N}(G)]_{red} = k[\NrG]$, it follows that there is a commutative diagram
\begin{equation} \label{eq:psirdiagram}
\xymatrix@C=1em{
H(G,k) \ar@{->}[r] \ar@{=}[d] & \Hbul(G \otimes_k k[V_{r;N}(G)],k[V_{r;N}(G)]) \ar@{->}[r] \ar@{->}[d] & \Hbul(\Mr \otimes_k k[V_{r;N}(G)],k[V_{r;N}(G)]) \ar@{->}[r] \ar@{->}[d] & k[V_{r;N}(G)] \ar@{->>}[d] \\
H(G,k) \ar@{->}[r] & \Hbul(G \otimes_k k[\NrG],k[\NrG]) \ar@{->}[r] & \Hbul(\Mr \otimes_k k[\NrG],k[\NrG]) \ar@{->}[r] & k[\NrG]
}
\end{equation}
in which the rows are the composites defining $\psi_{r;N}$ and $\psi_r$, respectively, and the vertical arrows are induced by the quotient map $k[V_{r;N}(G)] \twoheadrightarrow k[\NrG]$. By \cite[Proposition 6.6.2]{Drupieski:2017a}, $\psi_{r;N}$ is a homomorphism of graded $k$-algebras that multiplies degress by $\frac{p^r}{2}$. Then by the commutativity of the diagram it follows that $\psi_r$ is as well. Finally, the last assertion concerning naturality in $G$ follows from the same line of reasoning as the proof of \cite[Lemma 6.2.1]{Drupieski:2017a}.
\end{proof}

\section{Cohomology and support varieties} \label{section:cohomologyandsupport}

\subsection{Induced maps in cohomology} \label{subsection:inducedmaps}

In this section we calculate the maps in cohomology induced by the homomorphisms described in Proposition \ref{prop:identifyhom}. We identify the relevant cohomology rings as in \cite[Proposition 3.2.1]{Drupieski:2017a}, using also the fact that base change induces identifications of the form $\Hbul(G \otimes_k A,A) = \Hbul(G,k) \otimes_k A$.

\begin{lemma} \label{lemma:inducedmapincohomology}
Let $A = \Azero \in \calg_k$ be a purely even commutative $k$-algebra. In parts (2) and (3), assume that $A$ is reduced.
	\begin{enumerate}
	\item \label{item:rho*Mrone} Let $(\mu,a_0,\ldots,a_{r-1}) \in A^{r+1}$, and let $\rho: \Mrone \otimes_k A \rightarrow \Mrone \otimes_k A$ be the corresponding homo\-morphism as in Proposition \ref{prop:identifyhom}(\ref{item:HomMrone}). Then the induced map in cohomology $\rho^*: \Hbul(\Mrone \otimes_k A,A) \rightarrow \Hbul(\Mrone \otimes_k A,A)$ satisfies
		\begin{align*}
		\rho^*(y) &= \mu \cdot y, \\
		\rho^*(\lambda_i) &= a_0^{p^{i-1}} \lambda_i + a_1^{p^{i-1}} \lambda_{i+1} + \cdots + a_{r-i}^{p^{i-1}} \lambda_r & \text{for $1 \leq i \leq r$, and} \\
		\rho^*(x_i) &= a_0^{p^i} x_i + a_1^{p^i} x_{i+1} + \cdots + a_{r-i}^{p^i} x_r & \text{for $1 \leq i \leq r$.}
		\end{align*}
	
	\item \label{item:rho*Mrs} Suppose $s \geq 2$. Let $(\mu,a_0,\ldots,a_{r-1},b_s) \in A^{r+2}$ such that $\mu^2 = a_0^{p^r}$, and let $\rho: \Mrs \otimes_k A \rightarrow \Mrs \otimes_k A$ be the corresponding homomorphism as in Proposition \ref{prop:identifyhom}(\ref{item:HomMrs}). Then the induced map in cohomology $\rho^*: \Hbul(\Mrs \otimes_k A,A) \rightarrow \Hbul(\Mrs \otimes_k A,A)$ satisfies
		\begin{align*}
		\rho^*(y) &= \mu \cdot y, \\
		\rho^*(\lambda_i) &= a_0^{p^{i-1}} \lambda_i + a_1^{p^{i-1}} \lambda_{i+1} + \cdots + a_{r-i}^{p^{i-1}} \lambda_r & \text{for $1 \leq i \leq r$,} \\
		\rho^*(x_i) &= a_0^{p^i} x_i + a_1^{p^i} x_{i+1} + \cdots + a_{r-i}^{p^i} x_r & \text{for $1 \leq i \leq r$, and} \\
		\rho^*(w_s) &= a_0^{p^{r+s-1}} w_s + (b_s)^p x_r.
		\end{align*}
	
	\item \label{item:rho*Mrseta} Suppose $r \geq 2$, and let $0 \neq \eta \in k$. Let $(\mu,a_0,\ldots,a_{r-1}) \in A^{r+1}$ such that $\mu^2 = a_0^{p^r}$, and let $\rho: \M_{r;s+1} \otimes_k A \rightarrow \Mrseta \otimes_k A$ be the corresponding homomorphism as in Proposition \ref{prop:identifyhom}(\ref{item:HomMrseta}). Then identifying $\Hbul(\Mrseta,k)$ as an algebra with
		\[
		\Hbul(\M_{r-1;s+1},k) = k[x_1,\ldots,x_{r-1},y,w_{s+1}] /\subgrp{x_{r-1}-y^2} \gotimes \Lambda(\lambda_1,\ldots,\lambda_{r-1})
		\]
	as in \cite[Proposition 3.2.1(5)]{Drupieski:2017a}, the induced map in cohomology $\rho^*: \Hbul(\Mrseta \otimes_k A,A) \rightarrow \Hbul(\M_{r;s+1} \otimes_k A,A)$ satisfies
		\begin{align*}
		\rho^*(y) &= \mu \cdot y, \\
		\rho^*(\lambda_i) &= a_0^{p^i} \lambda_{i+1} + a_1^{p^i} \lambda_{i+2} + \cdots + a_{r-i-1}^{p^i} \lambda_r & \text{for $1 \leq i \leq r-1$,} \\
		\rho^*(x_i) &= a_0^{p^{i+1}} x_{i+1} + a_1^{p^{i+1}} x_{i+2} + \cdots + a_{r-i-1}^{p^{i+1}} x_r & \text{for $1 \leq i \leq r-1$, and} \\
		\rho^*(w_{s+1}) &= (-\eta^{-1})^p \cdot (a_0^p x_1 + a_1^p x_2 + \cdots + a_{r-1}^p x_r) + a_0^{p^{r+s}} w_{s+1}.
		\end{align*}

	\item \label{item:rho*Gar} Let $(a_0,\ldots,a_{r-1}) \in A^r$, and let $\rho: \Gar \otimes_k A \rightarrow \Mrone \otimes_k A$ be the corresponding homo\-mor\-phism as in Proposition \ref{prop:identifyhom}(\ref{item:HomGar}). Then the induced map in cohomology $\rho^*: \Hbul(\Gar \otimes_k A,A) \rightarrow \Hbul(\Mrone \otimes_k A,A)$ satisfies
		\begin{align*}
		\rho^*(\lambda_i) &= a_0^{p^{i-1}} \lambda_i + a_1^{p^{i-1}} \lambda_{i+1} + \cdots + a_{r-i}^{p^{i-1}} \lambda_r & \text{for $1 \leq i \leq r$, and} \\
		\rho^*(x_i) &= a_0^{p^i} x_i + a_1^{p^i} x_{i+1} + \cdots + a_{r-i}^{p^i} x_r & \text{for $1 \leq i \leq r$.}
		\end{align*}

	\item \label{item:rho*Gaminus} Let $\mu \in A$, and let $\rho: \Gam \otimes_k A \rightarrow \Mrone \otimes_k A$ be the corresponding homo\-morphism as in Proposition \ref{prop:identifyhom}(\ref{item:HomGaminus}). Write $\Hbul(\Gam,k) = k[y]$ as in \cite[Proposition 3.2.1]{Drupieski:2017a}. Then the induced map in cohomology $\rho^*: \Hbul(\Gam \otimes_k A,A) \rightarrow \Hbul(\Mrone \otimes_k A,A)$ satisfies $\rho^*(y) = \mu \cdot y$.
	\end{enumerate}
\end{lemma}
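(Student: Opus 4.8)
The plan is to use that each $\rho^*$ is a homomorphism of $\Z$-graded $A$-algebras, hence is determined by its effect on the algebra generators of the relevant target cohomology ring --- namely $y$, the $\lambda_i$, the $x_i$, and, in parts~(\ref{item:rho*Mrs}) and~(\ref{item:rho*Mrseta}), the class $w_s$, resp.\ $w_{s+1}$. Since base change gives $\Hbul(G \otimes_k A, A) = \Hbul(G,k) \otimes_k A$ and each $\rho$ is cut out over $A$ by a tuple of scalars, it suffices to treat $A = k$ and then extend scalars; the reducedness hypothesis in parts~(\ref{item:rho*Mrs}) and~(\ref{item:rho*Mrseta}) is needed only to invoke the classification of Proposition~\ref{prop:identifyhom}. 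The generators split into three types --- the odd class $y$; the ``additive'' classes $\lambda_i$, $x_i$, which are inflated along a canonical quotient onto $\Gar$; and the ``$\sigma$-class'' $w_s$ (resp.\ $w_{s+1}$) --- and I would treat these in turn.

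For the odd class: in each of (\ref{item:rho*Mrone})--(\ref{item:rho*Gaminus}) the comorphism of $\rho$ sends $\tau \mapsto \mu \cdot \tau$ (with $\mu$ the odd coordinate in part~(\ref{item:rho*Gaminus}), and $\mu = 0$ allowed), so dually $\rho$ multiplies the primitive $v$ of the group algebra by $\mu$; since $y$ is the degree-one class detected by $v$ --- equivalently, for $\Mrone$, $\Mrs$ and $\Gam$, the inflation of the canonical generator of $\Hbul(\Gam,k) = k[y]$ along the canonical quotient onto $\Gam$ --- naturality gives $\rho^*(y) = \mu \cdot y$, which in particular settles part~(\ref{item:rho*Gaminus}). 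For the additive classes: in parts~(\ref{item:rho*Mrone}), (\ref{item:rho*Mrs}), (\ref{item:rho*Gar}) the homomorphism $\rho$, followed where necessary by the canonical quotient onto $\Gar$, is the ``additive'' homomorphism with comorphism $\theta \mapsto \sum_j a_j \theta^{p^j}$; since $\lambda_1, \dots, \lambda_r$ and $x_1, \dots, x_r$ are inflated from $\Gar$, the formulas for $\rho^*(\lambda_i)$ and $\rho^*(x_i)$ reduce by naturality to the effect of an additive homomorphism on $\Hbul(\Gar,k) = k[x_1,\dots,x_r] \otimes \Lambda(\lambda_1,\dots,\lambda_r)$, which is the standard Suslin--Friedlander--Bendel computation (cf.\ \cite{Suslin:1997a} and its rederivation in \cite{Drupieski:2017a}): $\lambda_i \mapsto \sum_j a_j^{p^{i-1}} \lambda_{i+j}$ and $x_i \mapsto \sum_j a_j^{p^i} x_{i+j}$, terms of index exceeding $r$ being zero, the $p$-power twists reflecting that $\lambda_i$ and $x_i$ are the $p^{i-1}$- and $p^i$-fold Frobenius twists of the generators of $\Hbul(\Gaone,k)$ pulled back along $\theta \mapsto \theta^{p^{i-1}}$. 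Part~(\ref{item:rho*Mrseta}) is the same after identifying $\Hbul(\Mrseta,k)$ with $\Hbul(\M_{r-1;s+1},k)$ as in \cite[Proposition~3.2.1(5)]{Drupieski:2017a}; this identification absorbs one step of the $\theta$-tower, accounting for the index shift in the displayed formulas. As a check, in parts~(\ref{item:rho*Mrs}) and~(\ref{item:rho*Mrseta}) the relation $x_r = y^2$ (resp.\ $x_{r-1} = y^2$) forces $\rho^*(x_r) = \rho^*(y)^2 = \mu^2 y^2 = a_0^{p^r} x_r$, consistent with the top term of the additive formula via $\mu^2 = a_0^{p^r}$.

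The main obstacle is the image of $w_s$ (resp.\ $w_{s+1}$) in parts~(\ref{item:rho*Mrs}) and~(\ref{item:rho*Mrseta}): unlike the other generators it is not inflated from $\Gar$ or $\Gam$, and the coefficients $(b_s)^p$, resp.\ $(-\eta^{-1})^p$, appearing in the answer are Frobenius twists that must be extracted by hand. Following \cite[Proposition~3.2.1]{Drupieski:2017a}, $w_s$ is the degree-two class attached, on the group-algebra side, to the relation ``$u_{r-1}^{p^s} = 0$'' in the complete-intersection presentation of $k\Mrs$ (equivalently, on the coordinate side, to the element $\sigma_{p^{s-1}} \in k[\Mrs]$). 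The plan is to compute $\rho^*(w_s)$ directly, in the spirit of \cite[\S1]{Suslin:1997a}: write down the chain map induced on a Koszul/Tate-type resolution of $k$ over $k\Mrs$ by the group-algebra homomorphism dual to the comorphism of Proposition~\ref{prop:identifyhom} (which sends $\sigma_{p^{s-1}} \mapsto a_0^{p^{r+s-2}}\sigma_{p^{s-1}} + b_s \sigma_1$, resp.\ in part~(\ref{item:rho*Mrseta}) sends $\theta \mapsto (\sum_i a_i \theta^{p^i}) - \eta\, a_0^{p^{r+s-1}}\sigma_{p^s}$) and read off its degree-two part. The leading coefficient $a_0^{p^{r+s-2}}$ of $\sigma_{p^{s-1}}$ contributes its $p$-th power $a_0^{p^{r+s-1}}$ to the $w_s$-component, since passing from a degree-one-type coefficient on the resolution to a degree-two class in cohomology introduces one Frobenius twist; the ``lower-order'' coefficient $b_s$ of $\sigma_1$, re-expressed via the Koszul relation $u_{r-1}^p + v^2 = 0$, contributes $(b_s)^p$ times the degree-two class $x_r = y^2$ attached to that relation. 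The analogous bookkeeping for part~(\ref{item:rho*Mrseta}), carried out with Lemma~\ref{lem:k[Mrseta]coproduct} and using the relation $\eta\, u_0 = -u_{r-1}^{p^s}$ built into $k\Mrseta$ (the origin of the factor $-\eta^{-1}$), turns the $\theta$-contribution $\sum_i a_i \theta^{p^i}$ reintroduced by $\rho$ into $(-\eta^{-1})^p(a_0^p x_1 + \dots + a_{r-1}^p x_r)$ and the $\sigma_{p^s}$-coefficient into $a_0^{p^{r+s}} w_{s+1}$. Keeping track of this single Frobenius twist --- and correctly matching each lower-order contribution to a specific $x_i$ --- is where the real work lies; everything else follows routinely from the explicit resolutions and cocycle representatives recorded in \cite{Drupieski:2017a}.
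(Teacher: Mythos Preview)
Your treatment of $y$, the $\lambda_i$, and the $x_i$ is correct and essentially matches the paper: both compute these via the explicit cochain representatives $\tau$ and $\theta^{p^{i-1}}$, with the paper invoking the natural semilinear map $\ol{\beta}$ (defined by the cochain formula $\beta(f)=\sum_{\ell=1}^{p-1}\cbinom{p}{\ell}f^\ell\otimes f^{p-\ell}$) to pass from $\rho^*(\lambda_i)$ to $\rho^*(x_i)$, which is your ``standard SFB computation'' in slightly different dress.

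For $w_s$ and $w_{s+1}$ your route genuinely diverges from the paper's, and as written it is not coherent. The paper stays entirely on the coordinate-algebra (Hochschild cobar) side: it applies the comorphism $\rho^*$ directly to the explicit $2$-cochain
\[
-\Bigl(\sum_{j=1}^{p^s-1}\sigma_j\otimes\sigma_{p^s-j}+\sum_{i+j+p=p^s}\sigma_i\tau\otimes\sigma_j\tau\Bigr),
\]
expands using the formula for $\rho^*(\sigma_{i+jp^{s-1}})$ already obtained in the proof of Proposition~\ref{prop:identifyhom}, and then uses the internal $\Z$-grading on $k[\Mrs]$: since $\opH^2(\Mrs,k)_{\zero}$ has no nonzero classes of internal degree strictly between $2p^r$ and $2p^{r+s-1}$, only the top and bottom homogeneous pieces of the expansion can contribute, and these are cochain representatives for $a_0^{p^{r+s-1}}w_s$ and $(b_s)^p x_r$ respectively. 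The power $(b_s)^p$ arises combinatorially from summing the $b_s$-contributions of $\rho^*(\sigma_{jp^{s-1}})$ over $1\le j\le p-1$, not from a single abstract ``Frobenius twist''. Part~(\ref{item:rho*Mrseta}) is handled the same way after first writing out the coalgebra isomorphism $\pi^*:k[\M_{r-1;s+1}]\to k[\Mrseta]$ explicitly and computing $(\rho^*\circ\pi^*)$ on cochains; the same internal-grading argument disposes of the intermediate terms.

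Your alternative---dualize to the group algebra and lift to a chain map on a Koszul--Tate resolution---is a legitimate strategy in principle, but your sketch conflates the two pictures: you propose to work over $k\Mrs$ yet read off all coefficients from the \emph{coordinate-algebra} comorphism acting on $\sigma_{p^{s-1}}$, and $w_s$ is not ``attached to $\sigma_{p^{s-1}}$'' in any direct way (its cochain representative involves all the $\sigma_j$). To carry out your plan you would first need the dual group-algebra map explicitly on $u_0,\ldots,u_{r-1},v$ (which is lower-triangular with its own pattern of Frobenius twists), then construct the degree-two chain map by hand. This is doable, but it is more work than the paper's direct cochain computation, and the internal-grading shortcut that makes the paper's argument clean has no obvious analogue on the resolution side.
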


\begin{proof}
We will prove in detail statements \eqref{item:rho*Mrs} and \eqref{item:rho*Mrseta}, but will leave the proofs of the other (easier) statements to the reader. To begin, suppose $s \geq 2$, let $(\mu,a_0,\ldots,a_{r-1},b_s) \in A^{r+2}$ such that $\mu^2 = a_0^{p^r}$, and let $\rho: \Mrs \otimes_k A \rightarrow \Mrs \otimes_k A$ be the corresponding homomorphism as in Proposition \ref{prop:identifyhom}(\ref{item:HomMrs}). To describe the map in cohomology induced by $\rho$, we'll consider how the comorphism $\rho^*: A[\Mrs] \rightarrow A[\Mrs]$ acts on the explicit cochain representatives described in \cite[Proposition 3.2.1]{Drupieski:2017a} for the generators of the cohomology ring $\Hbul(\Mrs \otimes_k A,A) = \Hbul(\Mrs,k) \otimes_k A$. First, since $y = [\tau]$ and $\rho^*(\tau) = \mu \cdot \tau$, then $\rho^*(y) = \mu \cdot y$. Next,
	\[ \textstyle
	\rho^*(\theta^{p^{i-1}}) = \rho^*(\theta)^{p^{i-1}} = \left( \sum_{\ell=0}^{r-1} a_\ell \cdot \theta^{p^\ell} \right)^{p^{i-1}} = a_0^{p^{i-1}} \theta_i + a_1^{p^{i-1}} \theta_{i+1} + \cdots + a_{r-i}^{p^{i-1}} \theta_r.
	\]
Since $\lambda_i$ is the cohomology class of $\theta^{p^{i-1}}$, this implies the description of $\rho^*(\lambda_i)$ in \eqref{item:rho*Mrs}. Now recall from \cite[Remark 3.4.4]{Drupieski:2013b} the semilinear function $\ol{\beta}: \opH^1(\Mrs,k)_{\zero} \rightarrow \opH^2(\Mrs,k)_{\zero}$. Given a cocycle representative $f \in k[\Mrs]_{\zero}$ for an element of $\opH^1(\Mrs,k)_{\zero}$, the action of $\ol{\beta}$ on the cohomology class $[f]$ is defined by $\ol{\beta}([f]) = [\beta(f)]$, where
	\begin{equation} \label{eq:beta} \textstyle
	\beta(f) = \sum_{\ell=1}^{p-1} \cbinom{p}{\ell} f^\ell \otimes f^{p-\ell},
	\end{equation}
and $\cbinom{p}{\ell}$ denotes the integer $\frac{1}{p} \cdot \binom{p}{\ell}$. The map $\ol{\beta}$ is evidently natural with respect to Hopf superalgebra homomorphisms (equivalently, homomorphisms of supergroup schemes). Since $x_i = \ol{\beta}(\lambda_i)$ by the calculations in \cite[Proposition 3.2.1]{Drupieski:2017a} (cf.\ also the proof of \cite[Proposition 3.4.2]{Drupieski:2013b}), and since $\ol{\beta}$ is semilinear, the description of $\rho^*(x_i)$ in \eqref{item:rho*Mrs} follows from the description of $\rho^*(\lambda_i)$. Finally, recall from \cite[Remark 3.1.3]{Drupieski:2017a} that the superalgebra grading on $k[\Mrs]$ lifts to a $\Z$-grading. This $\Z$-grading endows the cohomology ring $\Hbul(\Mrs,k)$ with an internal $\Z$-grading in addition to the (external) cohomological grading. Then $\opH^2(\Mrs,k)_{\zero}$ is spanned by
	\begin{align*}
	x_i &\quad \text{for $1 \leq i \leq r$,} & \text{of internal degree $2p^i$,} \\
	\lambda_i \lambda_j & \quad \text{for $1 \leq i < j \leq r$,} & \text{of internal degree $2p^{i-1}+2p^{j-1} < 2p^r$, and} \\
	w_s & & \text{of internal degree $2p^{r+s-1}$.}
	\end{align*}
In particular, $\opH^2(\Mrs,k)_{\zero}$ has no nonzero elements of internal degrees strictly between $2p^r$ and $2p^{r+s-1}$. Now $\rho^*$ maps $w_s$ to the cohomology class of
	\begin{multline} \label{eq:rho*ws}
	-\left[ \sum_{j=1}^{p^s-1} \rho^*(\sigma_j) \otimes \rho^*(\sigma_{p^s-j}) + \sum_{i+j+p=p^s} \rho^*(\sigma_i \tau) \otimes \rho^*(\sigma_j \tau) \right] \\
	= -\left[ \sum_{j=1}^{p^s-1} \rho^*(\sigma_j) \otimes \rho^*(\sigma_{p^s-j}) + \sum_{i+j+p=p^s} \mu^2 \cdot \rho^*(\sigma_i) \tau \otimes \rho^*(\sigma_j) \tau \right].
	\end{multline}
If $0 \leq i < p^{s-1}$ and $0 \leq j < p$, then $\rho^*(\sigma_{i+jp^{s-1}})$ is given by formula \eqref{eq:phisigmaijpn} with $n=s-1$ and $c_{s-1} = b_s$. Using this observation, one can check that the component of greatest internal $\Z$-degree in \eqref{eq:rho*ws} is
	\[
	- a_0^{p^{r+s-1}} \cdot \left[ \sum_{j=1}^{p^s-1} \sigma_j \otimes \sigma_{p^s-j} + \sum_{i+j+p=p^s} \sigma_i \tau \otimes \sigma_j \tau \right],
	\]
which is of internal degree $2p^{r+s-1}$ and is a cochain representative for $a_0^{p^{r+s-1}} w_s$, and the component of smallest internal $\Z$-degree in \eqref{eq:rho*ws} is
	\[
	- (b_s)^p \cdot \left[ \sum_{i=1}^{p-1} \sigma_i \otimes \sigma_{p-i} \right],
	\]
which is of internal degree $2p^r$ and is a cochain representative for $(b_s)^p x_r$. Since $\opH^2(\Mrs,k)_{\zero}$ has no nonzero elements of internal degrees strictly between $2p^r$ and $2p^{r+s-1}$, the other homogeneous components of \eqref{eq:rho*ws} must be cohomologous to zero. Then it follows that $\rho^*(w_s) = a_0^{p^{r+s-1}} w_s + (b_s)^p x_r$, as desired. This completes the proof of \eqref{item:rho*Mrs}.

Now suppose $r \geq 2$ and $0 \neq \eta \in k$. The identification $\Hbul(\M_{r-1;s+1},k) \cong \Hbul(\Mrseta,k)$ arises from a coalgebra isomorphism $\pi^*: k[\M_{r-1;s+1}] \stackrel{\sim}{\rightarrow} k[\Mrseta]$, which in turn arises via duality from a group algebra isomorphism $\pi: k\Mrseta \stackrel{\sim}{\rightarrow} k\M_{r-1;s+1}$. The isomorphism $\pi$ is defined by $\pi(v) = v$, $\pi(u_i) = u_{i-1}$ for $1 \leq i \leq r$, and $\pi(u_0) = -\eta^{-1} \cdot u_{r-1}^s$; see \cite[Remark 3.1.8(4)]{Drupieski:2017a}. Then given integers $0 \leq i < p^{r-2}$, $0 \leq j < p^s$, and $0 \leq i_0 < p$, one can check via duality that the coalgebra isomorphism $\pi^*: k[\M_{r-1;s+1}] \rightarrow k[\Mrseta]$ satisfies
	\[
	\pi^*( \theta^i \sigma_{j+i_0 p^s}) = \frac{1}{(i_0)!} (-\eta^{-1})^{i_0} \cdot  \theta^{i_0+pi} \sigma_j \quad \text{and} \quad	\pi^*( \theta^i \sigma_{j+i_0 p^s}\tau) = \frac{1}{(i_0)!} (-\eta^{-1})^{i_0} \cdot  \theta^{i_0+pi} \sigma_j \tau.
	\]
Here as in \cite[Lemma 3.1.7]{Drupieski:2017a} we make the superalgebra identification $k[\Mrseta] \cong k[\Mrs]$.

Now let $(\mu,a_0,\ldots,a_{r-1}) \in A^{r+1}$ such that $\mu^2 = a_0^{p^r}$, and let $\rho: \M_{r;s+1} \otimes_k A \rightarrow \Mrseta \otimes_k A$ be the corresponding homomorphism as in Proposition \ref{prop:identifyhom}(\ref{item:HomMrseta}). Then to establish statement \eqref{item:rho*Mrseta} of the lemma we must describe the map in cohomology induced by the composite coalgebra homomorphism $\rho^* \circ \pi^* : A[\M_{r-1;s+1}] \rightarrow A[\Mrseta] \rightarrow A[\M_{r;s+1}]$. First, since $\pi^*(\tau) = \tau$, one gets
	\[
	(\rho^* \circ \pi^*)(y) = (\rho^* \circ \pi^*)([\tau]) = \rho^*([\tau]) = [\rho^*(\tau)] = [ \mu \cdot \tau ] = \mu \cdot y.
	\]
Next for $1 \leq i \leq r-2$ one has
	\begin{align*}
	(\rho^* \circ \pi^*)(\lambda_i) = (\rho^* \circ \pi^*)([\theta^{p^{i-1}}]) &= [ \rho^*(\theta^{p^i}) ] \\
	&= [ a_0^{p^i} \theta^{p^i} + a_1^{p^i} \theta^{p^{i+1}} + \cdots + a_{r-i-1}^{p^i} \theta^{p^{r-1}}] \\
	&= a_0^{p^i} \lambda_{i+1} + a_1^{p^i} \lambda_{i+2} + \cdots + a_{r-i-1}^{p^i} \lambda_r,
	\end{align*}
while for $i=r-1$ one has, because $\theta^{p^{r-2}} = \sigma_1$ in $k[\M_{r-1;s+1}]$ and $\theta^{p^{r-1}} = \sigma_1$ in $k[\M_{r;s+1}]$,
	\[
	(\rho^* \circ \pi^*)(\lambda_{r-1}) = (\rho^* \circ \pi^*)([\sigma_1]) = [ \rho^*(\sigma_1) ] = [ a_0^{p^{r-1}} \sigma_1] = [ a_0^{p^{r-1}} \theta^{p^{r-1}}] = a_0^{p^{r-1}} \lambda_r.
	\]
Now for $1 \leq i \leq r-2$ one gets
	\begin{align*}
	(\rho^* \circ \pi^*)(x_i) = (\rho^* \circ \pi^*)([\beta(\theta^{p^{i-1}})]) = \rho^*([\beta(\theta^{p^i})]) &= \ol{\beta}([\rho^*(\theta^{p^i})]) \\
	&= \ol{\beta}(a_0^{p^i} \lambda_{i+1} + a_1^{p^i} \lambda_{i+2} + \cdots + a_{r-i-1}^{p^i} \lambda_r) \\
	&= a_0^{p^{i+1}} x_{i+1} + a_1^{p^{i+1}} x_{i+2} + \cdots + a_{r-i-1}^{p^{i+1}} x_r,
	\end{align*}
while for $i=r-1$ one gets
	\[
	(\rho^* \circ \pi^*)(x_{r-1}) = (\rho^* \circ \pi^*)([\beta(\sigma_1)]) = \rho^*([\beta(\sigma_1)]) = \ol{\beta}([\rho^*(\sigma_1)]) = \ol{\beta}(a_0^{p^{r-1}} \lambda_r) = a_0^{p^r} x_r.
	\]
Finally,
	\begin{multline*}
	(\rho^* \circ \pi^*)(w_{s+1}) = (\rho^* \circ \pi^*) \left( -\left[ \sum_{j=1}^{p^{s+1}-1} \sigma_j \otimes \sigma_{p^{s+1}-j} + \sum_{i+j+p=p^{s+1}} \sigma_i \tau \otimes \sigma_j \tau \right] \right) \\
	= (\rho^* \circ \pi^*) \left( -\left[ \sum_{i=1}^{p-1} \sigma_{ip^s} \otimes \sigma_{(p-i)p^s} + \sum_{\substack{i_0+j_0=p-1 \\ i+j=p^s,\, i,j \geq 1}} \sigma_{i+i_0 p^s} \otimes \sigma_{j+j_0p^s} + \sum_{i+j+p=p^{s+1}} \sigma_i \tau \otimes \sigma_j \tau \right] \right)
	\end{multline*}
The first summation inside the square brackets can be rewritten as
	\[
	 \sum_{i=1}^{p-1} \sigma_{ip^s} \otimes \sigma_{(p-i)p^s} = -\sum_{i=1}^{p-1} \cbinom{p}{i} (\sigma_{p^s})^i \otimes (\sigma_{p^s})^{p-i} = -\beta(\sigma_{p^s}),
	\]
while the third summation inside the square brackets can be rewritten as
	\begin{align*}
	\sum_{i+j+p=p^{s+1}} \sigma_i \tau \otimes \sigma_j \tau &= \sum_{\substack{(i+i_0p^s)+(j+j_0p^s)+p=p^{s+1} \\ 0 \leq i,j < p^s,\, 0 \leq i_0,j_0 < p}} \sigma_{i+i_0p^s} \tau \otimes \sigma_{j+j_0p^s} \tau \\
	&= \sum_{\substack{i_0+j_0=p-1 \\ i+j+p=p^s}} \sigma_{i+i_0p^s} \tau \otimes \sigma_{j+j_0p^s} \tau + \sum_{\substack{i_0+j_0=p-2 \\ i+j+p=2p^s,\, i,j \geq 1}} \sigma_{i+i_0p^s} \tau \otimes \sigma_{j+j_0p^s} \tau.
	\end{align*}
Then
	\begin{multline*}
(\rho^* \circ \pi^*)(w_{s+1}) = \rho^* \left( \left[ (-\eta^{-1})^p \cdot \beta(\theta) - \sum_{\substack{i_0+j_0=p-1 \\ i+j=p^s,\, i,j \geq 1}} \frac{(-\eta^{-1})^{p-1}}{(i_0!)(j_0!)} \theta^{i_0} \sigma_i \otimes \theta^{j_0} \sigma_j \right. \right. \\
	\left. \left. - \sum_{\substack{i_0+j_0=p-1 \\ i+j+p=p^s}} \frac{(-\eta^{-1})^{p-1}}{(i_0!)(j_0!)} \theta^{i_0} \sigma_i \tau \otimes \theta^{j_0} \sigma_j \tau - \sum_{\substack{i_0+j_0=p-2 \\ i+j+p=2p^s,\, i,j \geq 1}} \frac{(-\eta^{-1})^{p-2}}{(i_0!)(j_0!)} \theta^{i_0} \sigma_i \tau \otimes \theta^{j_0} \sigma_j \tau \right] \right).
	\end{multline*}
The space $\opH^2(\M_{r;s+1},k)_{\zero}$ has no nonzero elements of internal $\Z$-degrees strictly between $2p^r$ and $2p^{r+(s+1)-1} = 2p^{r+s}$, so when considering the image in $\opH^2(\M_{r;s+1} \otimes_k A,A)$ of the expression above, we can ignore all terms whose internal $\Z$-degrees fall in that range (since the sum of those terms must be cohomologous to zero). In particular, from the three summations in the square brackets we only need to consider contributions of the maximum $\Z$-degree $2p^{r+s}$. Then it follows that
	\begin{flalign*}
	&(\rho^* \circ \pi^*)(w_{s+1}) = \\
	&\left[ (-\eta^{-1})^p \cdot \beta({\textstyle \sum_{i=0}^{r-1}} a_i \theta^{p^i}) + (-\eta^{-1})^p \cdot \beta(-\eta \cdot a_0^{p^{r+s-1}} \sigma_{p^s}) \phantom{\sum_{\substack{i_0+j_0=p-1 \\ i+j=p^s,\, i,j \geq 1}}} \right. \\
	&- \sum_{\substack{i_0+j_0=p-1 \\ i+j+p=p^s}} \frac{(-\eta^{-1})^{p-1}}{(i_0!)(j_0!)} (-\eta a_0^{p^{r+s-1}} \sigma_{p^s})^{i_0} (a_0^{ip^{r-1}} \sigma_i) (\mu \tau) \otimes (-\eta a_0^{p^{r+s-1}} \sigma_{p^s})^{j_0} (a_0^{jp^{r-1}} \sigma_j) (\mu \tau) \\
	&\left. - \sum_{\substack{i_0+j_0=p-2 \\ i+j+p=2p^s,\, i,j \geq 1}} \frac{(-\eta^{-1})^{p-2}}{(i_0!)(j_0!)} (-\eta a_0^{p^{r+s-1}} \sigma_{p^s})^{i_0} (a_0^{ip^{r-1}} \sigma_i) (\mu \tau) \otimes (-\eta a_0^{p^{r+s-1}} \sigma_{p^s})^{j_0} (a_0^{jp^{r-1}} \sigma_j) (\mu \tau) \right] \\
	&= \left[ (-\eta^{-1})^p \cdot \beta({\textstyle \sum_{i=0}^{r-1}} a_i \theta^{p^i}) + a_0^{p^{r+s}} \cdot \beta(\sigma_{p^s}) \phantom{\sum_{\substack{i_0+j_0=p-1 \\ i+j=p^s,\, i,j \geq 1}}} \right. \\
	&\left. - \sum_{\substack{i_0+j_0=p-1 \\ i+j+p=p^s}} a_0^{p^{r+s}} \sigma_{i_0 p^s} \sigma_i \tau \otimes \sigma_{j_0 p^s} \sigma_j \tau - \sum_{\substack{i_0+j_0=p-2 \\ i+j+p=2p^s,\, i,j \geq 1}} a_0^{p^{r+s}} \sigma_{i_0 p^s} \sigma_i \tau \otimes \sigma_{j_0 p^s} \sigma_j \tau \right] \\
	&= (-\eta^{-1})^p \cdot \ol{\beta}([{\textstyle \sum_{i=0}^{r-1}} a_i \theta^{p^i}]) + a_0^{p^{r+s}} \cdot \left[ -\sum_{j=1}^{p^{s+1}-1} \sigma_j \otimes \sigma_{p^{s+1}-j} - \sum_{i+j+p=p^{s+1}} \sigma_i \tau \otimes \sigma_j \tau \right]
	\end{flalign*}
and hence $(\rho^* \circ \pi^*)(w_{s+1}) = (-\eta^{-1})^p \cdot (a_0^p x_1 + \cdots + a_{r-1}^p x_r) + a_0^{p^{r+s}} w_{s+1}$, as desired.
\end{proof}

\begin{corollary} \label{corollary:psirelementary}
Under the identifications and assumptions of Lemma \ref{lemma:kNr}:
	\begin{enumerate}
	\item The homomorphism $\psi_r: H(\Mrone,k) \rightarrow k[\Nr(\Mrone)]$ satisfies
		\[
		\psi_r(y) = \bsmu, \quad \text{and} \quad \psi_r(x_i) = \bsa_{r-i}^{p^i} \quad \text{for $1 \leq i \leq r$.}
		\]
	
	\item Suppose $s \geq 2$. Then the homomorphism $\psi_r: H(\Mrs,k) \rightarrow k[\Nr(\Mrs)]$ satisfies
		\[
		\psi_r(y) = \bsmu, \quad \psi_r(x_i) = \bsa_{r-i}^{p^i} \quad \text{for $1 \leq i \leq r$,} \quad \text{and} \quad \psi_r(w_s) = (\bsb_s)^p.
		\]
	
	\item Suppose $r \geq 2$, let $0 \neq \eta \in k$, and identify $\Hbul(\Mrseta,k)$ as an algebra with $\Hbul(\M_{r-1;s+1},k)$. Then the homomorphism $\psi_r: H(\Mrseta,k) \rightarrow k[\Nr(\Mrseta)]$ satisfies
		\[
		\psi_r(y) = \bsmu, \quad \psi_r(x_i) = \bsa_{r-i-1}^{p^{i+1}} \quad \text{for $1 \leq i \leq r-1$,} \quad \text{and} \quad \psi_r(w_{s+1}) = (-\eta^{-1})^p \cdot \bsa_{r-1}^p.
		\]

	\item The homomorphism $\psi_r: H(\Gar,k) \rightarrow k[\Nr(\Gar)]$ satisfies $\psi_r(x_i) = \bsa_{r-i}^{p^i}$ for $1 \leq i \leq r$.

	\item The homomorphism $\psi_r: H(\Gam,k) \rightarrow k[\Nr(\Gam)]$ satisfies $\psi_r(y) = \bsmu$.
	\end{enumerate}
In general, if $k$ is algebraically closed and if $G$ is a height-$r$ infinitesimal elementary $k$-supergroup scheme, then $\psi_r: H(G,k) \rightarrow k[\NrG]$ is injective modulo nilpotents and its image contains the $p^r$-th powers of all elements in $k[\NrG]$, so the induced map between maximal ideal spectra
	\[
	\Psi : \NrG \rightarrow \abs{G} := \Max(H(G,k)).
	\]
is a homeomorphism of algebraic varieties.
\end{corollary}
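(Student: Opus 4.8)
The plan is to first establish the five displayed formulas for $\psi_r$ by unwinding the definition \eqref{eq:psir}, and then to deduce the homeomorphism from them via a standard ``inseparable isogeny'' (Quillen $F$-isomorphism) argument. By \eqref{eq:psir}, for homogeneous $z \in H(G,k)$ one has $\psi_r(z) = \ve\left(u_G^*(z \otimes 1)\right)$, where $u_G \colon \Mr \otimes_k k[\NrG] \to G \otimes_k k[\NrG]$ is the universal homomorphism and $\ve$ is induced by the map $\Hbul(\Mr,k) \to k$ sending $x_r$ and $y$ to $1$ and all other algebra generators to $0$. When $G$ is infinitesimal elementary, $u_G$ is precisely the homomorphism classified in Proposition~\ref{prop:identifyhom}, with the parameters $\mu, a_0, \dots, a_{r-1}$ (and $b_s$) taken to be the coordinate functions $\bsmu, \bsa_0, \dots, \bsa_{r-1}$ (and $\bsb_s$) on $k[\NrG]$ exhibited in Lemma~\ref{lemma:kNr}. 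I would then compute $u_G^*$ by combining Lemma~\ref{lemma:inducedmapincohomology}, which computes the maps induced in cohomology by the homomorphisms relevant to each case, with the observation that the canonical quotient $\Mr \twoheadrightarrow \M_{r;N}$ induces on cohomology the evident map fixing $x_i$, $y$, $\lambda_i$ and sending each class $w_\bullet$ to $0$; the latter holds because $w_\bullet$ lies in internal $\Z$-degree $> 2p^r$, while $\Hbul(\Mr,k)$ has no nonzero cohomological-degree-two classes of internal degree $> 2p^r$. Since $\ve$ annihilates $x_1, \dots, x_{r-1}$, all $\lambda_i$, and all $w_\bullet$ while sending $x_r$ and $y$ to $1$, only the coefficient of $x_r$ in $u_G^*(z \otimes 1)$ survives, and carrying this out case by case yields exactly the stated formulas. (For $G = \Mrseta$ one additionally transports along the coalgebra identification $\Hbul(\Mrseta,k) \cong \Hbul(\M_{r-1;s+1},k)$, already built into Lemma~\ref{lemma:inducedmapincohomology}(\ref{item:rho*Mrseta}).)

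For the homeomorphism, recall that by Corollary~\ref{corollary:infinitesimalelementary} a height-$r$ infinitesimal elementary supergroup scheme $G$ is one of $\Gar$, $\Mrone = \Gar \times \Gam$, $\Mrs$ with $s \geq 2$, $\Mrseta$ with $r \geq 2$, or (only when $r=1$) $\Gam$; for each of these $k[\NrG]$ and $\psi_r$ are given by Lemma~\ref{lemma:kNr} and the formulas above. Using the presentation of $\Hbul(G,k)$ from \cite[Proposition 3.2.1]{Drupieski:2017a}, the nilradical of $H(G,k)$ is generated by the exterior classes $\lambda_i$ together with the classes of odd cohomological degree, which square to zero since $p$ is odd; hence $H(G,k)_{red}$ is a polynomial algebra on (the classes of) finitely many of $y, x_1, x_2, \dots, w_\bullet$. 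The formulas for $\psi_r$ show that $\psi_r$ kills precisely the nilpotent generators and sends each polynomial generator to a $p$-power of one of the pairwise distinct coordinate functions $\bsmu, \bsa_0, \dots, \bsa_{r-1}, \bsb_s$ of $k[\NrG]$. Since $k[\NrG]$ is reduced, $\ker\psi_r \supseteq \sqrt{0}$ automatically; and the algebraic independence of those $p$-powers — in the hypersurface cases using $\gcd(2, p^r) = 1$ to normalize $\bsmu^2 = \bsa_0^{p^r}$ via $\bsa_0 = s^2$, $\bsmu = s^{p^r}$ — shows that the induced map $H(G,k)_{red} \to k[\NrG]$ is injective, so $\ker\psi_r = \sqrt{0}$. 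The same formulas show that $\im\psi_r$ contains $\bsmu$, every $\bsa_i^{p^{r-i}}$, and $\bsb_s^p$; since $(f+g)^{p^r} = f^{p^r} + g^{p^r}$ in characteristic $p$, every $p^r$-th power in $k[\NrG]$ is a polynomial in these elements, whence $k[\NrG]^{p^r} \subseteq \im\psi_r$.

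Granting $\ker\psi_r = \sqrt{0}$ and $k[\NrG]^{p^r} \subseteq \im\psi_r$, the finitely generated reduced $k$-algebra $k[\NrG]$ is integral over $\im\psi_r \cong H(G,k)_{red}$ — each $b \in k[\NrG]$ is a root of $T^{p^r} - b^{p^r}$ with $b^{p^r} \in \im\psi_r$ — so $\Spec k[\NrG] \to \Spec H(G,k)_{red}$ is finite, hence closed, and surjective by lying-over; it is moreover injective, since if two primes lie over a common prime $\mathfrak{q}$ of $\im\psi_r$, then any $b$ in one of them has $b^{p^r} \in \mathfrak{q}$, forcing $b$ into the other. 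As $k$ is algebraically closed and $\Max H(G,k) = \Max H(G,k)_{red} = \abs{G}$, passing to maximal ideals shows that $\Psi \colon \NrG \to \abs{G}$ is a homeomorphism. (Equivalently one may invoke the standard fact, used throughout the work of Suslin--Friedlander--Bendel, that a graded $k$-algebra homomorphism which is injective modulo nilpotents and whose image contains every $p^N$-th power induces a homeomorphism on maximal ideal spectra.) I expect the main obstacle to be the first step — the explicit computation of $u_G^*$, and in particular the $w_\bullet$-classes for $\Mrs$ and $\Mrseta$, where intermediate cochains of internal degree strictly between $2p^r$ and $2p^{r+s-1}$ must be discarded — but this is exactly the content of Lemma~\ref{lemma:inducedmapincohomology}, so once it is in hand the homeomorphism follows by routine commutative algebra.
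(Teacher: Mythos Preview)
Your proposal is correct and follows essentially the same route as the paper: the five formulas are read off from Lemma~\ref{lemma:inducedmapincohomology} combined with the definition of $\psi_r$ and the description of the inflation maps $\Hbul(G,k)\to\Hbul(\Mr,k)$, and the homeomorphism is then deduced from these formulas. The paper compresses the homeomorphism step to a single sentence (``The last statement is then an immediate consequence''), whereas you supply the standard $F$-isomorphism argument explicitly; your treatment of the inflation of $w_\bullet$ via internal degree is a valid alternative to the paper's citation of \cite[Lemma 3.2.4]{Drupieski:2017a}.
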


\begin{proof}
The five numbered statements follow from the explicit calculations in Lemma \ref{lemma:inducedmapincohomology}, the definition of the map $\psi_r$, and the descriptions in \cite[Proposition 3.2.1]{Drupieski:2017a} and \cite[Lemma 3.2.4]{Drupieski:2017a} of the inflation maps in cohomology arising from the quotient maps $\pi_{r;s}: \Mr \twoheadrightarrow \Mrs$, $q: \Mr \twoheadrightarrow \Gar$, and $q^-: \Mr \twoheadrightarrow \Gam$. The last statement is then an immediate consequence. 
\end{proof}

\subsection{Lyndon--Hochschild--Serre spectral sequences} \label{subsection:LHS}

Let $s \geq 1$. In this section we make some observations concerning various Lyndon--Hochschild--Serre (LHS) spectral sequences that arise from the canonical quotient homomorphisms $\pi_{r;s}: \Mr \twoheadrightarrow \Mrs$, $q: \Mr \twoheadrightarrow \Gar$, and $q^-: \Mr \twoheadrightarrow \Gam$.

\subsubsection{LHS Spectral Sequence for $\pi_{r;s}$} \label{subsubsec:LHSpirs}

As in the proof of \cite[Proposition 3.2.1]{Drupieski:2017a}, let $\Nrs$ be the closed subsupergroup scheme of $\Mr$ defined by $k[\Nrs] = k[\Mr]/\subgrp{\tau,\theta,\sigma_1,\ldots,\sigma_{p^s-1}}$. Then $\Nrs$ is the kernel of the canonical quotient homomorphism $\pi_{r;s} : \Mr \twoheadrightarrow \Mrs$. In particular, $\Nrs$ is normal in $\Mr$. As in \cite[I.6.6]{Jantzen:2003} one gets for each rational $\Mr$-supermodule $V$ an LHS spectral sequence
	\begin{equation} \label{eq:LHSMrs}
	E(V): \quad E_2^{i,j}(V) = \opH^i(\Mrs,\opH^j(\Nrs,V)) \Rightarrow \opH^{i+j}(\Mr,V).
	\end{equation}
Moreover, $E(k)$ is a spectral sequence of algebras, and $E(V)$ is a spectral sequence of right modules over $E(k)$ such that the product on the $E_2$-page, $E_2^{i,j}(V) \otimes E_2^{m,n}(k) \rightarrow E_2^{i+m,j+n}(V)$, identifies up to the sign $(-1)^{j \cdot m}$ with the usual cup product
	\[
	\opH^i(\Mrs,\opH^j(\Nrs,V)) \otimes \opH^m(\Mrs,\opH^n(\Nrs,k)) \rightarrow \opH^{i+m}(\Mrs,\opH^{j+n}(\Nrs,V)).
	\]
Since $\tau$ is an element of the defining ideal of $\Nrs$, it follows that $\Nrs$ is central in $\Mr$. This implies that $\Mrs$ acts trivially on the cohomology ring $\Hbul(\Nrs,k)$, and hence
	\[
	E_2^{i,j}(k) = \opH^i(\Mrs,\opH^j(\Nrs,k)) \cong \opH^i(\Mrs,k) \otimes \opH^j(\Nrs,k).
	\]
As discussed in the proof of \cite[Proposition 3.2.1]{Drupieski:2017a}, $\Hbul(\Nrs,k)$ is an exterior algebra generated by a cohomology class $[\wt{\sigma}_1] \in \opH^1(\Nrs,k)_{\zero}$, and the differential $d_2: E_2^{0,1}(k) \rightarrow E_2^{2,0}(k)$ maps $[\wt{\sigma}_1]$ to $w_s \in \opH^2(\Mrs,k)$, where by convention $w_1 = x_1-y^2$ (cf. \cite[Remark 3.2.2]{Drupieski:2017a}). By the derivation property of the differential, the map $d_2: E_2^{\bullet,1}(k) \rightarrow E_2^{\bullet+2,0}(k)$ then identifies with multiplication by $w_s$, and this is the only nontrivial differential in \eqref{eq:LHSMrs}.

Now suppose $V$ is an $\Mrs$-supermodule, considered as a rational $\Mr$-supermodule via $\pi_{r;s}$. Then $\Nrs$ acts trivially on $V$, and it follows that one has identifications
	\[
	\Hbul(\Mrs,\Hbul(\Nrs,V)) \cong \Hbul(\Mrs,V \otimes \Hbul(\Nrs,k)) \cong \Hbul(\Mrs,V) \otimes \Hbul(\Nrs,k),
	\]
which are compatible with the product structure on the spectral sequence. In particular, $E_2^{i,j}(V) = 0$ for $j \geq 2$, and it follows from the module structure of $E_2(V)$ over $E_2(k)$ that the differential
	\[
	d_2: \opH^\bullet(\Mrs,V) \otimes (k \cdot [\wt{\sigma}_1]) \cong E_2^{\bullet,1}(V) \rightarrow E_2^{\bullet+2,0}(V) = \opH^{\bullet+2}(\Mrs,V)
	\]
identifies with right multiplication on $\Hbul(\Mrs,V)$ by $w_s \in \opH^2(\Mrs,k)$, and this is the only nontrivial differential of the spectral sequence $E(V)$. This description of the differential implies that the kernel of the inflation map $\Hbul(\Mrs,V) \rightarrow \Hbul(\Mr,V)$ (which identifies with the horizontal edge map of \eqref{eq:LHSMrs}) is $\Hbul(\Mrs,V).w_s$, the image of $\Hbul(\Mrs,,V)$ under right multiplication by $w_s$.

\begin{lemma} \label{lemma:Mrfg}
Let $V$ be a finite-dimensional rational $\Mr$-supermodule. Then $\Hbul(\Mr,V)$ is finitely-generated as a (right) $\Hbul(\Mr,k)$-supermodule (or equivalently, as a $H(\Mr,k)$-supermodule).
\end{lemma}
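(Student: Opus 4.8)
The plan is to deduce the statement from the analysis of the Lyndon--Hochschild--Serre spectral sequence \eqref{eq:LHSMrs} for $\pi_{r;s}: \Mr \twoheadrightarrow \Mrs$, combined with finite generation of cohomology for the \emph{finite} supergroup scheme $\Mrs$. The first step is to reduce to the case that $V$ is inflated from $\Mrs$. Since $k[\Mr] = \bigcup_{s \geq 1} k[\Mrs]$, any finite-dimensional rational $\Mr$-supermodule is a comodule over some finite-dimensional subcoalgebra of $k[\Mr]$, hence over $k[\Mrs]$ for all $s \gg 0$ (cf.\ \cite[Remark 3.1.3(4)]{Drupieski:2017a}); thus $V$ is the inflation along $\pi_{r;s}$ of a finite-dimensional rational $\Mrs$-supermodule for $s$ large. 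Fix such an $s$. Then $\Nrs$ acts trivially on $V$, so, as recorded above, the spectral sequence $E(V)$ has only the two rows $j = 0,1$, with $E_2^{i,0}(V) = \opH^i(\Mrs,V)$ and $E_2^{i,1}(V) \cong \opH^i(\Mrs,V)$, its unique nonzero differential is $d_2 = (-)\cdot w_s$, and so $E_3 = E_\infty$, with
\[
E_\infty^{\bullet,0}(V) = \Hbul(\Mrs,V)/\Hbul(\Mrs,V)\cdot w_s \qquad\text{and}\qquad E_\infty^{\bullet,1}(V) = \{z \in \Hbul(\Mrs,V) : z \cdot w_s = 0\},
\]
and likewise with $V$ replaced by $k$.

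Next I would bring in finite generation. The ring $\Hbul(\Mrs,k)$ is a finitely generated $k$-algebra, being computed explicitly in \cite[Proposition 3.2.1]{Drupieski:2017a}, hence noetherian; and $\Hbul(\Mrs,V)$ is a finitely generated module over it by the finite-generation theorem for finite supergroup schemes \cite{Drupieski:2016a}. Consequently $E_\infty^{\bullet,0}(k) = \Hbul(\Mrs,k)/\Hbul(\Mrs,k)\cdot w_s$ is again a finitely generated $k$-algebra, $E_\infty^{\bullet,1}(k)$ is finitely generated over it, and both $E_\infty^{\bullet,0}(V)$ (a quotient of $\Hbul(\Mrs,V)$) and $E_\infty^{\bullet,1}(V)$ (a submodule of $\Hbul(\Mrs,V)$, using noetherianity) are finitely generated $\Hbul(\Mrs,k)$-modules on which $w_s$ acts as zero, hence finitely generated $E_\infty^{\bullet,0}(k)$-modules. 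Since $E(V)$ is a module over the algebra spectral sequence $E(k)$ and the product $E_\infty^{\bullet,1}(k) \otimes E_\infty^{\bullet,1}(V)$ lands in the vanishing row $j \geq 2$, it follows that $E_\infty(V) = \bigoplus_{i,j} E_\infty^{i,j}(V)$ is finitely generated as a module over $E_\infty(k) = \bigoplus_{i,j} E_\infty^{i,j}(k)$.

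Finally, because the spectral sequence converges multiplicatively, there is an isomorphism of graded $k$-algebras $E_\infty(k) \cong \gr \Hbul(\Mr,k)$ and an isomorphism of graded modules $E_\infty(V) \cong \gr \Hbul(\Mr,V)$ over it, for the associated filtrations, which in each cohomological degree have length at most two. Lifting a finite homogeneous $E_\infty(k)$-generating set of $E_\infty(V)$ to $\Hbul(\Mr,V)$ and running the standard argument that finite generation passes from the associated graded to the filtered object (trivial here, since the filtration is finite) shows that $\Hbul(\Mr,V)$ is finitely generated over $\Hbul(\Mr,k)$; the equivalence with finite generation over $H(\Mr,k)$ then follows because $\Hbul(\Mr,k)$ is module-finite over the subalgebra $H(\Mr,k)$.

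I do not expect a serious obstacle: everything rests on facts already assembled in Section~\ref{subsubsec:LHSpirs}. The step needing the most care is the transfer of finite generation through the spectral sequence in the last two paragraphs, i.e.\ verifying that the $E_\infty(k)$-module structure on $E_\infty(V)$ is the one obtained by restriction of scalars along $\Hbul(\Mrs,k) \twoheadrightarrow E_\infty^{\bullet,0}(k)$, and that it lifts compatibly to the cup-product module structure of $\Hbul(\Mr,V)$ over $\Hbul(\Mr,k)$; this is routine precisely because only two rows survive to $E_\infty$, so there are no convergence subtleties.
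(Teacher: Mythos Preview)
Your proposal is correct and follows essentially the same approach as the paper: reduce to $V$ inflated from $\Mrs$, use the two-row LHS spectral sequence \eqref{eq:LHSMrs}, invoke finite generation of $\Hbul(\Mrs,V)$ over $\Hbul(\Mrs,k)$, and transfer finite generation through the spectral sequence to the abutment. The only difference is presentational: the paper shows that $E_2(V)$ is finite over $E_2^{\bullet,0}(k)=\Hbul(\Mrs,k)$ and then cites the standard spectral-sequence lemma \cite[Lemma~1.6]{Friedlander:1997} together with \cite[III.2.9 Corollary~1]{Bourbaki:1998} to pass to the abutment, whereas you unpack that lemma by computing $E_\infty$ explicitly and lifting generators by hand.
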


\begin{proof}
First, let $\rho: \Mr \rightarrow GL(V)$ be the homomorphism that affords $V$ the structure of a rational $\Mr$-supermodule. Since $V$ is finite-dimensional, then $GL(V) \cong \GLmn$ for some nonnegative integers $m$ and $n$. In particular, $GL(V)$ is an algebraic $k$-supergroup scheme, so $\rho$ factors for $s \gg 0$ through the quotient map $\pi_{r;s}: \Mr \twoheadrightarrow \Mrs$ by \cite[Remark 3.1.3(4)]{Drupieski:2017a}. Then without loss of generality we may assume that $V$ is an $\Mrs$-supermodule for some fixed integer $s \geq 1$, and we may assume that the $\Mr$-supermodule structure on $V$ is obtained by pulling back along $\pi_{r;s}$.

Now consider the LHS spectral sequence $E_2(V) \Rightarrow \Hbul(\Mr,V)$ as in the paragraph preceding the lemma, and recall from \cite[Theorem 1.1]{Drupieski:2016} that $\Hbul(\Mrs,V)$ is finite as a module under the cup product action of $\Hbul(\Mrs,k)$. Since $\Hbul(\Nrs,k)$ is finite-dimensional, this implies that $E_2(V)$ is finite under the action of $E_2^{\bullet,0}(k) = \Hbul(\Mrs,k)$, and hence implies by \cite[Lemma 1.6]{Friedlander:1997} and \cite[III.2.9 Corollary 1]{Bourbaki:1998} that the inflation map $\Hbul(\Mrs,k) \rightarrow \Hbul(\Mr,k)$ (which identifies with the horizontal edge map of the spectral sequence) makes $\Hbul(\Mr,V)$ into a finitely-generated $\Hbul(\Mrs,k)$-supermodule. In particular, $\Hbul(\Mr,V)$ is finite under the cup product action of $\Hbul(\Mr,k)$. Since $\Hbul(\Mr,k)$ and $H(\Mr,k)$ differ only by nilpotent elements, the last statement is equivalent to saying that $\Hbul(\Mr,V)$ is finite as a $H(\Mr,k)$-supermodule.
\end{proof}

\subsubsection{LHS Spectral Sequence for $q$} \label{subsubsec:LHSq}

The kernel of the quotient homomorphism $q: \Mr \twoheadrightarrow \Gar$ is the closed subsupergroup scheme $Q$ of $\Mr$ defined by $k[Q] = k[\Mr]/\subgrp{\theta}$. Let $\wt{\tau}$ and $\wt{\sigma}_i$ denote the images in $k[Q]$ of the generators $\tau , \sigma_{ip} \in k[\Mr]$, respectively. Then the set $\set{\wt{\sigma}_i, \wt{\sigma}_i \wt{\tau} : i \in \N}$ is a homogeneous basis for $k[Q]$. Now let $T(v) \cong k[v]$ be the tensor (super)algebra over $k$ generated by the element $v$ of odd superdegree. We consider the elements of $k[Q]$ as linear functionals on $T(v)$ such that $\wt{\sigma}_i(v^j) = (-1)^i \cdot \delta_{2i,j}$ and $(\wt{\sigma}_i \wt{\tau})(v^j) = \delta_{2i+1,j}$, where $\delta_{i,j}$ denotes the usual Kronecker delta. In this way, one can then check that the Hochschild complex $C^\bullet(Q,k) = k[Q]^{\otimes \bullet}$ for $Q$ identifies with the cobar complex for $T(v)$ as defined in \cite{Priddy:1970}. Then by \cite[Theorem 2.5]{Priddy:1970}, the cohomology ring $\Hbul(Q,k)$ is the exterior algebra over $k$ generated by the cohomology class $[\wt{\tau}] \in \opH^1(Q,k)_{\one}$, i.e., $\Hbul(Q,k) \cong \Lambda([\wt{\tau}])$. Since the generator $y \in \Hbul(\Mr,k)$ is by definition the cohomology class of the cochain $\tau \in k[\Mr]$, it immediately follows that the restriction map in cohomology $\Hbul(\Mr,k) \rightarrow \Hbul(Q,k)$ is surjective and sends $y$ to $[\wt{\tau}]$.

Now consider the LHS spectral sequence for the group extension $Q \hookrightarrow \Mr \stackrel{q}{\twoheadrightarrow} \Gar$:
	\begin{equation} \label{eq:LHSGar}
	E_2^{i,j}(k) = \opH^i(\Gar,\opH^j(Q,k)) \Rightarrow \opH^{i+j}(\Mr,k).
	\end{equation}
Since $\opH^0(Q,k)$ and $\opH^1(Q,k)$ are each one-dimensional, and since $\Gar$ is unipotent, it follows that $\Gar$ acts trivially on $\Hbul(Q,k)$. Then $E_2^{i,j}(k) \cong \opH^i(\Gar,k) \otimes \opH^j(Q,k)$, and the only nontrivial differential in \eqref{eq:LHSGar} is $d_2: E_2^{\bullet,1} \rightarrow E_2^{\bullet+2,0}$. The horizontal edge map of \eqref{eq:LHSGar} identifies with the inflation map $\Hbul(\Gar,k) \rightarrow \Hbul(\Mr,k)$, and so is injective by \cite[Lemma 2.3.4]{Drupieski:2017a}. Then $d_2 \equiv 0$.

Finally, let $V$ be a finite-dimensional $\Gar$-supermodule, considered as a rational $\Mr$-super\-module via the quotient $q: \Mr \twoheadrightarrow \Gar$. Then as in Section \ref{subsubsec:LHSpirs} above, the $E_2$-page of the LHS spectral sequence $E_2^{i,j}(V) = \opH^i(\Gar,\opH^j(Q,V)) \Rightarrow \opH^{i+j}(\Mr,V)$ admits a factorization
	\[
	E_2^{i,j}(V) \cong \opH^i(\Gar,V) \otimes \opH^j(Q,k).
	\]
Using this, the right module structure of $E_2(V)$ over $E_2(k)$, the derivation property of the differential, and the fact that the differential on $E_2(k)$ is trivial, it follows that the differential on $E_2(V)$ is trivial as well. Consequently, the inflation map $\Hbul(\Gar,V) \rightarrow \Hbul(\Mr,V)$ is an injection.

\subsubsection{LHS Spectral Sequence for $q^-$} \label{subsubsec:LHSq-}

The kernel of the quotient homomorphism $q^-: \Mr \twoheadrightarrow \Gam$ is the closed subsupergroup scheme $Q^-$ of $\Mr$ defined by $k[Q^-] = k[\Mr]/\subgrp{\tau}$. In particular, $Q^-$ is abelian because $k[Q^-]$ is cocommutative in the sense of Hopf superalgebras. Let $\wt{\theta}$ and $\wt{\sigma}_i$ denote the images in $k[Q^-]$ of the generators $\theta , \sigma_i \in k[\Mr]$, respectively. The supergroup $\Nrone$ defined in Section \ref{subsubsec:LHSpirs} is a (normal) subsupergroup of $Q^-$ because $k[Q^-]/\subgrp{\wt{\theta}} \cong k[\Mr]/\subgrp{\theta,\tau} = k[\Nrone]$, and $Q^-/\Nrone \cong \Gar$. Consider the LHS spectral sequence for the extension $\Nrone \hookrightarrow Q^- \twoheadrightarrow \Gar$:
	\begin{equation} \label{eq:LHSQ-}
	E_2^{i,j} = \opH^i(\Gar,\opH^j(\Nrone,k)) \Rightarrow \opH^{i+j}(Q^-,k).
	\end{equation}
Since $Q^-$ is abelian, the action of $Q^-/\Nrone \cong \Gar$ on $\Hbul(\Nrone,k)$ (induced by conjugation) is trivial, and hence $E_2^{i,j} \cong \opH^i(\Gar,k) \otimes \opH^j(\Nrone,k)$. From the discussion in Section \ref{subsubsec:LHSpirs} above, the cohomology ring $\Hbul(\Nrone,k)$ is an exterior algebra generated by the cohomology class $[\wh{\sigma}] \in \opH^1(\Nrone,k)$, where $\wh{\sigma}$ denotes the image of $\wt{\sigma}_p \in k[Q^-]$ under the quotient map $k[Q^-] \twoheadrightarrow k[Q^-]/\subgrp{\wh{\theta}} = k[\Nrone]$. The differential $d_2: E_2^{0,1} \rightarrow E_2^{2,0}$ is induced by the differential $\partial$ on the Hochschild complex for $Q^-$. Then in terms of the map $\beta$ of \eqref{eq:beta},
	\[ \textstyle
	d_2([\wh{\sigma}]) = [\partial(\wt{\sigma}_p)] = [-\sum_{i=1}^{p-1} \wt{\sigma}_i \otimes \wt{\sigma}_{p-i}] = [\beta(\wt{\theta}^{p^{r-1}})] = x_r \in \opH^2(\Gar,k).
	\]
Since $d_2: E_2^{\bullet,1} \rightarrow E_2^{\bullet+2,0}$ is the only nontrivial differential in \eqref{eq:LHSQ-}, and since $x_r$ is not a zero divisor in $\Hbul(\Gar,k)$, this implies that the inflation map $\Hbul(\Gar,k) \rightarrow \Hbul(Q^-,k)$, which identifies with the horizontal edge map of \eqref{eq:LHSQ-}, is surjective with kernel generated by $x_r$, i.e.,
	\[
	\Hbul(Q^-,k) \cong k[x_1,x_2,\ldots,x_{r-1}] \gotimes \Lambda(\lambda,\ldots,\lambda_r).
	\]
Moreover, replacing $\theta$ by $\wt{\theta}$, cochain representatives for the generators of $\Hbul(Q^-,k)$ are given by the same formulas as in \cite[Proposition 3.2.1]{Drupieski:2017a}.

Now consider the LHS spectral sequence (with trivial coefficients) for the supergroup extension $Q^- \hookrightarrow \Mr \twoheadrightarrow \Gam$. Since $\tau$ is an element of the defining ideal of $Q^-$, it follows that $Q^-$ is central in $\Mr$, and hence the spectral sequence takes the form
	\begin{equation} \label{eq:LHSGa-}
	E_2^{i,j} \cong \opH^i(\Gam,k) \otimes \opH^j(Q^-,k) \Rightarrow \opH^{i+j}(\Mr,k).
	\end{equation}
This spectral sequence is not confined simply to two rows, but it follows from inspecting the explicit formulas for the cochain representatives of the generators that the restriction map $\Hbul(\Mr,k) \rightarrow \Hbul(Q^-,k)$, which identifies with the vertical edge map of \eqref{eq:LHSGa-}, is surjective with kernel generated by $y \in \Hbul(\Mr,k)$. Then the column $E_2^{0,\bullet}$ in \eqref{eq:LHSGa-} consists of permanent cycles, which implies by the derivation property of the differential that all differentials in \eqref{eq:LHSGa-} are zero.

Finally, let $V$ be a finite-dimensional $\Gam$-supermodule, considered as a rational $\Mr$-supermodule via the quotient $q^-: \Mr \rightarrow \Gam$. Then by reasoning precisely analogous to that in Section \ref{subsubsec:LHSq} above, it follows that the LHS spectral sequence
	\[
	E_2^{i,j}(V) = \opH^i(\Gam,\opH^j(Q^-,V)) \Rightarrow \opH^{i+j}(\Mr,V)
	\]
admits the factorization $E_2^{i,j}(V) \cong \opH^i(\Gam,V) \otimes \opH^j(Q^-,k)$ on the $E_2$-page, the differentials are all trivial, and the inflation map $\Hbul(\Gam,V) \rightarrow \Hbul(\Mr,V)$ is an injection.

\subsection{Support sets for modules} \label{subsection:supportset}

\begin{definition}
Let $G$ be an affine $k$-supergroup scheme. Given a rational $G$-super\-module $V$, define the injective dimension of $V$ (in the category of rational $G$-super\-modules) by
	\[
	\id_G(V) = \inf \set{ n \geq 0 : \Ext_G^i(-,V) \equiv 0 \text{ for all } i > n}.
	\]
\end{definition}

\begin{lemma} \label{lemma:idequivalent}
Let $G$ be an affine $k$-supergroup scheme, and let $V$ be a rational $G$-supermodule. Then the following are equivalent:
	\begin{enumerate}
	\item $\id_G(V) \leq n < \infty$
	\item \label{item:extvanishing} $\Ext_G^i(L,V) = 0$ for all $i > n$ and all irreducible rational $G$-supermodules $L$.
	\item There exists an injective resolution of $V$,
		\[
		0 \rightarrow V \rightarrow Q_0 \rightarrow Q_1 \rightarrow Q_2 \rightarrow \cdots,
		\]
	such that $Q_i = 0$ for all $i > n$.
	\end{enumerate}
\end{lemma}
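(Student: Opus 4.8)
The plan is to prove the cyclic chain of implications $(1) \Rightarrow (2) \Rightarrow (3) \Rightarrow (1)$, only the middle step requiring any real argument. The implication $(1) \Rightarrow (2)$ is immediate from the definition of $\id_G(V)$, since $\Ext_G^i(L,V)$ is a particular value of the functor $\Ext_G^i(-,V)$. For $(3) \Rightarrow (1)$: the groups $\Ext_G^i(-,V)$ may be computed from any injective resolution of $V$ in the category of rational $G$-supermodules, so if $V$ admits a resolution $0 \to V \to Q_0 \to \cdots \to Q_n \to 0$ with $Q_i = 0$ for $i > n$, then $\Ext_G^i(-,V) \equiv 0$ for $i > n$, i.e.\ $\id_G(V) \leq n$.

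For $(2) \Rightarrow (3)$, recall that the category of rational $G$-supermodules is the category of $k[G]$-super\-comodules; it has enough injectives (every supercomodule embeds in a cofree one) and is locally finite (every supercomodule is the directed union of its finite-dimensional subsuper\-comodules). Choose an injective resolution $0 \to V \to Q_0 \to Q_1 \to \cdots$ and let $K$ be the $n$-th cosyzygy of $V$, i.e.\ $K = \im(Q_{n-1} \to Q_n)$, with $K = V$ when $n = 0$. Splicing yields an exact sequence $0 \to V \to Q_0 \to \cdots \to Q_{n-1} \to K \to 0$ with each $Q_i$ injective, so dimension shifting along it gives, for every rational $G$-supermodule $M$, a natural isomorphism $\Ext_G^1(M,K) \cong \Ext_G^{n+1}(M,V)$. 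If $K$ is injective, then replacing the tail $Q_n \to Q_{n+1} \to \cdots$ of the resolution by $K \to 0 \to 0 \to \cdots$ produces the resolution of $V$ demanded by $(3)$. Thus everything reduces to the claim that $\Ext_G^1(M,K) = 0$ for every rational $G$-supermodule $M$, given only that $\Ext_G^1(L,K) \cong \Ext_G^{n+1}(L,V) = 0$ for every irreducible $L$.

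I would prove this claim in two stages. First, every finite-dimensional rational $G$-super\-module $M$ has a finite composition series, so an induction on its length---using, for a short exact sequence $0 \to M' \to M \to M'' \to 0$, the exactness of $\Ext_G^1(M'',K) \to \Ext_G^1(M,K) \to \Ext_G^1(M',K)$ together with the base case of an irreducible module---shows $\Ext_G^1(M,K) = 0$ for all finite-dimensional $M$. Second, to reach arbitrary $M$ I would verify the injectivity of $K$ directly via a Baer-type criterion: given a subsupermodule $L \subseteq M$ and a homomorphism $f \colon L \to K$, a Zorn's-lemma argument on the poset of partial extensions produces a maximal extension $g \colon N \to K$ with $L \subseteq N \subseteq M$; if $N \neq M$, pick $m \in M \setminus N$, note that $m$ lies in a finite-dimensional subsupermodule $F$ by local finiteness, and use the exact sequence $\Hom_G(N + F, K) \to \Hom_G(N, K) \to \Ext_G^1\big((N+F)/N,\, K\big) = 0$---the last term vanishing since $(N+F)/N \cong F/(N \cap F)$ is finite-dimensional---to extend $g$ across $N + F \supsetneq N$, contradicting maximality. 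Hence $N = M$, and $K$ is injective.

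The only genuinely non-formal step is this last bootstrap from ``$\Ext_G^1(-,K)$ annihilates every irreducible'' to ``$K$ is injective''; all the rest is dimension shifting and d\'evissage. The super-grading causes no trouble: a parity shift of an irreducible supermodule is again irreducible, and the homological algebra of $k[G]$-supercomodules is formally identical to that of ordinary comodules. The hypothesis that $G$-supermodules are rational (equivalently, locally finite) enters only through the existence of finite composition series for finite-dimensional objects and, crucially, through the reduction of the injectivity test for $K$ to finite-dimensional modules in the Baer-type argument.
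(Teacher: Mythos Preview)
Your proof is correct but takes a genuinely different route from the paper's. The paper argues $(2)\Rightarrow(3)$ by choosing a \emph{minimal} injective resolution $Q_\bullet$ of $V$: minimality gives $\Ext_G^i(L,V)=\Hom_G(L,Q_i)$ for each irreducible $L$, so the hypothesis forces $\soc_G(Q_i)=0$ for $i>n$, and hence $Q_i=0$ there (since in a locally finite category every nonzero object has nonzero socle). Your argument instead avoids minimal resolutions altogether: you dimension-shift to the $n$-th cosyzygy $K$, reduce injectivity of $K$ to the vanishing of $\Ext_G^1(-,K)$ on finite-dimensional modules via a Baer--Zorn argument exploiting local finiteness, and then d\'evissage down to irreducibles. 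The paper's approach is shorter and more conceptual once one has minimal injective resolutions on hand; yours is more self-contained, requiring only enough injectives and local finiteness rather than the existence and standard properties of minimal resolutions. Both ultimately rest on the same structural feature of the category---local finiteness---but package it differently.
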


\begin{proof}
The first condition evidently implies the second, and the third condition evidently implies the first, so suppose that $\Ext_G^i(L,V) = 0$ for all $i > n$ and all irreducible rational $G$-supermodules $L$. Let $Q_\bullet$ be a minimal injective resolution of $V$. The minimality of $Q_\bullet$ implies for each irreducible $G$-supermodule $L$ that $\Ext_G^i(L,V) = \Hom_G(L,Q_i)$, and hence $\Hom_G(L,Q_i) = 0$ for all $i > n$ and all irreducibles $L$. Then $\soc_G(Q_i) = 0$ for $i > n$, which implies that $Q_i = 0$ for $i > n$.
\end{proof}

Lemma \ref{lemma:Mrquotient} and the local finiteness of rational representations imply that $\Mr$ is unipotent, and hence that up to isomorphism and parity flip, the only irreducible $\Mr$-supermodule is the trivial module $k$. Thus for $G = \Mr$, it suffices to consider just $L = k$ in Lemma \ref{lemma:idequivalent}\eqref{item:extvanishing}.

\begin{lemma} \label{lemma:idEndV}
Let $G$ be an affine $k$-supergroup scheme, and let $V$ be a finite-dimensional rational $G$-supermodule. Then
	\[
	\id_G(V) = \id_G(V \otimes V^\#) = \id_G(\Hom_k(V,V)).
	\]
\end{lemma}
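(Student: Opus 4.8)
The plan is to deduce both equalities from two basic facts: that tensoring with a finite-dimensional rational supermodule cannot increase injective dimension, and that $V$ is a direct summand of $\Hom_k(V,V)\otimes V$. The second equality in the statement requires nothing: there is a standard even isomorphism of rational $G$-supermodules $V\otimes V^\# \simrightarrow \Hom_k(V,V)$ (the map sending $v\otimes f$ to the operator $w\mapsto (-1)^{\ol{f}\,\ol{w}}f(w)\cdot v$), so $\id_G(V\otimes V^\#)=\id_G(\Hom_k(V,V))$ is immediate.

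The main tool is the sub-lemma: for any rational $G$-supermodule $M$ and any \emph{finite-dimensional} rational $G$-supermodule $W$, one has $\id_G(M\otimes W)\le \id_G(M)$. To prove it, first I would check that if $Q$ is an injective rational $G$-supermodule then so is $Q\otimes W$: the tensor--hom adjunction gives a natural isomorphism $\Hom_G(-,Q\otimes W)\cong \Hom_G(-\otimes W^\#,Q)$ (using finite-dimensionality of $W$), and since $(-)\otimes W^\#$ is exact (it is just tensoring over $k$ with a vector space) and $\Hom_G(-,Q)$ is exact, the composite $\Hom_G(-,Q\otimes W)$ is exact. This is the super analogue of \cite[I.3.10]{Jantzen:2003}. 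Granting this, if $\id_G(M)=n<\infty$ pick an injective resolution $0\to M\to Q_0\to\cdots\to Q_n\to 0$ as in Lemma \ref{lemma:idequivalent}; tensoring over $k$ with $W$ is exact and keeps the terms injective, so $0\to M\otimes W\to Q_0\otimes W\to\cdots\to Q_n\otimes W\to 0$ resolves $M\otimes W$, giving $\id_G(M\otimes W)\le n$ (the case $\id_G(M)=\infty$ being vacuous).

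Now the first equality. Applying the sub-lemma with $M=V$ and $W=V^\#$ yields $\id_G(V\otimes V^\#)\le \id_G(V)$. For the reverse inequality I would exhibit $V$ as a direct summand of $\Hom_k(V,V)\otimes V\cong (V\otimes V^\#)\otimes V$: the $k$-linear map $V\to \Hom_k(V,V)\otimes V$, $v\mapsto \id_V\otimes v$, is a $G$-supermodule homomorphism because $\id_V$ is $G$-invariant, and composing with the evaluation $\Hom_k(V,V)\otimes V\to V$ recovers $\id_V$. Since $\Ext_G^i(L,-)$ takes split monomorphisms to split monomorphisms, $\Ext_G^i(L,V)$ is a direct summand of $\Ext_G^i(L,(V\otimes V^\#)\otimes V)$ for every irreducible $L$, so Lemma \ref{lemma:idequivalent} gives $\id_G(V)\le \id_G((V\otimes V^\#)\otimes V)$. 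A second application of the sub-lemma, with $M=V\otimes V^\#$ and $W=V$, gives $\id_G((V\otimes V^\#)\otimes V)\le \id_G(V\otimes V^\#)$. Chaining the three inequalities produces $\id_G(V)\le \id_G(V\otimes V^\#)\le \id_G(V)$, as desired.

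The only place demanding any care is the claim inside the sub-lemma that injective rational $G$-supermodules stay injective after tensoring with a finite-dimensional supermodule, together with the super signs in the relevant tensor--hom adjunction and in the identification $V\otimes V^\#\cong \Hom_k(V,V)$; I do not anticipate any genuine difficulty beyond bookkeeping, since all of this is routine in the graded setting.
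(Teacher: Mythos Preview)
Your proof is correct and takes essentially the same approach as the paper: both use that tensoring with a finite-dimensional supermodule preserves injectives (hence cannot raise $\id_G$), and both exhibit $V$ as a direct summand of $V\otimes V^\#\otimes V$ via the unit-and-evaluation composite. The only difference is packaging---you abstract the first observation as a reusable sub-lemma, while the paper invokes it directly each time.
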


\begin{proof}
First observe that since $V$ is finite-dimensional, the natural map $\alpha: V \otimes V^\# \rightarrow \Hom_k(V,V)$, defined for $v,v' \in V$ and $\phi \in V^\#$ by $\alpha(v \otimes \phi)(v')= \phi(v') \cdot v$, is a $G$-supermodule isomorphism. Then it suffices to show that  $\id_G(V \otimes V^\#) \leq \id_G(V)$ and $\id_G(V) \leq \id_G(V \otimes V^\#)$. To prove the first inequality, suppose $\id_G(V) = n < \infty$. Then by Lemma \ref{lemma:idequivalent}, there exists an injective resolution $Q_\bullet$ of $V$ with the property that $Q_i = 0$ for $i > n$. Then $V^\# \otimes Q_\bullet$ is an injective resolution of $V^\# \otimes V$ (cf.\ \cite[I.3.10]{Jantzen:2003}). The supertwist map $T: V \otimes V^\# \rightarrow V^\# \otimes V$, defined by $T(v \otimes \phi) = (-1)^{\ol{v} \cdot \ol{\phi}} \phi \otimes v$, is a $G$-supermodule isomorphism, so we deduce that there exists an injective resolution $Q_\bullet'$ of $V \otimes V^\# \cong V^\# \otimes V$ such that $Q_i' = 0$ for $i > n$. Then $\id_G(V \otimes V^\#) \leq n = \id_G(V)$.

For the other inequality, suppose $\id_G(V \otimes V^\#) = n < \infty$. Then by the same type of reasoning as in the previous paragraph, $\id_G((V \otimes V^\#) \otimes V) \leq n < \infty$, and hence $\Ext_G^i(L,V \otimes V^\# \otimes V) = 0$ for all $i > n$ and all irreducible $G$-supermodules $L$. Define $\tau: k \rightarrow \Hom_k(V,V)$ by $\tau(\lambda) = \lambda \cdot 1_V$, and let $c: V^\# \otimes V \rightarrow k$ be the contraction map defined by $c(\phi \otimes v) = \phi(v)$. It is straightforward to check that $\tau$ and $c$ are both $G$-supermodule homomorphisms. If $\set{v_1,\ldots,v_m}$ is a fixed homogeneous basis for $V$, and if $\set{v_1^*,\ldots,v_m^*}$ is the corresponding dual basis such that $v_i^*(v_j) = \delta_{i,j}$, then under the identification $\Hom_k(V,V) \cong V \otimes V^\#$ the map $\tau$ satisfies $\tau(\lambda) = \lambda \cdot (\sum_{i=1}^m v_i \otimes v_i^*)$. Now consider the composite $G$-supermodule homomorphism
	\[
	V \cong k \otimes V \stackrel{\tau \otimes 1}{\longrightarrow} V \otimes V^\# \otimes V \stackrel{1 \otimes c}{\longrightarrow} V \otimes k \cong V.
	\]
It is straightforward to check (e.g., by first considering the effect on basis vectors and then extending linearly) that this composite is the identity. Consequently, $V$ is a $G$-supermodule direct summand of $V \otimes V^\# \otimes V$. Then the equality $\Ext_G^i(L,V \otimes V^\# \otimes V) = 0$ for $i > n$ implies that $\Ext_G^i(L,V) = 0$ for $i > n$, and hence implies that $\id_G(V) \leq n = \id_G(V \otimes V^\#)$.
\end{proof}

Given a supergroup scheme homomorphism $\phi: G \rightarrow H$ and a rational $H$-supermodule $V$, let $\phi^* V$ denote the rational $G$-supermodule obtained from $V$ by pulling back along $\phi$.

\begin{definition} \label{definition:supportset}
Let $G$ be an algebraic $k$-supergroup scheme. Given a rational $G$-supermodule $M$, define the support set $\NoneG_M \subset \NoneG$ by
	\[
	\NoneG_M = \set{ \phi \in \NoneG = \bfHom(\Mone,G)(k) : \id_{\Mone}(\phi^* M) = \infty}.
	\]
\end{definition}

Applying Lemma \ref{lemma:idequivalent} and the observation immediately following it, Lemma \ref{lemma:idEndV}, and the adjoint associativity isomorphism, it follows that if $M$ is finite-dimensional, then
	\[
	\NoneG_M = \set{ \phi \in \NoneG : \Ext_{\Mone}^i(\phi^*M,\phi^*M) \neq 0 \text{ for infinitely many $i \geq 1$}}.
	\]
In Section \ref{subsection:supportheightone} we will show that if $G$ is a height-one infinitesimal elementary supergroup scheme and if $M$ is a finite-dimensional $G$-supermodule, then $\NoneG_M$ is closed in $\NoneG$ and identifies with the cohomological support variety $\abs{G}_M$. More generally, in Section \ref{subsection:BIKPapplications} we will discuss how we expect that results recently announced by Benson, Iyengar, Krause, and Pevtsova (BIKP) can be applied to obtain more general results concerning the cohomological spectrum $\abs{G}$ and the support varieties $\abs{G}_M$ when $G$ is a more general infinitesimal unipotent supergroup scheme.

\begin{question} \label{question:supportsetclosed}
Suppose $k$ is algebraically closed. Let $G$ be an arbitrary height-$1$ infinitesimal $k$-supergroup scheme, and let $M$ be a finite-dimensional $G$-supermodule. Is $\NoneG_M$ Zariski closed in $\NoneG$?
\end{question}

As mentioned in the introduction to the paper, our definition of the support set $\NoneG_M$ is inspired by similar definitions appearing in the literature in the context of commutative local rings \cite{Avramov:1989,Avramov:2000,Jorgensen:2002}, and which were brought to our attention by way of a talk by Srikanth Iyengar. At present we do not understand how the existing support theory can be related to the rational cohomology of $\Mone$, but some obvious intermediate actors are the group algebra
	\[
	k\Mone = k[[u,v]]/\subgrp{u^p+v^2}
	\]
and its polynomial subalgebra
	\[
	\Pone = k[u,v]/\subgrp{u^p+v^2}.
	\]
In the rest of this section we will consider the more general question of the relationship between rational cohomology for $\Mr$ and cohomology for its polynomial subalgebra $\Pr$.

Let ${}_{\Mr}\fsmod$ denote the category of rational left $\Mr$-supermodules, and let ${}_{\Pr}\fsmod$ denote the category of left $\Pr$-supermodules. It follows from Proposition \ref{proposition:HomMrGLmn} that ${}_{\Mr}\fsmod$ identifies with the full subcategory of ${}_{\Pr}\fsmod$ consisting of the objects on which the generator $u_{r-1} \in \Pr$ acts via a locally nilpotent endomorphism. The exact inclusion of categories ${}_{\Mr}\fsmod \hookrightarrow {}_{\Pr}\fsmod$ then induces for each pair of rational $\Mr$-supermodules $M$ and $N$ a linear map on cohomology groups
	\begin{equation} \label{eq:ExtMrmap}
	\Ext_{\Mr}^\bullet(M,N) \rightarrow \Ext_{\Pr}^\bullet(M,N).
	\end{equation}
We will show that, at least when $M$ and $N$ are finite-dimensional, \eqref{eq:ExtMrmap} is an isomorphism.

\begin{proposition} \label{proposition:rationaltoarbitrary}
Let $M$ and $N$ be finite-dimensional rational $\Mr$-supermodules. Then the map in cohomology \eqref{eq:ExtMrmap} induced by the inclusion ${}_{\Mr}\fsmod \hookrightarrow {}_{\Pr}\fsmod$ is an isomorphism.
\end{proposition}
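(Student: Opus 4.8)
The plan is to realize ${}_{\Mr}\fsmod$ as a hereditary torsion class inside ${}_{\Pr}\fsmod$ and to exploit the resulting adjunction. Since $u_{r-1} \in \Pr$ is an even central element, the full subcategory ${}_{\Mr}\fsmod$ of $\Pr$-supermodules on which $u_{r-1}$ acts locally nilpotently is closed under subobjects, quotients, extensions, and arbitrary coproducts; hence the inclusion $\iota : {}_{\Mr}\fsmod \hookrightarrow {}_{\Pr}\fsmod$ is exact and admits a right adjoint $\Phi$, namely the functor $W \mapsto \Gamma_{(u_{r-1})}(W) = \set{w \in W : u_{r-1}^N w = 0 \text{ for } N \gg 0}$ taking the $u_{r-1}$-torsion submodule. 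The adjunction furnishes, for all $M \in {}_{\Mr}\fsmod$ and $W \in {}_{\Pr}\fsmod$, a natural isomorphism $\Hom_{\Pr}(\iota M, W) \cong \Hom_{\Mr}(M, \Phi W)$, and since $\iota$ is exact, $\Phi$ carries injective $\Pr$-supermodules to injective objects of ${}_{\Mr}\fsmod$.

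Next I would fix a finite-dimensional $N \in {}_{\Mr}\fsmod$ together with an injective resolution $0 \to N \to J^0 \to J^1 \to \cdots$ in ${}_{\Pr}\fsmod$, and show that $\Phi J^\bullet$ is an injective resolution of $N$ in ${}_{\Mr}\fsmod$. The objects $\Phi J^i$ are injective by the previous paragraph, and the cohomology of $\Phi J^\bullet$ computes the right derived functors $R^i\Phi(N)$. But $\Phi = \Gamma_{(u_{r-1})}$ is local cohomology with respect to the principal ideal $(u_{r-1})$, so $R^i\Phi(N) = 0$ for $i \geq 2$, while $R^0\Phi(N) = \Gamma_{(u_{r-1})}(N) = N$ and $R^1\Phi(N) = \coker(N \to N[u_{r-1}^{-1}])$; as $N$ is $u_{r-1}$-torsion the localization $N[u_{r-1}^{-1}]$ vanishes, so $\Phi J^\bullet$ is a resolution of $N$ as claimed.

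Granting this, for finite-dimensional $M,N$ the adjunction gives an isomorphism of complexes $\Hom_{\Mr}(M, \Phi J^\bullet) \cong \Hom_{\Pr}(\iota M, J^\bullet)$, whence $\Ext_{\Mr}^\bullet(M,N) \cong \Ext_{\Pr}^\bullet(M,N)$ upon passing to cohomology. Finally I would check that this agrees with the natural map \eqref{eq:ExtMrmap}: the counit $\Phi J^\bullet \to J^\bullet$ is a morphism of $\Pr$-resolutions of $N$ lifting the identity, so applying $\Hom_{\Pr}(\iota M, -)$ and the adjunction recovers precisely the comparison map induced by $\iota$ on derived functors. The main obstacle is not any single computation but assembling the torsion-theoretic framework carefully — confirming that ${}_{\Mr}\fsmod$ is a Grothendieck category whose injectives are exactly the $\Phi J$ with $J$ injective, and that the vanishing of higher local cohomology goes through in the $\Z_2$-graded setting (which it does, $u_{r-1}$ being even and central). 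Finite-dimensionality of $M$ and $N$ is in fact not essential to this argument, but it permits, should one prefer, an alternative reduction to modules over the finite-dimensional quotient $k\Mrs = \Pr/\subgrp{u_{r-1}^{p^s}}$ followed by a proof via explicit resolutions.
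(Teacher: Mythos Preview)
Your argument is correct and takes a genuinely different route from the paper's proof. The paper uses the finite-dimensionality of $M$ and $N$ to factor the action through some $\Mrs$, then sets up a morphism between two Lyndon--Hochschild--Serre spectral sequences: one for the supergroup extension $\Nrs \hookrightarrow \Mr \twoheadrightarrow \Mrs$ and one for the Hopf superalgebra extension $k[u_{r-1}^{p^s}] \hookrightarrow \Pr \twoheadrightarrow k\Mrs$, checking by hand that the map on $E_2$-pages is an isomorphism (both normal subobjects have cohomology an exterior algebra on a single degree-one class). Your proof instead recognizes ${}_{\Mr}\fsmod$ as the hereditary torsion class for the even central non-zero-divisor $u_{r-1} \in \Pr$ and invokes the vanishing of local cohomology $H^i_{(u_{r-1})}$ above degree one, together with $N[u_{r-1}^{-1}] = 0$ for torsion $N$, to see that applying the torsion functor to a $\Pr$-injective resolution of $N$ already yields an $\Mr$-injective resolution.

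Your approach is cleaner and, as you observe, does not actually require $M$ or $N$ to be finite-dimensional; it also pinpoints the structural reason behind the isomorphism rather than verifying it through a spectral-sequence comparison. The paper's approach, on the other hand, stays closer to the spectral-sequence machinery developed in Section \ref{subsection:LHS} and reuses the explicit identification of $\Hbul(\Nrs,k)$ obtained there. One small point worth making explicit in your write-up: the identification of $R^i\Phi$ with $H^i_{(u_{r-1})}$ (via $\iota \circ R^i\Phi = R^i\Gamma_{(u_{r-1})}$, using exactness and full faithfulness of $\iota$) and the \v{C}ech description of the latter rely on $\Pr$ being commutative Noetherian as an ordinary ring, which it is, the superalgebra structure being merely a $\Z_2$-grading.
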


\begin{proof}
Set $V = N \otimes M^\#$. Then by the usual adjoint associativity isomorphisms (cf.\ \cite[I.4.4]{Jantzen:2003} and \cite[Lemma 2.3.4]{Drupieski:2016a}), it suffices to show that the map $\Hbul(\Mr,V) \rightarrow \Hbul(\Pr,V)$ is an isomorphism. Let $\Delta_V: V \rightarrow V \otimes k[\Mr]$ be the comodule map that defines the rational $\Mr$-supermodule structure on $V$. Since $V$ is finite-dimensional, there exists an integer $s \geq 1$ such that the image of $\Delta_V$ is contained in $V \otimes k[\Mrs]$, and hence such that the action of $\Mr$ on $V$ factors through the quotient $\Mr \twoheadrightarrow \Mr/\Nrs = \Mrs$. Set $A = \Pr$, and let $B = k[x]$ be the Hopf subalgebra of $A$ generated by $x:= u_{r-1}^{p^s}$. Then $B$ is a central, primitively-generated Hopf subalgebra of $A$, and the Hopf quotient $A \sslash B = A/\subgrp{x}$ is just $\Pr/\subgrp{u_{r-1}^{p^s}} = k\Mrs$. Since $\Nrs$ acts trivially on $V$, it follows that $B$ does as well, and hence that the action of $\Pr$ on $V$ factors through the quotient $A \sslash B$. Now our strategy for showing that the map $\Hbul(\Mr,V) \rightarrow \Hbul(\Pr,V)$ is an isomorphism will be to show that the inclusion of categories ${}_{\Mr}\fsmod \hookrightarrow {}_{\Pr}\fsmod$ induces a morphism of spectral sequences between (i) the LHS spectral sequence of rational cohomology groups for the group extension
	\begin{equation} \label{eq:groupextension}
	1 \rightarrow \Nrs \rightarrow \Mr \rightarrow \Mrs \rightarrow 1,
	\end{equation}
and (ii) the LHS spectral sequence for the extension of Hopf superalgebras
	\begin{equation} \label{eq:Hopfextension}
	0 \rightarrow B \rightarrow A \rightarrow A\sslash B \rightarrow 0.
	\end{equation}
We'll show that this morphism is an isomorphism on the $E_2$-page, and hence induces a morphism between the abutments $\Hbul(\Mr,V)$ and $\Hbul(\Pr,V)$.

Let $V \rightarrow I^\bullet$ be a resolution of $V$ by injective rational $\Mr$-supermodules, and let $V \rightarrow Q^\bullet$ be a resolution of $V$ by injective $\Pr$-supermodules. Then $V \rightarrow I^\bullet$ remains exact as a complex of $\Pr$-super\-modules, so there exists a $\Pr$-supermodule chain homomorphism $I^\bullet \rightarrow Q^\bullet$ that lifts the identity on $V$. For $W$ a rational $\Mr$-supermodule, considered also as a $\Pr$-supermodule, one has $W^B = W^{\Nrs}$. Then the map $I^\bullet \rightarrow Q^\bullet$ restricts to a chain map $(I^\bullet)^{\Nrs} \rightarrow (Q^\bullet)^B$. Since $\Mr/\Nrs = \Mrs$ and $A \sslash B = k\Mrs$, and since $\Mrs$ is a finite $k$-supergroup scheme, we can consider $(I^\bullet)^{\Nrs}$ and $(Q^\bullet)^B$ both either as complexes of rational $\Mrs$-supermodules or as complexes of $k\Mrs$-supermodules. So let $(I^\bullet)^{\Nrs} \rightarrow J^{\bullet,\bullet}$ and $(Q^\bullet)^B \rightarrow K^{\bullet,\bullet}$ be Cartan-Eilenberg resolutions of $(I^\bullet)^{\Nrs}$ and $(Q^\bullet)^B$, respectively, in the category of $k\Mrs$-supermodules. Then the chain map $(I^\bullet)^{\Nrs} \rightarrow (Q^\bullet)^B$ lifts to a double complex chain map $J^{\bullet,\bullet} \rightarrow K^{\bullet,\bullet}$ of $k\Mrs$-supermodules. Now the LHS spectral sequence of rational cohomology groups for the group extension \eqref{eq:groupextension},
	\[
	E_2^{i,j}(\Mr) = \opH^i(\Mrs,\opH^j(\Nrs,V)) \Rightarrow \opH^{i+j}(\Mr,V),
	\]
arises from the double complex $(J^{\bullet,\bullet})^{\Mrs}$ by first computing cohomology along columns and then along rows, while the LHS spectral sequence for the extension of Hopf superalgebras \eqref{eq:Hopfextension},
	\[
	E_2^{i,j}(\Pr) = \opH^i(A\sslash B,\opH^j(B,V)) \Rightarrow \opH^{i+j}(A,V),
	\]
arises from the double complex $(K^{\bullet,\bullet})^B$ in the same manner. The chain map $J^{\bullet,\bullet} \rightarrow K^{\bullet,\bullet}$ restricts to a double complex chain map $(J^{\bullet,\bullet})^{\Mrs} \rightarrow (K^{\bullet,\bullet})^{k\Mrs}$, and hence we get a morphism of spectral sequences $E(\Mr) \rightarrow E(\Pr)$. On the $E_2$-page, the map
	\[
	\opH^i(\Mrs,\opH^j(\Nrs,V)) \rightarrow \opH^i(k\Mrs,\opH^j(B,V))
	\]
is induced by the equivalence of categories ${}_{\Mrs}\fsmod \cong {}_{k\Mrs}\fsmod$ and by the inclusion of categories ${}_{\Nrs}\fsmod \hookrightarrow {}_{B}\fsmod$, the latter of which gives rise to a homomorphism of $\Mrs$-supermodules (equivalently, of $k\Mrs$-supermodules) $\opH^j(\Nrs,V) \rightarrow \opH^j(B,V)$. Thus to show that the morphism of spectral sequences $E(\Mr) \rightarrow E(\Pr)$ is an isomorphism from the $E_2$-page onwards, and hence show that the abutments $\Hbul(\Mr,V)$ and $\Hbul(\Pr,V)$ are isomorphic, it suffices to show that the inclusion of categories ${}_{\Nrs}\fsmod \hookrightarrow {}_{B}\fsmod$ induces an isomorphism $\Hbul(\Nrs,V) \cong \Hbul(B,V)$. Since $\Nrs$ and $B$ each act trivially on $V$, we have $\Hbul(\Nrs,V) \cong \Hbul(\Nrs,k) \otimes V$ and $\Hbul(B,V) \cong \Hbul(B,k) \otimes V$, so it suffices even to assume that $V = k$. In this case we know that $\Hbul(\Nrs,k)$ and $\Hbul(B,k) = \Hbul(k[x],k)$ are both exterior algebras generated in cohomological degree one, and one can check that the generator $[\wt{\sigma}_1] \in \opH^1(\Nrs,k)$ gets mapped to the generator in $\opH^1(k[x],k)$. Thus $\Hbul(\Nrs,k) \cong \Hbul(B,k)$.
\end{proof}

\subsection{Support varieties in height one} \label{subsection:supportheightone}

In this section we describe the support variety $\abs{G}_M$ of a finite-dimensional rational $G$-module $M$ in the case when $G$ is a height-one infinitesimal elementary supergroup scheme, i.e., when $G$ is one of $\Gaone$, $\Gam$, or $\Mones$ for some $s \geq 1$. \emph{Throughout this section assume that $k$ is an algebraically closed field.}

Let $G$ be a finite supergroup scheme. Recall that the cohomological spectrum $\abs{G}$ of $G$ is the maximal ideal spectrum of the cohomology ring $\Hbul(G,k)$, or equivalently (cf.\ \cite[Corollary 2.2.5]{Drupieski:2016a}) the maximal ideal spectrum of the subring $H(G,k)$ defined in \eqref{eq:HGk}:
	\[
	\abs{G} := \Max\left( \Hbul(G,k) \right) = \Max\left( H(G,k) \right).
	\]
Now let $M$ be a finite-dimensional rational $G$-supermodule, and set $\Lambda = \Hom_k(M,M)$. Then $\Lambda$ is a finite-dimensional unital rational $G$-algebra, and the unit map $1_\Lambda: k \rightarrow \Lambda$ is a $G$-supermodule homomorphism. Let $\rho_\Lambda: \Hbul(G,k) \rightarrow \Hbul(G,\Lambda)$ be the map in cohomology induced by $1_\Lambda$, and set $I_M = \ker(\rho_\Lambda)$. By abuse of notation, we also denote the intersection $I_M \cap H(G,k)$ by $I_M$. Then the cohomological support variety $\abs{G}_M$ is the Zariski closed subvariety of $\abs{G}$ defined by $I_M$:
	\[
	\abs{G}_M := \Max\left( \Hbul(G,k)/I_M \right) = \Max\left( H(G,k)/I_M \right) = \Max\left( H(G,k)/ \sqrt{I_M} \right).
	\]
Using the adjoint associativity isomorphism
	\[
	\Hbul(G,\Lambda) = \Ext_G^\bullet(k,\Hom_k(M,M)) \cong \Ext_G^\bullet(k,M \otimes M^\#) \cong \Ext_G^\bullet(M,M),
	\]
one can check that this definition for $\abs{G}_M$ agrees with \cite[Definition 2.3.8]{Drupieski:2016a}. Letting $1_M \in \Hom_G(M,M) \cong \opH^0(G,\Lambda)$ denote the identity map on $M$, one can check that $\rho_\Lambda(z) = 1_M \cdot z$, the image of $1_M$ under the right cup product action of $z$.

Now let $G$ be a height-one infinitesimal elementary $k$-supergroup scheme, and let $M$ be a finite-dimensional rational $G$-supermodule. By Corollary \ref{corollary:psirelementary}, the map $\psi_1: H(G,k) \rightarrow k[\NoneG]$ induces a homeomorphism of varieties $\Psi: \NoneG \rightarrow \abs{G}$. The main result of this section is to relate the cohomological support variety $\abs{G}_M$ to the support set $\NoneG_M$ defined in Definition \ref{definition:supportset}.

\begin{theorem} \label{theorem:supportheightone}
Suppose $k$ is algebraically closed. Let $G$ be a height-one infinitesimal elementary $k$-supergroup scheme, and let $M$ be a finite-dimensional rational $G$-supermodule. Then
	\[
	\Psi^{-1}(\abs{G}_M) = \NoneG_M = \set{ \phi \in \NoneG : \id_{\Mone}(\phi^* M) = \infty}
	\]
In particular, $\NoneG_M$ is closed in $\NoneG$.
\end{theorem}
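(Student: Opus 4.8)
The plan is to establish the equality $\Psi^{-1}(\abs{G}_M) = \NoneG_M$; the final assertion then follows immediately, since $\abs{G}_M$ is Zariski closed in $\abs{G}$ and $\Psi$ is a homeomorphism by Corollary~\ref{corollary:psirelementary}. Fix $\phi \in \NoneG$ and set $\Lambda = \Hom_k(M,M)$, so that $\phi^*\Lambda = \Hom_k(\phi^*M,\phi^*M)$. First I would reformulate both sides in terms of the comorphism map $\phi^* \colon H(G,k) \to H(\Mone,k) = k[y]$. For the left-hand side: since $\Psi$ is the morphism of maximal ideal spectra induced by $\psi_1$, and base-changing the universal homomorphism $u_G$ along the $k$-point $\phi$ recovers $\phi$ itself, formula~\eqref{eq:uG*r=1} gives $\phi^*(z) = \ev_\phi(\psi_1(z)) \cdot y^{n}$ in $\opH^n(\Mone,k)$ for every homogeneous $z \in H(G,k)$ of degree $n$, where $\ev_\phi \colon k[\NoneG] \to k$ is evaluation at $\phi$; hence $\phi^*(z) = 0 \iff \ev_\phi(\psi_1(z)) = 0$, and therefore $\phi \in \Psi^{-1}(\abs{G}_M) \iff \psi_1(I_M) \subseteq \ker(\ev_\phi) \iff \phi^*(I_M) = 0$ in $\Hbul(\Mone,k)$. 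For the right-hand side: $\Mone$ is unipotent, so $k$ is (up to parity) its only irreducible supermodule, and combining Lemmas~\ref{lemma:idequivalent} and~\ref{lemma:idEndV} with adjoint associativity gives $\id_{\Mone}(\phi^*M) < \infty \iff \Hbul(\Mone,\phi^*\Lambda) \cong \Ext^\bullet_{\Mone}(\phi^*M,\phi^*M)$ is finite-dimensional. By Lemma~\ref{lemma:Mrfg} this is a finitely generated graded $k[y]$-module whose module structure factors through the algebra map $k[y] = H(\Mone,k) \to \Ext^\bullet_{\Mone}(\phi^*M,\phi^*M)$, $z \mapsto z \cdot 1_{\phi^*M}$; a finitely generated graded $k[y]$-module is finite-dimensional precisely when it is killed by a power of $y$, so $\Hbul(\Mone,\phi^*\Lambda)$ is finite-dimensional if and only if the graded ideal $I_{\phi^*M} := \ker\bigl(k[y] \to \Ext^\bullet_{\Mone}(\phi^*M,\phi^*M)\bigr)$ is nonzero. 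Thus $\phi \in \NoneG_M \iff I_{\phi^*M} = (0)$.

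Granting these two reformulations, the inclusion $\NoneG_M \subseteq \Psi^{-1}(\abs{G}_M)$ is formal: naturality of $\rho_\Lambda$ with respect to $\phi$ (together with the identification $\phi^*\Lambda \cong \phi^*M \otimes (\phi^*M)^\#$) furnishes a commutative square forcing $\phi^*(I_M) \subseteq I_{\phi^*M}$, so $I_{\phi^*M} = 0$ implies $\phi^*(I_M) = 0$.

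The reverse inclusion $\Psi^{-1}(\abs{G}_M) \subseteq \NoneG_M$ --- equivalently, $\phi^*(I_M) = 0 \Rightarrow I_{\phi^*M} = 0$ --- is the crux, and the step I expect to be the main obstacle: it is the analogue in this setting of the classical (and historically difficult) inclusion of a cohomological support variety into a rank variety. I would proceed as follows. Using Proposition~\ref{proposition:rationaltoarbitrary}, identify $\Hbul(\Mone,\phi^*\Lambda)$ with $\Ext^\bullet_{\Pone}(k,\phi^*\Lambda)$ over the graded hypersurface ring $\Pone = k[u,v]/\subgrp{u^p + v^2}$, which is Gorenstein of Krull dimension one; then finiteness of $\id_{\Mone}(\phi^*M)$ is equivalent to finiteness of $\operatorname{pd}_{\Pone}(\phi^*\Lambda)$, which in turn --- being governed by the single degree-two cohomology operator attached to the defining relation of $\Pone$, namely the operator corresponding to $x_1 = y^2 \in \opH^2(\Mone,k)$ --- is detected by the algebra $\Ext^\bullet_{\Pone}(\phi^*\Lambda,k)$. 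The task is then to match this finiteness criterion against the vanishing $\phi^*(I_M) = 0$, which I would carry out by splitting along Corollary~\ref{corollary:infinitesimalelementary}. The ``degenerate'' homomorphisms $\phi$ --- those not surjective onto some factor of $G$, e.g.\ with $\mu = 0$ or $a_0 = 0$ --- pull $M$ back to a module over a proper quotient of $k\Mone$ (a copy of $k\Gaone$, $k\Gam$, or $k\Moneone$), and are handled directly via self-injectivity of these group algebras and the Lyndon--Hochschild--Serre degeneracies recorded in §\ref{subsection:LHS}; for such $\phi$ both conditions hold automatically unless $M$ is suitably free over the quotient. For the remaining $\phi$, which by Proposition~\ref{prop:identifyhom} and Lemma~\ref{lemma:automorphisms} factor as an automorphism of $G$ composed with the canonical quotient $\Mone \twoheadrightarrow G$, one combines the spectral-sequence computations of §\ref{subsection:LHS} (which reduce $\id_{\Mone}(\phi^*M)$ to homological data over the relevant finite quotient of $G$) with the injectivity of $\phi^*$ on positive-degree cohomology coming out of Corollary~\ref{corollary:psirelementary} and Lemma~\ref{lemma:inducedmapincohomology}.

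The genuinely super case $G = \Mones$ with $s \geq 2$, where the pulled-back module lives over the artinian complete intersection $k\Mones = \Pone/\subgrp{u^{p^{s}}}$ inside the honest hypersurface $\Pone$, is the hard part, and is precisely where the Avramov--Iyengar results on restricting homology to hypersurfaces (acknowledged in §\ref{SS:Acknowledgements}) enter. An alternative, more uniform route would mirror Suslin--Friedlander--Bendel: form the $\Mone \otimes_k k[\NoneG]$-module $u_G^*(\Lambda \otimes_k k[\NoneG])$, establish a relative version of Lemma~\ref{lemma:Mrfg} making its cohomology finitely generated over $k[y] \otimes_k k[\NoneG]$, and identify its support in $\mathbb{A}^1 \times \NoneG$ with $V(\psi_1(I_M))$ on the one hand and with the fibrewise support sets $\{\ast\} \times \set{\phi : I_{\phi^*M} \neq 0}$ on the other, using flat base change over a dense open subset of $\NoneG$ and extending to all of $\NoneG$ by the conical ($\G_m$-equivariant) structure of these varieties.
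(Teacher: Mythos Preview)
Your reformulations of both sides and the easy inclusion $\NoneG_M \subseteq \Psi^{-1}(\abs{G}_M)$ are correct and match the paper's framing. The $\Aut(G)$-orbit reduction you mention is also exactly what the paper does: for each $G$ it reduces to checking the equivalence at finitely many orbit representatives in $\NoneG$.

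Where your proposal diverges from the paper is in the hard inclusion. The paper does \emph{not} pass through $\Pone$ or invoke Avramov--Iyengar, nor does it use an SFB-style generic-point/base-change argument. Instead, for each orbit representative $\phi$ it argues directly: from the LHS spectral sequences it identifies the kernel of the relevant inflation map $\Hbul(G,\Lambda) \to \Hbul(\Mone,\phi^*\Lambda)$ as right multiplication by a single homogeneous element (e.g.\ $w_s$ for $\phi = \pi_{1;s}$), and then localizes $\Hbul(G,\Lambda)$ at the maximal homogeneous ideal corresponding to $\Psi(\phi)$ and applies the graded Nakayama lemma. This Nakayama step is the engine of the entire argument and is what you are missing; once you have it, the case you call ``genuinely super'' ($G=\Mones$, $s\ge 2$, $\phi=\pi_{1;s}$) becomes the \emph{easiest}, not the hardest, and no hypersurface machinery is needed.

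Conversely, your intuition about the ``degenerate'' points is off. For $G=\Mones$, $s\ge 2$, the representative $\phi_{(0,0,1)}$ (factoring through $\iota:\Gaone\hookrightarrow\Mones$) is not dispatched by self-injectivity or by the LHS sequences of \S\ref{subsection:LHS} alone: those sequences all have $\Mr$ at the top, whereas here one must control $\Hbul(\Mones,\Lambda)$ itself. The paper introduces a \emph{new} LHS sequence for the extension $\Gaone \stackrel{\iota}{\hookrightarrow} \Mones \twoheadrightarrow \M_{1;s-1}$ to show that $\Hbul(\Mones,\Lambda)$ is finite over the subalgebra $k[y]\subset H(\Mones,k)$ once $\Hbul(\Gaone,\iota^*\Lambda)$ is finite-dimensional; only then does the Nakayama argument at $\fm=\langle y\rangle$ go through. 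Your phrase ``handled directly via self-injectivity \ldots\ both conditions hold automatically'' skips this finiteness step, which is the actual content of that case.
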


We break the proof up into cases, depending on the isomorphism type of $G$.

\subsubsection{Proof of Theorem \ref{theorem:supportheightone} for $G = \Mones$ and $s \geq 2$} \label{subsubsec:proofMones}

In this case
	\[
	H(\Mones,k) \cong k[x_1,y,w_s]/\subgrp{x_1-y^2} \cong k[y,w_s].
	\]
The points of the cohomological spectrum $\abs{\Mones}$ may be identified with $k$-algebra homomorphisms $H(\Mones,k) \rightarrow k$. Given scalars $\mu,a,b \in k$ such that $\mu^2 = a$, let $\wh{\phi}_{(\mu,a,b)} : H(\Mones,k) \rightarrow k$ denote the homomorphism such that $\wh{\phi}_{(\mu,a,b)}(y) = \mu$, $\wh{\phi}_{(\mu,a,b)}(x_1) = a$, and $\wh{\phi}_{(\mu,a,b)}(w_s) = b$. Similarly, given scalars $\mu,a,b \in k$ such that $\mu^2 = a^p$, let $\phi_{(\mu,a,b)}: \Mone \rightarrow \Mones$ denote the homomorphism labeled by $(\mu,a,b)$ as in Proposition \ref{prop:identifyhom}\eqref{item:HomMrs}. Then Corollary \ref{corollary:psirelementary} implies that $\Psi(\phi_{(\mu,a,b)}) = \wh{\phi}_{(\mu,a^p,b^p)}$.

Observe that $\Aut(\Mones) = \bfAut(\Mones)(k)$ acts on $\None(\Mones)$ via composition, $\nu_*: \phi \mapsto \nu \circ \phi$, and acts on $\abs{\Mones}$ by the $\nu_*(\wh{\phi}) = \wh{\phi} \circ \nu^*: H(\Mones,k) \rightarrow H(\Mones,k) \rightarrow k$. Moreover, it follows from the last assertion of Proposition \ref{prop:psirgradedmap} that $\Psi$ is $\Aut(\Mones)$-equivariant. Given scalars $\lambda,c,d \in k$ such that $\lambda^2 = c^p \neq 0$, let $\nu = \nu_{(\lambda,c,d)}: \Mones \rightarrow \Mones$ be the corresponding automorphism as in Lemma \ref{lemma:automorphisms}. Then an elementary computation (noting that comorphisms compose in the reverse order from homomorphisms) and Lemma \ref{lemma:inducedmapincohomology}\eqref{item:rho*Mrs} imply that
	\[
	\nu_*(\phi_{(\mu,a,b)}) = \phi_{(\mu\lambda,ac,bc^{p^{s-1}}+ad)} \quad \text{and} \quad \nu_*(\wh{\phi}_{(\mu,a,b)}) = \wh{\phi}_{(\mu\lambda,ac^p,bc^{p^s}+ad^p)}.
	\]
Furthermore, the action of the automorphism $\nu$ on $\abs{G}$ maps $\abs{G}_{\nu^*M}$ isomorphically onto $\abs{G}_{M}$. Now, there are precisely two orbits of $\Aut(\Mones)$ in $\None(\Mones)$: the orbit of the canonical quotient map $\pi_{1;s} = \phi_{(1,1,0)}: \Mone \rightarrow \Mones$ and the orbit of $\phi_{(0,0,1)}: \Mone \rightarrow \Mones$. Then to finish the proof of the theorem in the case $G = \Mones$ with $s \geq 2$, it suffices to show that
	\begin{enumerate}
	\item $\wh{\phi}_{(1,1,0)} \in \abs{\Mones}_M$ if and only if $\phi_{(1,1,0)} \in \None(\Mones)_M$, and
	\item $\wh{\phi}_{(0,0,1)} \in \abs{\Mones}_M$ if and only if $\phi_{(0,0,1)} \in \None(\Mones)_M$.
	\end{enumerate}

First we prove statement (1). Set $\Lambda = \Hom_k(M,M)$, and set $\pi = \pi_{1;s} = \phi_{(1,1,0)}$. Then there exists a commutative diagram
\[
\xymatrix@C+.5em{
H(\Mones,k) \ar@{->}[r]^{\rho_{\Lambda}} \ar@{->}[d]^{\pi^*} & \Hbul(\Mones,\Lambda) \ar@{->}[d]^{\pi^*} \\
H(\Mone,k) \ar@{->}[r]^{\rho_{\pi^* \Lambda}} & \Hbul(\Mone,\pi^* \Lambda)
}
\]
in which the horizontal arrows are induced by the unit map $k \rightarrow \Lambda$, and the vertical arrows are the inflation maps induced by $\pi$. First suppose $\pi \in \None(\Mones)_M$, so that $\opH^i(\Mone,\pi^*\Lambda) \neq 0$ for infinitely many $i \geq 0$. Since $\Hbul(\Mone,\pi^* \Lambda)$ is finitely generated over $H(\Mone,k) \cong k[y]$ by Lemma \ref{lemma:Mrfg}, this implies that $y^n \notin \ker(\rho_{\pi^*\Lambda})$ for all $n \geq 1$. Then the commutativity of the diagram implies that $\ker(\rho_\Lambda)$ is contained in the kernel of the inflation map $\pi^*: H(\Mones,k) \rightarrow H(\Mone,k)$, i.e., $I_M \subseteq \subgrp{w_s}$, which in turn implies that $\wh{\pi} = \wh{\phi}_{(1,1,0)} \in \abs{G}_M$.

For the converse of (1), suppose $\pi \notin \None(\Mones)_M$. We want to show that $\wh{\pi} \notin \abs{G}_M$, or equivalently that $\Hbul(\Mones,\Lambda)_{\fm} = 0$. Here $\Hbul(\Mones,\Lambda)_{\fm}$ denotes the localization of $\Hbul(\Mones,\Lambda)$ at the maximal homogeneous ideal $\fm = \subgrp{w_s}$ that defines the line in $\abs{\Mones}$ through $\wh{\pi}$. Since $\pi \notin \None(\Mones)_M$, there exists an integer $N \geq 1$ such that $\opH^i(\Mone,\pi^*\Lambda) = 0$ for $i \geq N$. Then $(\pi^* \circ \rho_\Lambda)(y^N) = 0$. By the discussion in Section \ref{subsubsec:LHSpirs}, this implies that $\rho_\Lambda(y^N) = z.w_s$ for some $z \in \opH^{N-2}(\Mones,\Lambda)$. In other words, $1_M . y^N = z.w_s$ in $\Hbul(\Mones,\Lambda)$. Now consider the homogeneous localizations of $\Hbul(\Mones,k)$ and $\Hbul(\Mones,\Lambda)$ at the maximal homogeneous ideal $\fm = \subgrp{w_s}$. Since $y$ is a unit in $\Hbul(\Mones,k)_{\fm}$, the identity $1_M \cdot y^N = z.w_s$ implies that $\Hbul(\Mones,\Lambda)_{\fm} = (\Hbul(\Mones,\Lambda)_{\fm}).\fm$. Then the graded analogue of Nakayama's Lemma \cite[Ex.\ 1.5.24]{Bruns:1993} implies that $\Hbul(\Mones,\Lambda)_{\fm} = 0$, and hence that $\wh{\pi} \notin \abs{G}_M$.

Now we prove (2). Set $\phi = \phi_{(0,0,1)}$, and write $\phi = \rho \circ \pi: \Mone \twoheadrightarrow \Mones \rightarrow \Mones$, where $\rho = \rho_{(0,0,1)}$ as in Remark \ref{rem:Hom}\eqref{item:iota}. First suppose $\phi \in \None(\Mones)_M$. Then $\opH^i(\Mone,\phi^* \Lambda) = \opH^i(\Mone,\pi^*(\rho^*\Lambda))$ is nonzero for infinitely many $i \geq 0$, which means that $\pi \in \None(\Mones)_{\rho^*M}$, and hence $\wh{\pi} \in \abs{\Mones}_{\rho^*M}$ by (1). The homomorphism $\rho: \Mones \rightarrow \Mones$ induces a morphism of varieties $\rho_*: \abs{\Mones}_{\rho^*M} \rightarrow \abs{\Mones}_M$, and one can check using Lemma \ref{lemma:inducedmapincohomology}\eqref{item:rho*Mrs} that $\rho_*(\wh{\pi}) = \wh{\phi}$. Now for the converse of (2), suppose that $\phi \notin \None(\Mones)_M$, and let $N \geq 1$ such that $\opH^i(\Mone,\phi^*\Lambda) = 0$ for all $i \geq N$. Let $\fm = \subgrp{y}$ be the maximal homogeneous ideal in $H(\Mones,k)$ that defines the line in $\abs{\Mones}$ through $\wh{\phi}$. To prove that $\wh{\phi} \notin \abs{\Mones}_M$, we will prove the equivalent condition that $\Hbul(\Mones,\Lambda)_{\fm} = 0$.

The homomorphism $\phi$ admits the factorization $\phi = \iota \circ q: \Mone \twoheadrightarrow \Gaone \hookrightarrow \Mones$, where $\iota$ is the homomorphism discussed in Remark \ref{rem:Hom}\eqref{item:iota}. By the discussion of Section \ref{subsubsec:LHSq}, the map in cohomology induced by $q$, $\Hbul(\Gaone,\iota^*\Lambda) \rightarrow \Hbul(\Mone,\phi^*\Lambda)$, is an injection. Then $\opH^j(\Gaone,\iota^*\Lambda) = 0$ for $j \geq N$. In particular, $\Hbul(\Gaone,\iota^*\Lambda)$ is finite-dimensional. Now consider the LHS spectral sequence for the extension $\Gaone \stackrel{\iota}{\rightarrow} \Mones \stackrel{\pi}{\twoheadrightarrow} \M_{1;s-1}$, where now $\pi: \Mones \twoheadrightarrow \M_{1;s-1}$ denotes the quotient homomorphism whose comorphism is the subalgebra inclusion $k[\M_{1;s-1}] \hookrightarrow k[\Mones]$:
	\begin{equation} \label{eq:LHSiota}
	E_2^{i,j} = \opH^i(\M_{1;s-1},\opH^j(\Gaone,\iota^*\Lambda)) \Rightarrow \opH^{i+j}(\Mones,\Lambda).
	\end{equation}
Since $\Hbul(\Gaone,\iota^*\Lambda)$ is finite-dimensional, one gets by \cite[Theorem 1.1]{Drupieski:2016} that the $E_2$-page is finite as a module over the cohomology ring $\Hbul(\Mones,k)$, and hence gets by \cite[Lemma 1.6]{Friedlander:1997} and \cite[III.2.9 Corollary 1]{Bourbaki:1998} that the inflation map $\Hbul(\M_{1;s-1},k) \rightarrow \Hbul(\Mones,k)$ makes $\Hbul(\Mones,\Lambda)$ into a finite module over $\Hbul(\M_{1;s-1},k)$. By \cite[Lemma 3.2.4(3)]{Drupieski:2017a}, the inflation map $\Hbul(\M_{1;s-1},k) \rightarrow \Hbul(\Mones,k)$ satisfies $y \mapsto y$ and $w_{s-1} \mapsto 0$, so $\Hbul(\Mones,\Lambda)$ is finite as a module over the subalgebra $k[y]$ of $H(\Mones,k)$. Fix a homogeneous generating set for $\Hbul(\Mones,\Lambda)$ as a $k[y]$-module, and let $m$ be the maximum of the cohomological degrees of the generators. Then $1_M. (w_s)^m \in \opH^{2m}(\Mones,\Lambda)$. Writing $1_M.(w_s)^m$ as a $k[y]$-linear combination of the homogeneous generators, it follows by degree consideration that each term in the sum is a multiple of $y$, and hence $1_M.(w_s)^m = z.y$ for some $z \in \opH^{2m-1}(\Mones,\Lambda)$. Now localizing at the maximal homogeneous ideal $\fm = \subgrp{y}$, and using the fact that $w_s$ is a unit in $\Hbul(\Mones,k)_{\fm}$, it follows as in the proof of (1) that $\Hbul(\Mones,\Lambda)_{\fm} = 0$.

\subsubsection{Proof of Theorem \ref{theorem:supportheightone} for $G = \Moneone = \Gaone \times \Gam$}

The proof in this case is similar to that for $G = \Mones$ when $s \geq 2$, except that now there are three orbits of $\Aut(\Moneone)$ in $\None(\Moneone)$ to consider, namely, the orbits of the canonical quotient map $\phi_{(1,1)}: \Mone \twoheadrightarrow \Moneone$, and the orbits of the homomorphisms $\phi_{(0,1)}$ and $\phi_{(1,0)}$ discussed in Remark \ref{rem:Hom}\eqref{item:1001factorizations}. Then it suffices to show that
	\begin{enumerate}
	\item $\wh{\phi}_{(1,1)} \in \abs{\Moneone}_M$ if and only if $\phi_{(1,1)} \in \None(\Moneone)_M$,
	\item $\wh{\phi}_{(0,1)} \in \abs{\Moneone}_M$ if and only if $\phi_{(0,1)} \in \None(\Moneone)_M$, and
	\item $\wh{\phi}_{(1,0)} \in \abs{\Moneone}_M$ if and only if $\phi_{(1,0)} \in \None(\Moneone)_M$.
	\end{enumerate}
The proofs of (1) and (2) are entirely similar to the arguments in Section \ref{subsubsec:proofMones} above, with \eqref{eq:LHSiota} replaced by the LHS spectral sequence for the extension $\Gaone \hookrightarrow \Moneone \twoheadrightarrow \Gam$. The proof of (3) is also similar to that of (2) in Section \ref{subsubsec:proofMones}, except that one now applies the observation at the end of Section \ref{subsubsec:LHSq-} when considering the map in cohomology induced by the quotient $\Mone \twoheadrightarrow \Gam$, the spectral sequence \eqref{eq:LHSiota} is replaced by the LHS spectral sequence for the extension $\Gam \hookrightarrow \Moneone \twoheadrightarrow \Gaone$, and the maximal homogeneous ideal $\fm = \subgrp{y} \subset H(\Mones,k)$ is replaced by the maximal homogeneous ideal $\subgrp{x_1} \subset H(\Moneone,k)$.

\subsubsection{Proof of Theorem \ref{theorem:supportheightone} for $G = \Gaone$}

In this case $H(\Gaone,k) \cong k[x_1]$, and hence $\abs{\Gaone}$ and $\None(\Gaone)$ are both one-dimensional affine spaces, and the following are equivalent:
	\begin{itemize}
	\item $\abs{\Gaone}_M = \abs{\Gaone}$
	\item $\opH^i(\Gaone,\Lambda) = \Ext_{\Gaone}^i(M,M) \neq 0$ for infinitely many $i \geq 0$.
	\end{itemize}
Since $\Hbul(\Gaone,\Lambda)$ embeds into $\Hbul(\Mone,\Lambda)$ by the observation at the end of Section \ref{subsubsec:LHSq}, it follows that if $\None(\Gaone)_M = \set{0}$, then $\opH^i(\Gaone,\Lambda) = 0$ for $i \gg 0$, and hence $\abs{\Gaone}_M = \set{0}$. Conversely, suppose $\None(\Gaone)_M = \None(\Gaone)$, so that $\opH^i(\Mone,q^*\Lambda) \neq 0$ for infinitely many $i \geq 0$. Consider the commutative diagram
\[
\xymatrix@C+.5em{
H(\Gaone,k) \ar@{->}[r]^{\rho_{\Lambda}} \ar@{->}[d]^{q^*} & \Hbul(\Gaone,\Lambda) \ar@{->}[d]^{q^*} \\
H(\Mone,k) \ar@{->}[r]^{\rho_{q^* \Lambda}} & \Hbul(\Mone,q^* \Lambda).
}
\]
As in the proof of (1) in Section \ref{subsubsec:proofMones} above, it follows that $\rho_{q^*\Lambda}(y^n) \neq 0$ for $n \geq 1$. Since $q^*(x_1) = x_1$ by \cite[Lemma 3.2.4]{Drupieski:2017a}, and since $x_1 = y^2$ in $H(\Mone,k)$, this implies that $\ker(\rho_\Lambda) = \set{0}$, and hence implies that $\abs{\Gaone}_M = \abs{\Gaone}$.

\subsubsection{Proof of Theorem \ref{theorem:supportheightone} for $G = \Gam$} The reasoning is entirely similar to that for $G = \Gaone$.

\subsection{Applications of the BIKP detection theorem} \label{subsection:BIKPapplications}

The following theorem of Benson, Iyengar, Krause, and Pevtsova (BIKP) was announced by Julia Pevtsova at the Mathematical Congress of the Americas in Montreal in July 2017.

\begin{theorem}[BIKP Detection Theorem] \label{theorem:BIKP}
Let $G$ be a finite unipotent $k$-super\-group scheme. Given $z \in \opH^n(G,k)$ and a field extension $K/k$, set $z_K = z \otimes_k 1 \in \opH^n(G,k) \otimes_k K = \opH^n(G \otimes_k K,K)$. Given a rational $G$-supermodule $M$, set $M_K = M \otimes_k K$. Then the following hold:
	\begin{enumerate}
	\item \label{item:detectnilpotents} A cohomology class $z \in \Hbul(G,k)$ is nilpotent if and only if for every field extension $K$ of $k$ and every elementary subsupergroup scheme $E$ of $G \otimes_k K$, the restriction of $z_K \in \Hbul(G \otimes_k K,K)$ to $\Hbul(E,K)$ is nilpotent.
	\item \label{item:detectprojectivity} Let $M$ be a rational $G$-supermodule. Then $M$ is projective if and only if for every field extension $K$ of $k$ and every elementary subsupergroup scheme $E$ of $G \otimes_k K$, the restriction of $M_K$ to $E$ is projective.
	\end{enumerate}
\end{theorem}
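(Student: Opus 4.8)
The plan is to follow the Quillen--Chouinard paradigm, as refined for infinitesimal group schemes by Suslin--Friedlander--Bendel, but carried out in the super setting. I would first reduce the projectivity statement to the nilpotence statement together with the results of this paper. Since $G$ is finite, a finite-dimensional rational $G$-supermodule $M$ is projective if and only if the cohomological support variety $\abs{G}_M$ is a point, equivalently if and only if the $\Hbul(G,k)$-module $\Hbul(G,\Hom_k(M,M))$ is finite-dimensional; and whether this holds is governed by whether the identity class $1_M\in\opH^0(G,\Hom_k(M,M))$ is annihilated modulo nilpotents under the cup-product action of $\Hbul(G,k)$. Applying the nilpotence statement with coefficients in the rational $G$-algebra $\Hom_k(M,M)$ shows that this annihilation can be tested after restriction along the elementary subsupergroup schemes $E\leq G\otimes_k K$; combining this with the identification of support varieties over elementary supergroups --- Theorem \ref{theorem:supportheightone} in height one, and the expected higher-height analogue discussed in the introduction --- then yields the projectivity statement.

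For the nilpotence statement I would show that the product of restriction maps
	\[
	\Hbul(G,k)\longrightarrow \prod_{K/k}\ \prod_{E\leq G\otimes_k K}\Hbul(E,K)
	\]
(taken over all field extensions $K/k$ and all elementary subsupergroup schemes $E$ of $G\otimes_k K$) has nilpotent kernel. The connected--\'etale sequence together with Lyndon--Hochschild--Serre arguments like those of Section \ref{subsection:LHS} reduce this to the case of the infinitesimal unipotent part of $G$ (the \'etale-unipotent direction being governed by Quillen's stratification for finite groups \cite{Quillen:1971}), so one may assume $G$ is infinitesimal of height $\leq r$ and fix a closed embedding $G\hookrightarrow\GLmn$, hence $G\leq\GLmnr$. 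By the computation recalled in the introduction --- that, up to a finite morphism, $\abs{\GLmnr}$ is the union of the images of the varieties $\bfHom(\Mrfeta,\GLmnr)(k)$ as the $p$-polynomial $f$ and the scalar $\eta$ vary \cite{Drupieski:2017a} --- together with the passage from $\GLmnr$ to a general subgroup carried out as in \cite{Suslin:1997a}, one should be able to cover $\abs{G}$ by the images of homomorphisms $\rho:\Mrfeta\otimes_k K\to G\otimes_k K$. Each such $\rho$ factors through an elementary quotient of $\Mrfeta$ (Theorem \ref{theorem:elementaryclassification} and Lemma \ref{lemma:Mrquotient}) and hence through an elementary subsupergroup of $G\otimes_k K$, so a class restricting to zero on every such elementary subsupergroup must vanish on every point of $\abs{G}$, i.e.\ must be nilpotent.

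The step I expect to be the main obstacle --- and the reason the result is credited to BIKP rather than proved here --- is making precise the assertion, used twice above, that the cohomological spectrum is ``visible'' through homomorphisms out of the multiparameter supergroups and hence through elementary subsupergroups. Even the $\GLmnr$ case of this rests on detecting non-nilpotence on $\pi$-points: one must produce, for each non-nilpotent class, a field extension $K$ and a flat homomorphism $k\Mone\otimes_k K\to kG\otimes_k K$ on which the class survives, and must do so uniformly over the generic point of the relevant variety of homomorphisms. Carrying this out in the presence of the odd generator $v$ satisfying $u_{r-1}^p+v^2=0$, and of the semisimple subalgebras that occur in the non-unipotent multiparameter supergroups, is the technical heart of the BIKP work; one anticipates that it proceeds by constructing super-analogues of the Suslin--Friedlander--Bendel characteristic extension classes --- as begun in \cite{Drupieski:2017a} for $\GLmn$ --- and tracking them along the universal homomorphism, but I will not attempt to reconstruct that argument here.
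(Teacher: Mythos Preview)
The paper does not prove this theorem. It is stated as an announced result of Benson, Iyengar, Krause, and Pevtsova, with the explicit caveat that ``at the time of writing this article the BIKP detection theorem has only been announced as a preliminary result, and has not yet appeared in preprint form.'' The paper then \emph{uses} the theorem as a black box to justify Conjectures~\ref{conjecture:spectrum} and~\ref{conjecture:supportheightone}. So there is no proof in the paper against which to compare your proposal.

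As for the proposal itself: beyond the gap you already flag in your final paragraph, there is a circularity problem. To deduce projectivity from nilpotence you invoke Theorem~\ref{theorem:supportheightone} together with ``the expected higher-height analogue discussed in the introduction''; but that higher-height analogue is precisely Conjecture~\ref{conjecture:supportheightone}, whose justification in Section~\ref{subsection:BIKPapplications} assumes the BIKP theorem (indeed the strengthened form (BIKP~$1'$)). Likewise, your nilpotence argument wants to pass from the description of $\abs{\GLmnr}$ in \cite{Drupieski:2017a} down to an arbitrary closed unipotent subgroup $G\leq\GLmnr$ ``as in \cite{Suslin:1997a}''; but this descent is exactly the step the paper cannot carry out without BIKP --- it is why Conjecture~\ref{conjecture:spectrum} is stated as a conjecture rather than a theorem. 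In short, the ingredients you borrow from this paper are downstream of the theorem you are trying to prove, so the sketch does not stand on its own.
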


In the remainder of the paper we will discuss a few of the ways we expect the BIKP detection theorem may be applied (\`{a} la Suslin, Friedlander, and Bendel \cite{Suslin:1997,Suslin:1997a}) to generalize our results in Corollary \ref{corollary:psirelementary} and Theorem \ref{theorem:supportheightone}. Since at the time of writing this article the BIKP detection theorem has only been announced as a preliminary result, we will state these expected applications only as conjectures.

\begin{conjecture} \label{conjecture:spectrum}
Suppose $k$ is algebraically closed, and let $G$ be an infinitesimal unipotent $k$-supergroup scheme of height $ \leq r$. Then the kernel of the algebra homomorphism
	\[
	\psi_r: H(G,k) \rightarrow k[\NrG]
	\]
is nilpotent, and its image contains the $p^r$-th power of each element of $k[\NrG]$. Consequently, the associated morphism of varieties
	\[
	\Psi = \Psi_r: \NrG \rightarrow \abs{G} = \Max(H(G,k))
	\]
is a homeomorphism.
\end{conjecture}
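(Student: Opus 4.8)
The plan is to follow the strategy of Suslin, Friedlander, and Bendel \cite{Suslin:1997a}, with the BIKP detection theorem (Theorem \ref{theorem:BIKP}) standing in for their cohomological detection theorem, and with Corollary \ref{corollary:psirelementary} furnishing the elementary base case. Granting the two displayed assertions of the conjecture, the homeomorphism statement follows formally: the injectivity of $\psi_r$ modulo nilpotents lets it factor as $H(G,k) \twoheadrightarrow H(G,k)/\sqrt{0} \hookrightarrow k[\NrG]$, so $\Psi$ is dominant; the containment of the $p^r$-th powers makes $k[\NrG]$ integral — hence, being a finitely generated $k$-algebra, module-finite — over $\psi_r(H(G,k))$, so $\Psi$ is a finite, in particular closed, morphism; and since $k$ is algebraically closed and the $p^r$-power map is injective on $k$, the same containment forces $\Psi$ to be injective on points. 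A continuous closed bijection is a homeomorphism.

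\emph{Nilpotence of $\ker(\psi_r)$.} Let $z \in H(G,k)$ with $\psi_r(z) = 0$, and regard $z$ as an element of $\Hbul(G,k)$. By Theorem \ref{theorem:BIKP}\eqref{item:detectnilpotents} it suffices to prove, for every field extension $K/k$ and every elementary subsupergroup scheme $E \subseteq G \otimes_k K$, that the image of $z_K$ in $\Hbul(E,K)$ is nilpotent; and since nilpotence of a cohomology class is insensitive to extension of the coefficient field, we may assume $K$ algebraically closed. A subsupergroup scheme of the infinitesimal unipotent supergroup $G \otimes_k K$ is again infinitesimal and unipotent, so $E$ is an infinitesimal (unipotent) elementary $K$-supergroup scheme of height $\le r$, hence a quotient of $\Mr \otimes_k K$ (Lemma \ref{lemma:Mrquotient}), and Corollary \ref{corollary:psirelementary} applies to it. The construction of $\psi_r$ is compatible with extension of scalars (Remark \ref{remark:kNrG}\eqref{item:basechange}) and natural in the supergroup scheme (Proposition \ref{prop:psirgradedmap}), so the commutative square attached to the inclusion $E \hookrightarrow G \otimes_k K$ shows that $\psi_r^E$ sends the image of $z_K$ in $H(E,K)$ to $0$; by the injectivity modulo nilpotents in Corollary \ref{corollary:psirelementary}, that image is nilpotent, as needed. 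The unipotence hypothesis on $G$ is used precisely here: it licenses Theorem \ref{theorem:BIKP}, and it guarantees that the elementary subsupergroup schemes that must be tested are unipotent, so that $\Mr$ — and not some $\Mrfeta$ with $\eta \neq 0$ or with $f$ a non-monomial — is the correct test object.

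\emph{Containment of $p^r$-th powers.} Since $G$ is infinitesimal it is algebraic, so we may fix a closed embedding $G \hookrightarrow \GLmn$; as $G$ has height $\le r$, this embedding factors through the Frobenius kernel $\GLmnr$, and likewise every homomorphism into $\GLmn$ from $\Mr$ (which necessarily factors through a finite quotient $\M_{r;s}$ of height $\le r$) factors through $\GLmnr$. Hence $\Nr(\GLmnr) = \Nr(\GLmn)$, and by Lemma \ref{lem:HomMrGvariety} the variety $\NrG$ is closed in $\Nr(\GLmnr)$, so the restriction $k[\Nr(\GLmnr)] \twoheadrightarrow k[\NrG]$ is surjective. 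The key input is then the case of the general linear supergroup: using the characteristic-class computations of \cite{Drupieski:2017a} — together with the stabilization $\Nr(\GLmn) = \bfHom(\M_{r;N},\GLmn)(k)$ of Lemma \ref{lem:HomMrGvariety}, which isolates the $\Mr$-part of the multiparameter description of $\abs{\GLmnr}$ given there — one obtains that the image of $\psi_r : H(\GLmnr,k) \to k[\Nr(\GLmnr)]$ contains the $p^r$-th powers of the matrix-coefficient functions, hence of every element of $k[\Nr(\GLmnr)]$. Given $f \in k[\NrG]$, lift it to some $g \in k[\Nr(\GLmnr)]$, write $g^{p^r} = \psi_r(\wt{z})$ with $\wt{z} \in H(\GLmnr,k)$, and use the naturality of $\psi_r$ along $G \hookrightarrow \GLmnr$ to conclude $\psi_r^G(\wt{z}|_G) = f^{p^r}$ in $k[\NrG]$.

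I anticipate the principal difficulty to be this last step — producing the $\GLmn$ statement in exactly the shape required. The results of \cite{Drupieski:2017a} are organized around the union of the homomorphism varieties $\bfHom(\Mrfeta,\GLmn)$ over all multiparameter supergroups, and some care is needed to verify that, after restricting to the (closed) $\Mr$-stratum of that union and passing to reduced coordinate rings, the map that emerges really is the $\psi_r$ of the present paper and really does hit $p^r$-th powers on the nose — a weaker ``$p^{r'}$-th powers for some $r' \ge r$'' would still yield the homeomorphism but not the verbatim statement. The tidiest route seems to be to observe that the universal homomorphism $u_{\GLmn}$ of the present paper, precomposed with the canonical quotient $\Mr \twoheadrightarrow \M_{r;N}$, coincides with the universal homomorphism out of $\M_{r;N}$ used in \cite{Drupieski:2017a}, and then to transfer the relevant identities term by term; alternatively, for unipotent targets one could re-derive the needed $\GLmn$ computation directly, since there only the characteristic classes restricting nontrivially along the $\M_{r;s}$ come into play.
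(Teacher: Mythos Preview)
Your overall architecture matches the paper's: surjectivity onto $p^r$-th powers via the closed embedding $G \hookrightarrow \GLmnr$ and naturality (this is exactly Lemma \ref{lemma:psirontopr}, which cites \cite[Theorem 6.2.3]{Drupieski:2017a} for the $\GLmnr$ input and uses the diagram \eqref{eq:psirdiagram} to pass from $\psi_{r;N}$ to $\psi_r$, resolving the worry you flag in your last paragraph), and nilpotence of the kernel via the BIKP detection theorem plus naturality plus the elementary case.

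There is, however, a genuine gap in your nilpotence argument. An elementary sub\-supergroup scheme $E \subseteq G \otimes_k K$ need only have height $r' \leq r$, not height exactly $r$; you acknowledge this, but then invoke Corollary \ref{corollary:psirelementary}, which is stated and proved only for height-$r$ elementary supergroups. (Your appeal to Lemma \ref{lemma:Mrquotient} does not help: that lemma asserts that an infinitesimal elementary supergroup is a quotient of $\M_{r''}$ for \emph{some} $r''$, not of the fixed $\Mr$.) What is missing is the statement that $\psi_r: H(E,K) \to K[\Nr(E)]$ is injective modulo nilpotents even when $E$ has height $r' < r$. The paper supplies this as Corollary \ref{corollary:psirheightleqr}, which in turn rests on the new Lemma \ref{lemma:Homr+ell}: composition with the super Frobenius $\bsF^{r-r'}: \Mr \to \M_{r'}$ induces a bijection $\bfHom(\M_{r'},E)(A) \simrightarrow \bfHom(\Mr,E)(A)$ for reduced purely even $A$, hence an identification $k[\Nr(E)] = k[\calN_{r'}(E)]$ under which $\psi_r$ for $E$ factors through $\psi_{r'}$ for $E$. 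Only then does Corollary \ref{corollary:psirelementary} (applied with $r'$ in place of $r$) give the needed injectivity modulo nilpotents. The proof of Lemma \ref{lemma:Homr+ell} is a reprise of the classification in Proposition \ref{prop:identifyhom}, but with nontrivial bookkeeping: one must track the shifted $\Z$-grading on $k[\M_{r+\ell}]$ and check that the primitive elements arising in the analysis all lie in the image of $(\bsF^\ell)^*$.
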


We can already show (for arbitrary infinitesimal $G$) that $\psi_r$ is surjective onto $p^r$-th powers.

\begin{lemma} \label{lemma:psirontopr}
Suppose $k$ is algebraically closed, and let $G$ be an infinitesimal $k$-supergroup scheme of height $ \leq r$. Then the image of the homomorphism
	\[
	\psi_r: H(G,k) \rightarrow k[\NrG]
	\]
contains the $p^r$-th power of each element of $k[\NrG]$. 
\end{lemma}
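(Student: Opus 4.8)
The plan is to reduce, by means of a closed embedding and the naturality of $\psi_r$, to the case $G = \GLmnr$, and then to read the statement off from the characteristic class computations of \cite{Drupieski:2017a}. Since $G$ is infinitesimal it is finite, hence algebraic, so it admits a closed embedding into some $\GLmn$ (e.g.\ the one afforded by the regular representation). Because $G$ has height $\leq r$, the $r$-th Frobenius morphism $F^r_G$ is trivial, so by the naturality of Frobenius this embedding factors through $\GLmnr = \ker\bigl(F^r \colon \GLmn \to \GLmn^{(r)}\bigr)$; fix such an embedding $\iota \colon G \hookrightarrow \GLmnr$. By Lemma \ref{lem:HomMrGvariety} and Lemma \ref{lemma:NrGnatural}, $\iota$ identifies $\NrG$ with a Zariski-closed subvariety of $\Nr(\GLmnr)$ (note $\Nr(\GLmnr) = \bsNr(\GLmn)(k)$, since any homomorphism $\Mr \to \GLmn$ factors through some $\Mrs$, which has height $\leq r$, hence through $\GLmnr$), so the induced comorphism $\iota^* \colon k[\Nr(\GLmnr)] \to k[\NrG]$ is a surjection of $k$-algebras.

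Next I would invoke naturality. By Proposition \ref{prop:psirgradedmap} and Lemma \ref{lemma:NrGnatural} there is a commutative square
\[
\xymatrix{
H(\GLmnr,k) \ar[r]^{\iota^*} \ar[d]_{\psi_r} & H(G,k) \ar[d]^{\psi_r} \\
k[\Nr(\GLmnr)] \ar@{->>}[r]^{\iota^*} & k[\NrG].
}
\]
Given $f \in k[\NrG]$, choose a preimage $\tilde f \in k[\Nr(\GLmnr)]$ under the bottom surjection; if $\tilde f^{\,p^r} = \psi_r(z)$ for some $z \in H(\GLmnr,k)$, then applying the ring homomorphism $\iota^*$ and using commutativity gives $f^{p^r} = \iota^*(\tilde f)^{p^r} = \iota^*(\psi_r(z)) = \psi_r(\iota^*(z))$, with $\iota^*(z) \in H(G,k)$. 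Hence it suffices to prove the lemma for $G = \GLmnr$.

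For $G = \GLmnr$ one has $\NrG = \bsNr(\GLmn)(k)$ by Proposition \ref{proposition:HomMrGLmn} and Lemma \ref{lem:HomMrGvariety}, so $k[\NrG]$ is a quotient of a polynomial ring on the matrix-entry coordinates of $(\alpha_0,\dots,\alpha_{r-1},\beta)$; in particular it is generated as a $k$-algebra by finitely many such entry functions $f_1,\dots,f_M$. Since $k$ is perfect of characteristic $p$, the set $\set{ g^{p^r} : g \in k[\NrG]}$ is exactly the $k$-subalgebra $k[f_1^{p^r},\dots,f_M^{p^r}]$ (the map $g \mapsto g^{p^r}$ is additive and multiplicative, and $k^{p^r} = k$). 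Consequently a $k$-algebra homomorphism into $k[\NrG]$ whose image contains each $f_i^{p^r}$ has image containing all $p^r$-th powers, so it is enough to produce, for each entry function $f_i$, a class $z_i \in H(\GLmnr,k)$ with $\psi_r(z_i) = f_i^{p^r}$.

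This last step is supplied by the characteristic extension classes for $\GLmn$ constructed in \cite[\S6]{Drupieski:2017a}: unwinding the definition \eqref{eq:psir} of $\psi_r$ in terms of the universal homomorphism $u_{\GLmnr}$ and the evaluation map $\ve$, the image under $\psi_r$ of the appropriate characteristic class should be precisely the $p^r$-th power of the corresponding matrix-entry function. This is a reinterpretation, via the universal homomorphism, of the restriction computations for those classes along the homomorphisms $\Mrfeta \to \GLmn$ carried out in \cite{Drupieski:2017a}, and is the content behind the ``finite morphism'' appearing in the description of $\abs{\GLmnr}$ recalled in the introduction. I expect this matching of conventions to be the main obstacle: one must keep track of Frobenius twists carefully enough to land on exactly the $p^r$-th power (rather than some higher $p$-power) of each entry function, and to see that the classes simultaneously account for all entries of $\alpha_0,\dots,\alpha_{r-1}$ and $\beta$. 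Once this is done the lemma follows, giving the ``onto $p^r$-th powers'' half of Conjecture \ref{conjecture:spectrum}, in parallel with Corollary \ref{corollary:psirelementary} in the elementary case.
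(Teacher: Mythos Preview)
Your proposal is correct and follows essentially the same route as the paper: embed $G$ into $\GLmnr$, use naturality of $\psi_r$ to reduce to that case, and then invoke the characteristic-class computations of \cite{Drupieski:2017a}. The only difference is bookkeeping: the paper works at the level of $\psi_{r;N}$ and $k[V_{r;N}(-)]$, cites \cite[Theorem 6.2.3]{Drupieski:2017a} as a black box for the $\GLmnr$ case, and then passes to $\psi_r$ via diagram \eqref{eq:psirdiagram}; your last paragraph can be replaced wholesale by that citation, so there is no ``matching of conventions'' obstacle to worry about.
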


\begin{proof}
Let $\iota: G \hookrightarrow \GLmn$ be a closed embedding, and set $N = \max(m,n)$. Then by Lemma \ref{lem:HomMrGvariety}, $\NrG = \bfHom(\M_{r;N},G)(k)$. The embedding of $G$ into $\GLmn$ defines a closed embedding of affine $k$-superschemes $\bfHom(\M_{r;N},G) \subset \bfHom(\M_{r;N},\GLmn)$, and hence a surjection of coordinate superalgebras $k[\bfHom(\M_{r;N},\GLmn)] \twoheadrightarrow k[\bfHom(\M_{r;N},G)]$. Passing to the largest purely even quotients, the embedding induces surjective $k$-algebra homomorphism $k[V_{r;N}(\GLmn)] \twoheadrightarrow k[V_{r;N}(G)]$. Next, by \cite[Lemma 4.4.1]{Drupieski:2013b}, the image of $\iota: G \hookrightarrow \GLmn$ is contained in $\GLmnr$, the $r$-th Frobenius kernel of $\GLmn$. Then by \cite[Lemma 6.2.1]{Drupieski:2017a}, there exists a commutative diagram
\[
\xymatrix{
H(\GLmnr,k) \ar@{->}[r]^{\psi_{r;N}} \ar@{->}[d]^{\iota^*} & k[V_{r;N}(\GLmn)] \ar@{->>}[d]^{\iota^*} \\
H(G,k) \ar@{->}[r]^{\psi_{r;N}} & k[V_{r;N}(G)].
}
\]
The top horizontal map is surjective onto $p^r$-th powers by \cite[Theorem 6.2.3]{Drupieski:2017a}, so the commutativity of the diagram implies that the bottom horizontal map is surjective onto $p^r$-th powers as well. Finally, the commutative diagram \eqref{eq:psirdiagram} implies that $\psi_r$ is surjective onto $p^r$-th powers.
\end{proof}

To argue that $\ker(\psi_r)$ is nilpotent, we first need a strengthening of Proposition \ref{prop:identifyhom} and of Corollary \ref{corollary:psirelementary}. Recall from \cite[Remark 3.1.3(5)]{Drupieski:2017a} that the super Frobenius morphism $\bsF^\ell: \M_{r+\ell} \rightarrow \Mr$ is the supergroup homomorphism whose comorphism $(\bsF^\ell)^*: k[\Mr] \rightarrow k[\M_{r+\ell}]$ is defined by $\theta \mapsto \theta^{p^\ell}$, $\tau \mapsto \tau$, and $\sigma_i \mapsto \sigma_i$ for $i \in \N$. Note that $\theta^{p^{r-1}} = \sigma_1$ in $k[\Mr]$, but $\theta^{p^{r+\ell-1}} = \sigma_1$ in $k[\M_{r+\ell}]$, so the assignment $(\bsF^\ell)^*(\theta) = \theta^{p^\ell}$ does make sense. We could have incorporated the first statement of the following lemma into Proposition \ref{prop:identifyhom}, but to do so would have made the proof more notationally cumbersome than it already is.

\begin{lemma} \label{lemma:Homr+ell}
Let $G$ be a height-$r$ infinitesimal elementary $k$-supergroup scheme, and let $A = \Azero$ be a reduced purely even commutative $k$-algebra. Then for all $\ell \geq 1$, composition with the super Frobenius morphism $\bsF^\ell: \M_{r+\ell} \rightarrow \Mr$ defines a bijection
	\[
	\bfHom(\Mr,G)(A) \stackrel{\sim}{\rightarrow} \bfHom(\M_{r+\ell},G)(A).
	\]
In particular, if $k$ is algebraically closed then $\NrG \cong \calN_{r+\ell}(G)$.
\end{lemma}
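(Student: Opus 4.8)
The plan is to deduce the first assertion from a case-by-case classification of Hopf superalgebra homomorphisms $A[G] \to A[\M_{r+\ell}]$, run in parallel with the proof of Proposition \ref{prop:identifyhom}. By Corollary \ref{corollary:infinitesimalelementary}, $G$ is isomorphic to one of $\Gar$, $\Gar \times \Gam = \Mrone$, $\Mrs$ with $s \geq 2$, or $\Mrseta$ with $r \geq 2$ and $0 \neq \eta \in k$. Recall that the comorphism of $\bsF^\ell$ is the injective algebra homomorphism $(\bsF^\ell)^* : A[\Mr] \hookrightarrow A[\M_{r+\ell}]$ sending $\theta \mapsto \theta^{p^\ell}$, $\tau \mapsto \tau$, $\sigma_i \mapsto \sigma_i$, whose image is the subalgebra generated by $\theta^{p^\ell}$, $\tau$, and the $\sigma_i$; injectivity is immediate from comparing monomial bases. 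Composing a homomorphism $\rho : \Mr \otimes_k A \to G \otimes_k A$ with $\bsF^\ell \otimes_k A$ corresponds to composing $\rho^*$ with $(\bsF^\ell)^*$, so the assignment $\rho \mapsto \rho \circ (\bsF^\ell \otimes_k A)$ is automatically injective. The whole content is therefore surjectivity: I must show that for every $\phi \in \bfHom(\M_{r+\ell},G)(A)$, the image of the comorphism $\phi^* : A[G] \to A[\M_{r+\ell}]$ lies inside $(\bsF^\ell)^*(A[\Mr])$. Once this is known, the unique algebra map $A[G] \to A[\Mr]$ through which $\phi^*$ factors is automatically a homomorphism of Hopf superalgebras (since $(\bsF^\ell)^*$ is an injective Hopf superalgebra map), and so corresponds to a (necessarily unique) $\rho \in \bfHom(\Mr,G)(A)$ with $\phi = \rho \circ (\bsF^\ell \otimes_k A)$.

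To prove the containment I would repeat the inductive coproduct computations from the proof of Proposition \ref{prop:identifyhom} essentially verbatim, reading $\M_{r+\ell}$ in place of $\Mr$ throughout. As there, $\phi^*(\tau) = \mu \cdot \tau$ for some $\mu \in A$; in the cases $\Mrs$ and $\Mrseta$ one finds $\phi^*(\sigma_1) = a \cdot \sigma_1$ (a scalar multiple of $\sigma_1$ being, modulo reducedness of $A$, the only $p$-nilpotent even primitive of $A[\M_{r+\ell}]$), then inductively $\phi^*(\sigma_i) = a^i \sigma_i$ for $0 \leq i < p^{s-1}$ and $\phi^*(\sigma_{p^{s-1}}) = a^{p^{s-1}}\sigma_{p^{s-1}} + b_s \sigma_1$; and in every case $\phi^*(\theta)$ is forced to be an $A$-linear combination $\sum_{i=0}^{r+\ell-1} c_i \theta^{p^i}$ of the even primitives $\theta, \theta^p, \dots, \theta^{p^{r+\ell-1}} = \sigma_1$ of $A[\M_{r+\ell}]$, plus (only in the $\Mrseta$ case, via Lemma \ref{lem:k[Mrseta]coproduct}) a scalar multiple of $\sigma_{p^s}$. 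Since $\tau$, the $\sigma_i$, and $\sigma_{p^s}$ already lie in $(\bsF^\ell)^*(A[\Mr])$, the only point left to settle is that $c_0 = c_1 = \cdots = c_{\ell-1} = 0$. This is the one genuinely new step. In $A[\M_{r+\ell}]$ one has $\theta^{p^m} = 0$ exactly when $m \geq r+\ell$, so the constraint imposed by the target --- $\phi^*(\theta)^{p^r} = 0$ when $G = \Gar$ (or $\Gar\times\Gam$), and $\phi^*(\theta)^{p^{r-1}} = \phi^*(\sigma_1) = a \cdot \theta^{p^{r+\ell-1}}$ coming from $\theta^{p^{r-1}} = \sigma_1$ in $A[G]$ when $G \in \{\Mrs,\Mrseta\}$ (here the $\sigma_{p^s}$-correction, being $p$-nilpotent, drops out upon raising to the $p^{r-1}$-th power since $r \geq 2$) --- becomes $\sum_{i=0}^{\ell-1} c_i^{p^r} \theta^{p^{i+r}} = 0$, respectively $\sum_{i=0}^{\ell} c_i^{p^{r-1}} \theta^{p^{i+r-1}} = a\,\theta^{p^{r+\ell-1}}$, and the linear independence of the surviving powers of $\theta$ together with reducedness of $A$ forces $c_i = 0$ for $i < \ell$. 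Hence $\phi^*(\theta) \in (\bsF^\ell)^*(A[\Mr])$ as well, completing the proof of the bijection. (Conceptually, $\ker(\bsF^\ell)$ is the $\ell$-th Frobenius kernel $\G_{a(\ell)}$ sitting inside $\M_{r+\ell}$ along the ``$\theta$-axis'', so $\bsF^\ell$ is faithfully flat and the assertion just says that every such $\phi$ restricts trivially to $\G_{a(\ell)} \otimes_k A$; reducedness of $A$ is precisely what forbids a nonzero restriction $\G_{a(\ell)} \otimes_k A \to G \otimes_k A$.)

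For the last assertion, take $A = k$, which is reduced and purely even: the bijection specializes to $\NrG = \bfHom(\Mr,G)(k) \cong \bfHom(\M_{r+\ell},G)(k) = \calN_{r+\ell}(G)$. Since the bijection holds for every reduced purely even $A$ and is realized by composition with the super Frobenius morphism, it identifies the explicit parametrizations of Proposition \ref{prop:identifyhom} for the two source groups; comparing with Definition \ref{definition:kNrG} and Lemma \ref{lemma:kNr}, the coordinate algebras $k[\NrG]$ and $k[\calN_{r+\ell}(G)]$ are then presented by the same polynomial relations, so $\NrG \cong \calN_{r+\ell}(G)$ as affine varieties.

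I expect the main obstacle to be purely one of bookkeeping: faithfully transporting the several-step inductive coproduct arguments of Proposition \ref{prop:identifyhom} into the $\M_{r+\ell}$ setting --- in particular keeping track of which internal $\Z$-degrees occur and which powers of $\theta$ survive in $A[\M_{r+\ell}]$. The conceptual core, that reducedness annihilates the lowest $\ell$ primitive components of $\phi^*(\theta)$, is short.
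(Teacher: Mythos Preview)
Your proof is correct and takes essentially the same approach as the paper's: both rerun the argument of Proposition \ref{prop:identifyhom} with $\M_{r+\ell}$ in place of $\Mr$ and isolate the same new step, namely that only $\theta^{p^\ell},\ldots,\theta^{p^{r+\ell-1}}$ can contribute to the primitive part of $\phi^*(\theta)$. The paper packages this as ``the only $p^r$-nilpotent primitive elements in $A[\M_{r+\ell}]$ lie in the image of $(\bsF^\ell)^*$,'' which is equivalent to your direct computation of $\phi^*(\theta)^{p^r}$ (resp.\ $\phi^*(\theta)^{p^{r-1}}$) combined with reducedness of $A$.
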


\begin{proof}
The coordinate algebra $A[\Mr]$ identifies via the comorphism $(\bsF^\ell)^*: A[\Mr] \rightarrow A[\M_{r+\ell}]$ with a Hopf subsuperalgebra of $A[\M_{r+\ell}]$. Then the proof proceeds by the same reasoning as the proof of Proposition \ref{prop:identifyhom}: One begins by classifying all Hopf superalgebra homomorphisms $\phi: A[G] \rightarrow A[\M_{r+\ell}]$, and then one observes from the classification that the image of each homomorphism is contained in the subalgebra $A[\Mr]$ of $A[\M_{r+\ell}]$, and hence each homomorphism $\phi: A[G] \rightarrow A[\M_{r+\ell}]$ uniquely factors through the comorphism $(\bsF^\ell)^*: A[\Mr] \rightarrow A[\M_{r+\ell}]$. The classification of all Hopf superalgebra homomorphisms $\phi: A[G] \rightarrow A[\M_{r+\ell}]$ follows from essentially a repetition of the argument in the proof Proposition \ref{prop:identifyhom}. The first difference, which is really just an issue of bookkeeping, occurs when considering the $\Z$-degree of elements in $A[\M_{r+\ell}]$, as when analyzing the formulas \eqref{eq:coproductsigmap}, \eqref{eq:pn1coproduct}, \eqref{eq:coproducttheta}, and \eqref{eq:coproductthetas=1}. The $\Z$-grading on $k[\M_{r+\ell}]$ is defined instead by $\deg(\theta) = 2$, $\deg(\sigma_j) = 2j p^{r+\ell-1}$, and $\deg(\tau) = p^{r+\ell}$ (so $(\bsF^\ell)^*$ is a morphism of graded algebras that multiplies degrees by $p^\ell$), but this difference in $\Z$-gradings between $k[\Mr]$ and $k[\M_{r+\ell}]$ does not alter the substance of any of the arguments in the proof of Proposition \ref{prop:identifyhom}. The second difference comes in considering the space of primitive elements in $A[\M_{r+\ell}]$. An element $z \in A[\M_{r+\ell}]$ is primitive if and only if it is an $A$-linear combination of $\theta,\theta^p,\ldots,\theta^{p^{r+\ell-1}}$. Then the only $p$-nilpotent primitive elements in $A[\M_{r+\ell}]$ are multiples of $\theta^{p^{r+\ell-1}} = \sigma_1$, and the only $p^r$-nilpotent primitive elements in $A[\M_{r+\ell}]$ are $A$-linear combinations of $\theta^{p^\ell},\theta^{p^{\ell+1}},\ldots,\theta^{p^{r+\ell-1}}$. Thus when considering the primitive components of $\phi(\sigma_p)$, $\phi(\sigma_{p^{n+1}})$, or $\phi(\theta)$, as in \eqref{eq:coproductsigmap}, \eqref{eq:pn1coproduct}, \eqref{eq:coproducttheta}, and \eqref{eq:coproductthetas=1}, one only gets primitive elements in the image of $(\bsF^\ell)^*$.
\end{proof}

\begin{corollary} \label{corollary:psirheightleqr}
Suppose $k$ is algebraically closed, and let $G$ be an infinitesimal elementary $k$-super\-group scheme of height $\leq r$. Then the algebra homomorphism
	\[
	\psi_r: H(G,k) \rightarrow k[\NrG]
	\]
is injective modulo nilpotents and its image contains all $p^r$-th powers in $k[\calN_r(G)]$.
\end{corollary}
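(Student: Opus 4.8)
The plan is to bootstrap from the height-$r$ case already established in Corollary~\ref{corollary:psirelementary}. By Corollary~\ref{corollary:infinitesimalelementary} the supergroup $G$ is one of $\G_{a(r')}$, $\G_{a(r')} \times \Gam$, $\M_{r';s}$, or $\M_{r';s,\eta}$, so $G$ has a well-defined height $r'$, and the hypothesis forces $r' \le r$. (For instance $\M_{r';s}$ has height $r'$, since $\theta^{p^{r'-1}} = \sigma_1 \ne 0$ but $\theta^{p^{r'}} = \sigma_1^p = 0$ in $k[\M_{r';s}]$.) If $r' = r$ there is nothing to prove, so set $\ell := r - r' \ge 1$. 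The claim that $\im(\psi_r)$ contains every $p^r$-th power in $k[\NrG]$ is exactly Lemma~\ref{lemma:psirontopr} (which is valid for all infinitesimal $G$ of height $\le r$), so the only remaining point is that $\ker(\psi_r)$ is a nilpotent ideal of $H(G,k)$.

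To see this I would first pass along the super Frobenius morphism $\bsF^\ell \colon \M_r \to \M_{r'}$. By Lemma~\ref{lemma:Homr+ell}, with its parameter taken to be $r'$, composition with $\bsF^\ell$ gives a bijective morphism of affine varieties $\calN_{r'}(G) \simrightarrow \NrG$, so $\dim k[\NrG] = \dim \calN_{r'}(G)$. Corollary~\ref{corollary:psirelementary}, applied to the height-$r'$ supergroup $G$, says the induced map $\Psi_{r'} \colon \calN_{r'}(G) \to \abs{G}$ is a homeomorphism, whence $\dim \calN_{r'}(G) = \dim \abs{G} = \dim H(G,k)$. Moreover, by Lemma~\ref{lemma:psirontopr} each $z \in k[\NrG]$ satisfies $T^{p^r} - z^{p^r} = 0$ over the subring $\im(\psi_r)$, so $\im(\psi_r) \subseteq k[\NrG]$ is an integral extension and hence $\dim \im(\psi_r) = \dim k[\NrG]$. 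Putting these together,
	\[
	\dim \im(\psi_r) = \dim H(G,k).
	\]

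Since $\im(\psi_r) \cong H(G,k)/\ker(\psi_r)$, reducing modulo nilpotents gives $\dim\bigl( H(G,k)_{red} / \ol{\ker(\psi_r)}\bigr) = \dim H(G,k)$, where $\ol{\ker(\psi_r)}$ is the image of $\ker(\psi_r)$ in $H(G,k)_{red}$. The final step is to note that in each of the four cases $H(G,k)_{red}$ is a polynomial $k$-algebra: by the cohomology calculations of \cite[Proposition~3.2.1]{Drupieski:2017a}, the nilradical of $H(G,k)$ is generated by the products $\lambda_i\lambda_j$, and the reduced quotient is the polynomial ring on the classes $x_i$, $y$, and (where present) $w_s$ or $w_{s+1}$. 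In particular $H(G,k)_{red}$ is a finitely-generated domain over $k$, and an ideal of such a ring whose quotient has the same Krull dimension as the ring must vanish; hence $\ol{\ker(\psi_r)} = 0$, i.e.\ $\ker(\psi_r) \subseteq \sqrt{0}$. This is exactly the assertion that $\psi_r$ is injective modulo nilpotents, and together with Lemma~\ref{lemma:psirontopr} it proves the corollary; the homeomorphism $\Psi_r \colon \NrG \to \abs{G}$ then follows as in Corollary~\ref{corollary:psirelementary}.

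I expect the main friction to be organizational: keeping straight that Lemma~\ref{lemma:Homr+ell} and Corollary~\ref{corollary:psirelementary} must be invoked with the parameter $r'$, the \emph{height} of $G$, rather than $r$; and verifying that $H(G,k)_{red}$ is genuinely irreducible, not merely equidimensional, since it is irreducibility that lets the dimension count force $\ker(\psi_r)$ into the nilradical rather than merely into some minimal prime. A more computational alternative, closer to Suslin--Friedlander--Bendel, would be to factor the universal homomorphism $u_G$ for $\M_r$ through $\bsF^\ell$ and combine this with the identity $\ve \circ (\bsF^\ell)^* = \ve'$ on $\Hbul(\M_{r'},k)$ to show directly that $\psi_r$ corresponds to $\psi_{r'}$ under the identification $k[\NrG] \cong k[\calN_{r'}(G)]$, so that $\ker(\psi_r)$ is carried onto the nilpotent ideal $\ker(\psi_{r'})$; that route, however, requires upgrading Lemma~\ref{lemma:Homr+ell} to an isomorphism of coordinate rings rather than a bijection of point sets.
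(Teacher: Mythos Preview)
Your argument is correct but follows a genuinely different route from the paper's. The paper proceeds exactly by the ``computational alternative'' you sketch at the end: using Lemma~\ref{lemma:Homr+ell} it identifies $k[\NrG] = k[\calN_{r'}(G)]$ (the bijection there is for \emph{all} reduced purely even $A$, so by Yoneda the reduced coordinate rings coincide---your worry about ``upgrading'' is therefore unfounded), observes that the universal homomorphism $u_G$ for $\Mr$ factors through $\bsF^{\ell}\colon \Mr \to \M_{r'}$, and obtains a commutative diagram whose rows are the composites defining $\psi_{r'}$ and $\psi_r$; both conclusions (nilpotent kernel and surjectivity onto $p^r$-th powers) are then read off directly from Corollary~\ref{corollary:psirelementary} applied at height $r'$.

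Your approach instead separates the two assertions: surjectivity onto $p^r$-th powers comes from Lemma~\ref{lemma:psirontopr}, and the nilpotence of $\ker(\psi_r)$ is obtained by a Krull-dimension count, using that $\im(\psi_r)\subseteq k[\NrG]$ is integral, that $\dim k[\NrG]=\dim\calN_{r'}(G)=\dim\abs{G}$, and that $H(G,k)_{red}$ is an integral domain (so a proper quotient must drop dimension). This works, but it is less structural: it depends on the explicit case-by-case verification that $H(G,k)_{red}$ is a domain (true here, since modulo the $\lambda_i\lambda_j$ one gets a polynomial ring, or a polynomial ring modulo the irreducible relation $x_{r}-y^2$), whereas the paper's diagram chase transfers the result wholesale from the height-$r'$ case without needing to know anything further about the shape of $H(G,k)$. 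On the other hand, your argument makes more visible the role of Lemma~\ref{lemma:psirontopr}, which the paper does not invoke here at all.
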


\begin{proof}
Suppose $G$ is infinitesimal of height $r' \leq r$. By Lemma \ref{lemma:Homr+ell}, the super Frobenius morphism defines an identification of coordinate algebras $k[\NrG] = k[\calN_{r'}(G)]$. Lemma \ref{lemma:Homr+ell} also implies that the universal supergroup homomorphism from $\Mr$ to $G$ factors through the super Frobenius morphism $\Mr \otimes_k k[\NrG] \rightarrow \M_{r'} \otimes_k k[\NrG]$. Then there exists a commutative diagram
\[
\xymatrix@C=1em{
H(G,k) \ar@{->}[r] \ar@{=}[d] & \Hbul(G \otimes_k k[\calN_{r'}(G)],k[\calN_{r'}(G)]) \ar@{->}[r] \ar@{=}[d] & \Hbul(\M_{r'} \otimes_k k[\calN_{r'}(G)],k[\calN_{r'}(G)]) \ar@{->}[r] \ar@{->}[d] & k[\calN_{r'}(G)] \ar@{=}[d] \\
H(G,k) \ar@{->}[r] & \Hbul(G \otimes_k k[\NrG],k[\NrG]) \ar@{->}[r] & \Hbul(\Mr \otimes_k k[\NrG],k[\NrG]) \ar@{->}[r] & k[\NrG]
}
\]
in which the rows are the composites defining $\psi_{r'}$ and $\psi_r$, respectively, and the third vertical arrow is the map in cohomology induced by the super Frobenius morphism. The top row of the diagram is injective modulo nilpotents and surjective onto $p^r$-th powers, by Corollary \ref{corollary:psirelementary}, so the commutativity of the diagram implies the corresponding conclusion for the bottom row.
\end{proof}

Now we explain how the previous results, when combined with the BIKP detection theorem, should imply the validity of Conjecture \ref{conjecture:spectrum}.

\begin{proof}[Justification for Conjecture \ref{conjecture:spectrum}]
Suppose $k$ is algebraically closed, and let $G$ be an infinitesimal unipotent $k$-supergroup scheme of height $\leq r$. We have already shown in Lemma \ref{lemma:psirontopr} that the image of $\psi_r$ contains the $p^r$-th power of each element of $k[\NrG]$, so let $z \in H(G,k)$ be a homogeneous element, and suppose $\psi_r(z) = 0$. Then for each (algebraically closed) field extension $K/k$ and each (infinitesimal) elementary subsupergroup scheme $E$ of $G \otimes_k K$, there exists by Proposition \ref{prop:psirgradedmap} a commutative diagram
\[
\xymatrix@C+1.5em{
H(G \otimes_k K,K) \ar@{->}[r]^{\psi_r \otimes_k K} \ar@{->}[d]^{\iota^*} & K[\NrG] \ar@{->}[d]^{\iota^*} \\
H(E,K) \ar@{->}[r]^{\psi_r} & K[\Nr(E)].
}
\]
Here $\iota: E \hookrightarrow G \otimes_k K$ denotes the closed embedding of $E$ into $G \otimes_k K$. Since $\psi_r(z) = 0$, the commutativity of the diagram implies that $(\psi_r \circ \iota^*)(z_K) = 0$. But the bottom arrow of the diagram is injective modulo nilpotents by Corollary \ref{corollary:psirheightleqr}, so $\iota^*(z_K)$ must be nilpotent in $H(E,K)$. Since $K$ and $E$ were arbitrary, this implies by the BIKP detection theorem that $z$ is nilpotent.
\end{proof}

\begin{conjecture} \label{conjecture:supportheightone}
Suppose $k$ is algebraically closed. Let $G$ be a height-$1$ infinitesimal unipotent $k$-super\-group scheme, and let $M$ be a finite-dimensional rational $G$-supermodule. Then under the homeomorphism $\Psi = \Psi_1: \NoneG \rightarrow \abs{G}$ of Conjecture \ref{conjecture:spectrum},
	\[
	\Psi^{-1}(\abs{G}_M) = \NoneG_M = \set{ \phi \in \NoneG : \id_{\Mone}(\phi^*M) = \infty}.
	\]
\end{conjecture}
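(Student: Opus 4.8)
The plan is to deduce the general statement from the already‑established elementary case (Theorem~\ref{theorem:supportheightone}) by factoring each homomorphism $\phi\colon\Mone\to G$ through its image, and to use the BIKP detection theorem (Theorem~\ref{theorem:BIKP}) to transport support‑variety information from those images back to $G$. As in the justification of Conjecture~\ref{conjecture:spectrum}, the detection theorem is the only nonelementary input; I will indicate where the extra hypotheses ($\NoneG_M$ Zariski closed, and the technical condition on its defining ideal) are needed.

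First I would record the inclusion $\NoneG_M\subseteq\Psi^{-1}(\abs{G}_M)$, which needs no detection theory. Set $\Lambda=\Hom_k(M,M)$ and $M'=\phi^*M$. Since $\Mone$ is unipotent its only irreducible supermodule up to parity change is $k$, so Lemmas~\ref{lemma:idequivalent} and~\ref{lemma:idEndV} give that $\phi\in\NoneG_M$ if and only if $\Hbul(\Mone,\phi^*\Lambda)\cong\Ext_{\Mone}^\bullet(M',M')$ is infinite‑dimensional; by the finite generation of $\Hbul(\Mone,\phi^*\Lambda)$ over $\Hbul(\Mone,k)=k[y]$ (Lemma~\ref{lemma:Mrfg}) this is equivalent to $1_{M'}\cdot y^n\neq0$ for all $n\geq1$. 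On the other hand \eqref{eq:uG*r=1} gives $\phi^*(z)=\psi_1(z)(\phi)\cdot y^n$ for $z\in H(G,k)$ homogeneous of degree $n$, and naturality of the unit map $k\to\Lambda$ shows that $z\in I_M$ forces $\psi_1(z)(\phi)\cdot(1_{M'}\cdot y^n)=0$ in $\Hbul(\Mone,\phi^*\Lambda)$. Hence if $\phi\in\NoneG_M$ then $\phi$ annihilates $\psi_1(I_M)$, i.e.\ $\Psi(\phi)\in\abs{G}_M$.

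For the reverse inclusion I would reduce to elementary subsupergroup schemes. Every $\phi\colon\Mone\to G$ factors through $\Mones$ for $s\gg0$, hence through its scheme‑theoretic image $E:=\im(\phi)$; as $E$ is a quotient of $\Mones$ and a closed subsupergroup scheme of $G$, it is an infinitesimal elementary subsupergroup scheme of height at most one, and $\phi=\iota_E\circ\ol{\phi}$ with $\ol{\phi}\colon\Mone\twoheadrightarrow E$ and $\iota_E\colon E\hookrightarrow G$ a closed embedding. Since $\phi^*M=\ol{\phi}^*(\iota_E^*M)$, the definition of the support set gives $\phi\in\NoneG_M$ if and only if $\ol{\phi}\in\None(E)_{\iota_E^*M}=\Psi_E^{-1}(\abs{E}_{\iota_E^*M})$, the last equality by Theorem~\ref{theorem:supportheightone} applied to $E$; and naturality of $\psi_1$ and of the homomorphism varieties (Proposition~\ref{prop:psirgradedmap}, Lemma~\ref{lemma:NrGnatural}) gives $\Psi(\phi)=(\iota_E)_*(\Psi_E(\ol{\phi}))$, where $(\iota_E)_*$ denotes the maps induced by $\iota_E$ on homomorphism varieties and on cohomological spectra. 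Letting $E$ range over the infinitesimal elementary subsupergroup schemes of $G$ of height at most one — equivalently, over the images $\im(\phi)$ of homomorphisms $\Mone\to G$ (cf.\ Lemma~\ref{lemma:Mrquotient}) — one obtains
	\[
	\NoneG_M=\bigcup_E(\iota_E)_*\bigl(\None(E)_{\iota_E^*M}\bigr)
	\quad\text{and}\quad
	\Psi(\NoneG_M)=\bigcup_E(\iota_E)_*\bigl(\abs{E}_{\iota_E^*M}\bigr),
	\]
using for the second display that each $\Psi_E$ is surjective. Since $(\iota_E)_*(\abs{E}_{\iota_E^*M})\subseteq\abs{G}_M$ by restriction of cohomology, this already gives $\Psi(\NoneG_M)\subseteq\abs{G}_M$; what remains is the reverse containment $\abs{G}_M\subseteq\bigcup_E(\iota_E)_*(\abs{E}_{\iota_E^*M})$ — a Quillen‑type stratification of $\abs{G}_M$ by its restrictions to height‑one elementary subsupergroup schemes.

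The main obstacle is exactly this stratification, and it is here that the projectivity half of the BIKP detection theorem (Theorem~\ref{theorem:BIKP}\eqref{item:detectprojectivity}) enters, via the Carlson $L_\zeta$‑module argument already used by Suslin, Friedlander, and Bendel. One shows that a point of $\abs{G}_M$ lying outside every $(\iota_E)_*(\abs{E}_{\iota_E^*M})$ would produce, for suitable homogeneous $\zeta_1,\dots,\zeta_c\in H(G,k)$, a $G$‑supermodule $M\otimes L_{\zeta_1}\otimes\cdots\otimes L_{\zeta_c}$ whose restriction to every elementary subsupergroup scheme of every $G\otimes_k K$ has trivial support, hence is projective, hence (by Theorem~\ref{theorem:BIKP}\eqref{item:detectprojectivity}) is projective over $G$ — yet whose own support still contained the chosen point, a contradiction. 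It is in choosing the $\zeta_i$ and in computing the support of the resulting module that the extra hypotheses are required: one needs $\NoneG_M$ (equivalently $\abs{G}_M$) to be Zariski closed so that finitely many $\zeta_i$ suffice, and one needs the technical condition on the ideal defining $\NoneG_M$ to guarantee that the $\zeta_i$, or appropriate powers of them, can be taken in the image of $\psi_1$, so that their restrictions kill the supports $\abs{E}_{\iota_E^*M}$. Granting the stratification one gets $\Psi(\NoneG_M)=\abs{G}_M$, and since $\Psi$ is a bijection by Conjecture~\ref{conjecture:spectrum}, the desired equality $\NoneG_M=\Psi^{-1}(\abs{G}_M)$ follows.
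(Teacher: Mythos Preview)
Your first paragraph and the inclusion $\NoneG_M\subseteq\Psi^{-1}(\abs{G}_M)$ are correct and match the paper: both arguments rest on \eqref{eq:uG*r=1}, Lemma~\ref{lemma:Mrfg}, and naturality of $\rho_\Lambda$, and the paper phrases the same computation contrapositively as ``$\psi(z)\notin\calI_M\Rightarrow z\notin\sqrt{I_M}$''.

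For the reverse inclusion your route diverges substantially from the paper's, and the sketch has real gaps. The paper does \emph{not} use projectivity detection or Carlson modules; it works entirely at the level of the ideal equation $\psi^{-1}(\calI_M)=\sqrt{I_M}$ and invokes a strengthened nilpotency statement (labelled (BIKP~$1'$)) for cohomology with coefficients in $\Lambda=\Hom_k(M,M)$: if $\psi(z)\in\calI_M$, the field-extension compatibility hypothesis pushes $\psi(z)$ into $\calI(G\otimes_kK)_{M_K}$, naturality then into $\calI(E)_{\nu^*M_K}$, Theorem~\ref{theorem:supportheightone} (for $E$) shows $\rho_{\nu^*\Lambda_K}(\nu^*(z_K))$ is nilpotent, and (BIKP~$1'$) forces $\rho_\Lambda(z)$ nilpotent. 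Three specific problems with your approach:

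\begin{enumerate}
\item \textbf{Misidentified hypothesis.} You say the ``technical condition on the ideal'' is needed so that the $\zeta_i$ lie in $\im(\psi_1)$. That is automatic: Lemma~\ref{lemma:psirontopr} already gives $p$\nobreakdash-th powers in the image. The actual hypothesis is that $\calI(G)_M$ is compatible with base change to $K$; in the paper this is exactly what transports information from $k$\nobreakdash-rational elementary subgroups to all $E\subset G\otimes_kK$. Your sketch never uses it for that purpose.
\item \textbf{Field extensions.} Your stratification $\NoneG_M=\bigcup_E(\iota_E)_*\bigl(\None(E)_{\iota_E^*M}\bigr)$ runs only over $k$\nobreakdash-rational elementary $E\subset G$, yet Theorem~\ref{theorem:BIKP}\eqref{item:detectprojectivity} requires projectivity of the restriction to every elementary $E'\subset G\otimes_kK$ for every $K$. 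You assert this without explaining why $\zeta_i$ chosen to kill the $k$\nobreakdash-rational supports should kill the $K$\nobreakdash-rational ones.
\item \textbf{Circularity in the $L_\zeta$ step.} The claim that the support of $M\otimes L_{\zeta_1}\otimes\cdots\otimes L_{\zeta_c}$ ``still contains the chosen point'' needs the inclusion $\abs{G}_M\cap Z(\zeta_1)\cap\cdots\cap Z(\zeta_c)\subseteq\abs{G}_{M\otimes L_{\zeta_1}\otimes\cdots}$, which in the Suslin--Friedlander--Bendel framework is itself a consequence of the rank-variety description one is trying to prove. The easy inclusion goes the other way.
\end{enumerate}

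In short, the paper's argument is a direct two-line reduction to (BIKP~$1'$) plus the base-change hypothesis, whereas your proposed $L_\zeta$/projectivity route would need substantially more input to close.
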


To justify this conjecture we must assume a stronger version of Theorem \ref{theorem:BIKP}\eqref{item:detectnilpotents}:
	\begin{itemize}
	\item[(BIKP $1'$)] Let $\Lambda$ be a unital associative rational $G$-algebra, and let $z \in \Hbul(G,\Lambda)$. Then $z$ is nilpotent if and only if for every field extension $K$ of $k$, and every elementary subsupergroup scheme $E$ of $G \otimes_k K$, the restriction of $z_K \in \Hbul(G \otimes_k K,\Lambda_K)$ to $\Hbul(E,\Lambda_K)$ is nilpotent.
	\end{itemize}
We also need to assume an affirmative answer to Question \ref{question:supportsetclosed}, i.e., we must assume that $\NoneG_M$ is a Zariski closed subset of $\NoneG$, and we need to assume that the ideal of functions $\calI_M = \calI(G)_M \subset k[\NoneG]$ vanishing on $\NoneG_M$ is compatible with field extensions. Specifically, let $K$ be an algebraically closed field extension of $k$. Then $M_K := M \otimes_k K$ is a rational $G \otimes_k K$-super\-module, and one can consider the set
	\[
	\None(G \otimes_k K)_{M_K} = \set{ \phi \in \bfHom(\Mone \otimes_k K,G \otimes_k K)(K): \id_{\Mone \otimes_k K}(\phi^*M_K) = \infty}.
	\]
Then we want to assume that the image of $\calI(G)_M$ in $K[\None(G \otimes_k K)]$ under base change is contained in the ideal of functions $\calI(G \otimes_k K)_{M_K} \subset K[\None(G \otimes_k K)]$ vanishing on $\None(G \otimes_k K)_{M_K}$.

\begin{proof}[Justification (modulo assumptions) for Conjecture \ref{conjecture:supportheightone}]
Given the preceding assumptions, set $\Lambda = \Hom_k(M,M)$, and let $I_M = \ker(\rho_\Lambda)$ be the defining ideal of $\abs{G}_M$. By the assumption that $\NoneG_M$ is closed in $\NoneG$, the ideal $\calI_M$ is a radical ideal and is the kernel of the canonical quotient map $k[\NoneG] \twoheadrightarrow k[\NoneG_M]$. Now to prove Conjecture \ref{conjecture:supportheightone}, it suffices as in the proof of \cite[Corollary 6.3.1]{Suslin:1997a} to show that the homomorphism $\psi = \psi_1 : H(G,k) \rightarrow k[\NoneG]$ satisfies
	\begin{equation} \label{eq:psiinverse}
	\psi^{-1}(\calI_M) = \sqrt{I_M}.
	\end{equation}
It follows from Theorem \ref{theorem:supportheightone} that \eqref{eq:psiinverse} is true in the special case that $G$ is a height-one infinitesimal elementary supergroup scheme.

Let $\phi \in \NoneG = \bfHom(\Mone,G)(k)$, and let $z \in H(G,k)$ be homogeneous of degree $n$. Then by the definition of $\psi$, $\phi^*(z) = \psi(z)(\phi) \cdot y^n \in H(\Mone,k)$; cf.\ \eqref{eq:uG*r=1}. Now suppose $\phi \in \NoneG_M$ but that $\psi(z)(\phi) \neq 0$, so that $\psi(z) \notin \calI_M$. Then $\phi^*(z)$ is a nonzero multiple of $y^n$ in $H(\Mone,k)$. Since $\phi \in \NoneG_M$, then $\opH^i(\Mone,\phi^*\Lambda) \neq 0$ for infinitely many $i \geq 1$. But $\Hbul(\Mone,\phi^*\Lambda)$ is finite over $H(\Mone,k) \cong k[y]$ by Lemma \ref{lemma:Mrfg}, so this implies that $\rho_{\phi^* \Lambda}(y) \in H(\Mone,\phi^* \Lambda)$ is not nilpotent, and hence $\rho_{\phi^*\Lambda}(\phi^*(z))$ is not nilpotent. Since $\rho_{\phi^*\Lambda}(\phi^*(z)) = \phi^*(\rho_\Lambda(z))$, we deduce that $\rho_\Lambda(z)$ is not nilpotent, and hence $z \notin \sqrt{I_M}$. Thus, if $\psi(z) \notin \calI_M$, then $z \notin \sqrt{I_M}$.

For the converse, suppose $\psi(z) \in \calI_M$. Let $K/k$ be an extension of (algebraically closed) fields, and let $E$ be an elementary (height-one infinitesimal) subsupergroup scheme of $G \otimes_k K$. Write $\nu: E \hookrightarrow G \otimes_k K$ for the inclusion of $E$ into $G \otimes_k K$. Then we want to show that the restricted cohomology class $\nu^*(\rho_{\Lambda_K}(z_K)) = \rho_{\nu^* \Lambda_K}(\nu^* (z_K)) \in \Hbul(E,\nu^* \Lambda_K)$ is nilpotent. Write $\psi(z)_K$ for the image of $\psi(z)$ under the base change map $k[\NoneG] \rightarrow K[\None(G \otimes_k K)]$ of Remark \ref{remark:kNrG}\eqref{item:basechange}. By the assumption that the ideal $\calI_M = \calI(G)_M$ is compatible with field extensions, we get $\psi(z)_K \in \calI(G \otimes_k K)_{M_K}$. Next, it is evident from the definitions that composition with $\nu$ maps $\None(E)_{\nu^* M_K}$ into the support set $\None(G \otimes_k K)_{M_K}$, and hence the restriction map $\nu^*: K[\None(G \otimes_k K)] \rightarrow K[\None(E)]$ maps $\calI(G \otimes_k K)_{M_K}$ into the ideal $\calI(E)_{\nu^* M_K}$. Then by the naturality of $\psi$,
	\[
	\psi(\nu^*(z_K)) = \nu^*(\psi(z_K)) = \nu^*(\psi(z)_K) \in \calI(E)_{\nu^* M_K}.
	\]
Since \eqref{eq:psiinverse} is true for height-one infinitesimal elementary supergroup schemes, this means that the cohomology class $\rho_{\nu^* \Lambda_K}(\nu^*(z_K)) \in \Hbul(E,\nu^* \Lambda_K)$ is nilpotent. Thus, since $K$ and $E$ were arbitrary, we deduce by (BIKP $1'$) that if $\psi(z) \in \calI_M$, then $z \in \sqrt{I_M}$.
\end{proof}



\makeatletter
\renewcommand*{\@biblabel}[1]{\hfill#1.}
\makeatother

\bibliographystyle{eprintamsplain}
\bibliography{infinitesimal-unipotent-supergroups}

\end{document}